\newtheorem{teo}{Theorem}[section]
\newtheorem{prop}[teo]{Proposition}
\newtheorem{lem}[teo]{Lemma}
\newtheorem{cor}[teo]{Corollary}
\newtheorem{conj}[teo]{Conjecture}
\newtheorem{defini}[teo]{Definition}
\newtheorem{rem}[teo]{Remark}
\newcommand{\End}{{\rm End}}
\newcommand{\GSp}{{\rm GSp}}
\newcommand{\GL}{{\rm GL}}
\newcommand{\Sh}{{\rm Sh}}
\newcommand{\Gal}{{\rm Gal}}
\newcommand{\Res}{{\rm Res}}
\newcommand{\der}{{\rm der}}
\newcommand{\ab}{{\rm ab}}
\newcommand{\ad}{{\rm ad}}
\newcommand{\CC}{{\mathbb C}}
\newcommand{\RR}{{\mathbb R}}
\newcommand{\ZZ}{{\mathbb Z}}
\newcommand{\QQ}{{\mathbb Q}}
\newcommand{\NN}{{\mathbb N}}
\newcommand{\HH}{{\mathbb H}}
\newcommand{\GG}{{\mathbb G}}
\newcommand{\SSS}{{\mathbb S}}
\newcommand{\AAA}{{\mathbb A}}
\newcommand{\UUU}{{\mathbb U}}
\newcommand{\lto}{\longrightarrow}
\newcommand{\cA}{{\mathcal A}}
\newcommand{\cL}{{\mathcal L}}
\newcommand{\ol}{\overline}
\newcommand{\oL}{\overline{L}}
\newtheorem{defi}{Definition}[section]
\newcommand{\wt}{\widetilde}
\title{Mumford-Tate and generalised Shafarevich conjectures.}
\author{Emmanuel Ullmo}
\address{Universit\'e de Paris-Sud, Bat 425 et Institut Universitaire de France, 91405, Orsay
Cedex France}
\email{ullmo@math.u-psud.fr}
\author{Andrei Yafaev}
\address{University
College London, Department of Mathematics, 25 Gordon street, WC1H
OAH, London, United Kingdom}
\email{yafaev@math.ucl.ac.uk}
\thanks{The research of the second author was supported by the Leverhulme Trust}
\begin{document}
\maketitle
\begin{abstract}
Dans cet article on formule les conjectures d'Isogenie, de Mumford-Tate et de Shafarevich
g\'en\'eralis\'ees et on d\'emontre qu'elles sont \'equivalentes.
\end{abstract}
\begin{abstract}
In this paper we formulate generalised Isogeny, Mumford-Tate and Shafarevich conjectures
and prove that they are equivalent.
\end{abstract}
\tableofcontents

\section{Introduction.}

Let $A$ be an abelian variety of dimension $g$ over a number field $L$.
For any prime number $l$, let $T_l(A)$ be the projective limit of
$A[l^n](\ol L)$, the $l^n$ torsion of $A$.
It is a free $\ZZ_l$-module of rank $2g$, classically called the \emph{Tate module} of $A$. 
We denote by $V_l(A)$
the $\QQ_l$-vector space $\QQ_l \otimes T_l(A)$.
The Tate module $T_l(A)$ has a continuous action by the Galois group $\Gal(\ol L/L)$.
In his landmark paper \cite{Fa}, Faltings proves the following statements:
\begin{enumerate}
\item The action of $\Gal(\ol L/L)$ on $V_l(A)$ is semi-simple.
\item The natural map
$$
\End_L(A)\otimes \ZZ_{l}\lto \End_{\Gal(\ol L/L)}(T_l(A))
$$
is an isomorphism.
\item There are only finitely many isomorphism classes of
abelian varieties defined over $L$ in the isogeny class of $A$.
\end{enumerate}
The second statement is known as the Tate isogeny conjecture (for abelian varieties). Tate proved these 
statements for abelian varieties over finite fields (see \cite{Tate}) and Faltings proved them
for abelian varieties over number fields using completely different techniques.
The second statement is a very special case of the general Tate conjecture on algebraic cycles on 
abelian varieties (see \cite{Tate2}, also \cite{Milne2}). 

The last statement is a consequence
of the  Shafarevich conjecture and constitutes an important step of its proof by Faltings.
We will therefore call this last statement the Shafarevich conjecture as well.
The classical statement of the Shafarevich conjecture
is that for a given number field $L$, integers $g$ and $d$ and a
finite set of places $S$ of $L$, there are only finitely many isomorphism classes of
abelian varieties over $L$ of dimension $g$ with a polarisation of degree $d^2$  
having good reduction outside  $S$. The statement 3 above is a direct
consequence of this.  

The action of $\Gal(\ol L/L)$ on $V_l(A)$ gives rise to a representation
$$
\rho_{A,l} \colon \Gal(\ol L/L) \lto \GL(V_l(A)).
$$

The $l$-adic monodromy group associated to $\rho_{A,l}$ is defined as the
Zariski closure $H_{l}$ of $\rho_{A,l} (\Gal(\ol L/L))$ in $\GL(V_{l}(A))$.
Let $M$ be the Mumford-Tate group of $A$.
It is known that the connected component $H_{l}^0$ 
of $H_{l}$ is independent of the field of definition $L$ of $A$ (see \cite{Se}) and  is contained in $M_{\QQ_l}$
 (\cite{Deligne},I, Prop. 6.2 and \cite{Borovoi}) .
The Mumford-Tate conjecture asserts that $H_{l}^0=M_{\QQ_l}$.
This conjecture is in general open. There are some easy implications between these conjectures.
Tate's method (see \cite{Tate}) shows that the Shafarevich conjecture implies the Tate isogeny conjecture. It is easy to see that
the Mumford-Tate conjecture implies the Tate isogeny conjecture. 
An attempt to prove the reverse inclusion leads to an idea for generalisation of the Tate isogeny conjecture
as explained below.
In this paper we formulate suitable generalisations of the  Mumford-Tate, Tate isogeny and Shafarevich conjectures and show that the three conjectures are equivalent.

We start by constructing a Galois representation attached to a $\ol\QQ$-valued point on a Shimura variety
\emph{ and } a representation of the group intervening in the definition of the Shimura variety 
in question. Our construction is to be compared to 
that of Shimura (see \cite{Shimura}) and Borovoi (see \cite{Borovoi}). 

When the Shimura
variety is the moduli space of principally polarised abelian varieties of dimension $g$
and the representation is the standard representation of the symplectic group,
our definition gives the $l$-adic representations on the Tate module
of an abelian variety discussed previously.

We then  formulate the \emph{generalised} Tate isogeny conjecture (that we simply call Isogeny conjecture),  
and \emph{generalised}
the Shafarevich conjecture in this context.
We show that they are both equivalent to the \emph{generalised} Mumford-Tate conjecture.
For Shimura varieties of Hodge type the generalised Mumford-Tate conjecture
is just the usual one for abelian varieties. 

The idea of the generalisation is to make the conjectures independent of 
the choice of the representation of the group defining the Shimura variety.
Tate isogeny conjecture is just our Isogeny conjecture for the standard symplectic representation. 
Known
cases of the Mumford-Tate conjecture provide some cases of these generalised
conjectures. 
We hope that our approach yields a new perspective on these conjectures. 

In particular, the Isogeny and Mumford-Tate conjectures are of `motivic' nature while the generalised Shafarevich conjecture 
is of arithmetic nature and we hope that the methods of Faltings may be adapted and yield an approach to this conjecture.

The implication `the generalised Shafarevich implies the Isogeny conjecture' is classical and is 
an adaptation of the arguments of Tate (\cite{Tate}). The implication `generalised Mumford-Tate implies Isogeny conjecture' is straightforward. The converse is easy but really requires the independence of the choice of the representation.
The implication `the generalised Mumford-Tate implies the generalised Shafarevich' is intuitive. Mumford-Tate 
conjecture states loosely speaking that the image of the Galois group is `big' and hence the fields of definition of points
lying in a generalised Hecke orbit should grow. However, technically this is the hardest part 
of the paper and requires a difficult result in group/measure theory which, we believe, is of independent interest.
 
As a sub-product of our construction of Galois representations we recover some previously known 
results on Galois representations attached to abelian varieties such as the fact
the image of $\rho_{A,l}$ is contained in $M(\QQ_l)$ (with the above notation)
and that the centre of $M$ is contained in the centre of the $l$-adic monodromy group
$H_{l}$ and coincides with it for Shimura varieties of Hodge type. Notice that 
classically such results are derived from hard theorems such as Deligne's theorem on absolute
Hodge cycles (see \cite{Deligne}) while in this paper we show that they follow from our
construction in a straightforward way.

\subsection{Notations and conventions.}

The algebraic closure of a field $k$ is denoted $\overline{k}$.

Number fields are always assumed to be given as
subfields of $\CC$. With this convention, if $Z$ is a an 
algebraic variety over a number field $L$ then the set
$Z(L)$ of $L$-rational points of $Z$ is a subset of the set of
the complex points $Z(\CC)$ of $Z$.

For a linear algebraic group $G$, $G^0$ denotes the identity component
of  $G$ for the Zariski topology and  $G^{ad}$ (adjoint group),
$G^{ab}$ (maximal abelian quotient) and $G^{der}$ (derived group)
have the usual meaning. The centre of $G$ is denoted by $Z_{G}$.
If $H$ is an algebraic subgroup of $G$ then $N_{G}(H)$
and $Z_{G}(H)$ are respectively the normaliser and the centraliser of $H$
in $G$. Reductive groups are assumed to be connected.

Let $V$ be a $k$-vector space and $A$ be a subset of $\End(V)$, then
$\End_{A}(V)$ denotes the endomorphism algebra of $V$ commuting
with the elements of $A$.

We write  $\AAA$ (resp. $\AAA_{f}$) for the ring  of ad\`eles (resp. finite ad\`eles) of $\QQ$.
A superscript $^l$ denotes a structure ``away from $l$''; a subscript
$_{l}$ denotes a structure ``at $l$''. Let $G$ be an algebraic group over $\QQ$,
we refer to \cite{Pi2} 0.6 for the definition
of a neat subgroup of $G(\AAA_{f})$.

Let $\SSS:=\Res_{\CC/\RR}\GG_{m,\CC}$ be the Deligne torus.
By a Shimura datum $(G,X)$ we mean a reductive group over $\QQ$
together with the $G(\RR)$-conjugacy class of a morphism 
$s_{0}:\SSS \rightarrow G_{\RR}$ such that $(G,X)$ satisfies the conditions (2.1.1.1), (2.1.1.2)
and (2.1.1.3) of Deligne \cite{De2}.  The Mumford-Tate group of $MT(s)$ of $s\in X$ is defined as the 
smallest $\QQ$-subgroup of $G$ such that $s:\SSS\rightarrow G_{\RR}$ factors through
$MT(s)_{\RR}$. The generic Mumford-Tate group $MT(X)$ of $X$ is defined
as the smallest $\QQ$-subgroup of $G$ containing all the $MT(s)$ for $s\in X$.
We will only consider Shimura data $(G,X)$ such  that  $G=MT(X)$; see \cite{UlYa} lemma 2.1. for the fact that
imposing this condition does not cause any loss of generality. Under this assumption the conditions
(2.1.1.4) and (2.1.1.5) of Deligne \cite{De2} are also satisfied; see the appendix to this paper.

If $K$ is a compact open subgroup of $G(\AAA_{f})$ then $ \Sh_{K}(G,X)$
is the canonical model of the Shimura variety over the reflex field $E(G,X)$
of $(G,X)$. Then 
$$
\Sh_{K}(G,X)(\CC)=G(\QQ)\backslash X\times G(\AAA_{f})/K.
$$
Let  $\Sh(G,X)$ be the projective limit of the $\Sh_{K}(G,X)$.
Then $\Sh(G,X)$ is defined over $E(G,X)$ and with our conventions (see \cite{De2} cor. 2.1.11
or \cite{Milne} thm. 5.28)
$$
\Sh(G,X)(\CC)=G(\QQ)\backslash X\times G(\AAA_{f}).
$$
We will write $[s,\ol{g}]$ for the point of $\Sh_K(G,X)(\CC)$ which is the
image of the element $(s,g)$ of $X\times G(\AAA_f)$.
We will write $[s,g]$ for a point of $\Sh(G,X)(\CC)$ which is the 
image of $(s,g)$. A point $[s,\ol{g}]$ of $\Sh_K(G,X)(\CC)$
is said to be Hodge generic if $MT(s)=G$.

\subsection{Contents of the paper}
In section two we consider a Shimura datum $(G,X)$ and  $K$  a neat compact open subgroup of 
$G(\AAA_f)$ which is a product $K=\prod_l K_l$ over primes $l$.
Let 
$\Sh_K(G,X)$ the corresponding Shimura variety over $E(G,X)$.
Let $L$ be a number field containing $E(G,X)$  and 
$x = [s,\ol{1}] \in \Sh_K(G,X)(L)\subset \Sh_K(G,X)(\CC)$.

We let $\wt{x}$ be the point $[s,1]$ of  $\Sh(G,X)$.
  We show that these choices determine a Galois representation
$$
\rho_{\wt{x}}\colon \Gal(\ol L/L) \lto K
$$
and for any prime $l$ we obtain a local representation $\rho_{\wt{x}, l}$
by projecting on the $l$-th component.
We show that in the case where the Shimura variety is 
the moduli space for principally polarised abelian varieties (with level structure),
the representations we obtain are the ones naturally associated to abelian varieties.

The point $s \colon \SSS \lto G_\RR$  of $X$ determines the 
Mumford-Tate group $M:=MT(s)$ of $s$. 
We show (prop. \ref{prop1}) quite directly that 
--at least for $L$ big enough-- 
$\rho_{\wt{x}}(\Gal(\ol L/L))$ is contained in $M(\AAA_f)$ (prop. \ref{prop1}).
Usually, for abelian varieties, this statement is derived from Deligne's result on absolute Hodge cycles.
If $H_{l}$ is the associated $l$-adic monodromy-group, it's a simple consequence
of the theory of complex multiplication (see remark \ref{remCM}) and some functoriality
properties of our definition of Galois representations 
 that $Z_{M}^0\otimes \QQ_{l}\subset H_{l}^0$. 
 We don't know a precise reference for this result, even in the case of abelian varieties,
 but this was  probably known to experts and some related statements are obtained by
 Serre\cite{Se}, Chi \cite{Ch} and Pink \cite{Pi}.

In section $3$, we show that given two points $x=[s, \ol{1}]$ and $y = [s, \ol{g}]$ lying in one Hecke 
orbit, the corresponding Galois representation are conjugate by $g$ \emph{up to multiplication}
by an element $z$ of uniformly bounded  order of $Z_{M}(\QQ)$ (see thm. \ref{teo1}). We think that $z$ might be in fact trivial but we have  not been  able to prove this.

In  section $4$, we define the notion of $(\infty, l)$-integral structure as follows.
Let $V_\QQ$ be a $\QQ$ vector space and $L$ a number field.
An $(\infty, l)$-integral structure $(V_{\ZZ},s,\rho)$ on $V_{\QQ}$ is the following data
\begin{enumerate}
\item A lattice $V_{\ZZ}$ in $V_{\QQ}$.
\item A polarised $\ZZ$-Hodge structure ($\ZZ$-PHS) $s \colon \SSS \lto \GL(V_{\RR})$.
\item An $l$-adic Galois representation $\rho \colon \Gal(\ol L/L) : \lto \GL(V_{\ZZ_{l}})$.
\end{enumerate}

Furthermore, we require the following compatibility relation
$$
\End_{\ZZ-HS}(V_{\ZZ})\otimes \ZZ_{l} \subset \End_{\Gal(\ol L/L)}(V_{\ZZ_{l}}).
$$
A $(\infty,l)$-integral structure  $(V_{\ZZ},s,\rho)$ is called Tate if  
 the representation $\rho \otimes \QQ_l$ is semi-simple and
$$
\End_{\ZZ-HS}(V_{\ZZ})\otimes \ZZ_{l} = \End_{\Gal(\ol L/L)}(V_{\ZZ_{l}})
$$
One defines a $(\infty, l)$-rational structure as above afer tensoring everything
with $\QQ$. Morphisms of $(\infty, l)$-rational structures are defined in an obvious way. 

Such a data is naturally attached to an abelian variety and more generally to a 
point $x=[s,\ol{1}]$ in $\Sh_K(G,X)(L)$ as above, \emph{and} a faithful representation 
$G\hookrightarrow \GL_{n}$. We call such a $(\infty, l)$-integral structure of Shimura type.
The generalised Mumford-Tate conjecture  asserts that for a point $x=[s,\ol{1}]$ as above,
the image of $\rho_{\wt{x},l}$ is open in $M(\QQ_l)$ where $M$ is the Mumford-Tate 
group of $s$. In the case of an $(\infty, l)$-integral structure attached to 
an abelian variety or to a point in a  Shimura variety of Hodge type, our generalised Mumford-Tate conjecture is just the usual Mumford-Tate conjecture.
   
The Isogeny conjecture asserts that any $(\infty, l)$-integral structure of Shimura type
is Tate. This conjecture is really more general than the usual Tate isogeny conjecture, proved by Faltings 
for abelian varieties.
Indeed, Faltings proved the Isogeny conjecture for $(\infty, l)$-integral structures
associated to points on moduli space of abelian varieties \emph{with respect to}
the standard symplectic representation of the symplectic group. Our Isogeny conjecture
asserts that the conclusion holds \emph{for every} representation of the symplectic group, in
particular for all representations obtained via tensor constructions with
the standard one.
We show in the fourth section that the generalised Mumford-Tate and Tate conjectures are equivalent.
The implication ``Mumford-Tate implies Tate'' is straightforward
and is valid for the classical versions of these conjectures.
This is a simple consequence of the fact that the Mumford-Tate group is reductive and that the endomorphisms
of the Hodge structures are precisely those commuting with the action of the
Mumford-Tate group.
The other implication really uses the fact that the conclusion of the Tate conjecture
is true for every representation.

The generalised Shafarevich conjecture is the statement that an isogeny class of a 
$(\infty, l)$-integral structure contains only finitely many isomorphism classes.
We show in the fith section that the generalised Shafarevich conjecture implies the Isogeny conjecture (see section 5.1).
We follow ad-hoc arguments of Tate from \cite{Tate}.

The implication that the generalised Mumford-Tate conjecture implies the generalised Shafarevich conjecture ( theorem \ref{teo5.5}) is
much harder and totally new. It uses a non-trivial result on conjugacy classes of open subgroups
in $p$-adic groups (proposition \ref{prop5.8}) which, we believe, is of independent interest.

\subsection{Acknowledgements.}

The second author is very grateful to the Universit\'e de Paris-Sud for several invitations
and to the Leverhulme Trust for financial support. 

\section{Galois representations attached to points on Shimura varieties.}

In this section we explain how to attach a Galois representation to
a rational point on a Shimura variety.
Some simple consequences of the definition are given. We prove
that the neutral component of the $l$-adic monodromy group is contained
in the Mumford-Tate group (proposition \ref{prop1}) and that the  connected
centre of the Mumford-Tate group
is included in the connected centre of $l$-adic monodromy group (corollary \ref{centreMT2})
and coincides with it for Shimura varieties of Hodge type.

\subsection{Mumford-Tate groups and Shimura varieties.}

Let $(G,X)$ be a Shimura datum and $K$ a compact open subgroup
of $G(\AAA_f)$.

We let
$$
\pi \colon \Sh(G,X)\lto \Sh_K(G,X)
$$
be the projection.  Then $\pi$ is defined over the reflex field $E:=E(G,X)$ of $(G,X)$.

 Let $L$ be a finite extension
of $E$ and  $x$ be a point in $\Sh_K(G,X)(L)$.

\begin{lem}\label{lemme2.1}
Write $x = [s,\ol{g}]$ with $s$ in $X$ and $g$ in $G(\AAA_f)$.

(a) Assume that $K$ is neat.
The fibre $\pi^{-1}(x)$   has a simply transitive right
action by $K$.

(b) Assume that $x$ is Hodge generic. The group $Z_{G}(\QQ)\cap K$
is finite and the fibre $\pi^{-1}(x)$ has a simply transitive right action
by 
$$K/Z_{G}(\QQ)\cap K.
$$ 

\end{lem}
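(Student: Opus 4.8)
The fibre $\pi^{-1}(x)$ is the set of points of $\Sh(G,X)(\CC)$ mapping to $x=[s,\ol g]$. Writing $\Sh(G,X)(\CC)=G(\QQ)\backslash X\times G(\AAA_f)$ and $\Sh_K(G,X)(\CC)=G(\QQ)\backslash X\times G(\AAA_f)/K$, a point $[s',g']$ of $\Sh(G,X)$ lies over $[s,\ol g]$ precisely when there exist $\gamma\in G(\QQ)$ and $k\in K$ with $\gamma s'=s$ and $\gamma g' k= g$ (after identifying the two actions of $X$). The right action of $K$ on $\Sh(G,X)(\CC)$ by $[s',g']\cdot k=[s',g'k]$ obviously preserves $\pi^{-1}(x)$, so the content of both (a) and (b) is computing the stabiliser of a point of the fibre under this $K$-action, and showing transitivity. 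Transitivity is essentially built into the definition: if $[s',g']$ and $[s'',g'']$ both lie over $[s,\ol g]$ then both are $G(\QQ)$-equivalent to some $[s, g k^{-1}]$ (one uses that the $G(\QQ)$-action on $X$ together with the requirement that the first coordinate be $\Gamma$-conjugate to $s$ is rigid here because $x$ is a \emph{point}, not a positive-dimensional family), hence they differ by right multiplication by an element of $K$. So I would reduce immediately to the stabiliser computation.

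For the stabiliser: $[s,g]\cdot k=[s,gk]$ equals $[s,g]$ in $\Sh(G,X)(\CC)$ iff there is $\gamma\in G(\QQ)$ with $\gamma s=s$ in $X$ and $\gamma g k = g$ in $G(\AAA_f)$. The first condition says $\gamma$ centralises $s$, i.e. $\gamma\in Z_G(s)(\QQ)$; since $s$ has Mumford-Tate group $MT(s)$, the subgroup of $G(\QQ)$ fixing $s$ is $Z_G(MT(s))(\QQ)\cap (\text{something})$ — more precisely $\gamma$ fixes $s$ iff $\gamma$ commutes with the image of $s$, hence with $MT(s)$. The second condition then gives $k=g^{-1}\gamma^{-1}g$, so the stabiliser of $[s,g]$ in $K$ is
$$\{\,g^{-1}\gamma^{-1}g : \gamma\in G(\QQ),\ \gamma\ \text{fixes}\ s,\ g^{-1}\gamma^{-1}g\in K\,\}.$$
In case (a), $K$ is neat, hence torsion-free, so this stabiliser is a torsion-free subgroup of $G(\AAA_f)$ consisting of elements $g^{-1}\gamma^{-1}g$ with $\gamma$ fixing $s$; I would argue that any such $\gamma$ lies in a compact subgroup of $G(\AAA_f)$ (after conjugation by $g$, it lies in $K$), is therefore of finite order by neatness applied to $K$, hence trivial. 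Thus the stabiliser is trivial and the action is simply transitive, proving (a).

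For case (b), drop neatness but assume $x$ Hodge generic, i.e. $MT(s)=G$. Then $\gamma\in G(\QQ)$ fixing $s$ must commute with all of $G=MT(s)$, hence $\gamma\in Z_G(\QQ)$. So the stabiliser of $[s,g]$ in $K$ is $g^{-1}(Z_G(\QQ)\cap gKg^{-1})g$; taking $g=1$ (the general case reduces to this since $Z_G$ is normal, so $g^{-1}Z_G(\QQ)g=Z_G(\QQ)$ and conjugation by $g$ identifies $Z_G(\QQ)\cap gKg^{-1}$ with a subgroup of the same shape for the compact open $g^{-1}Kg$) one gets stabiliser $Z_G(\QQ)\cap K$. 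Finiteness of $Z_G(\QQ)\cap K$: it is a subgroup of $G(\QQ)$ that is both discrete (as $G(\QQ)$ is discrete in $G(\AAA_f)$) and relatively compact (contained in the compact open $K$), hence finite. The fibre is then the orbit space, with $K$ acting through the finite quotient $K/(Z_G(\QQ)\cap K)$ simply transitively — transitivity as above, and freeness of the quotient action is exactly the statement that we have divided out the full stabiliser. The main obstacle is the bookkeeping in verifying that "$\gamma$ fixes $s$ in $X$" is equivalent to "$\gamma$ centralises $MT(s)$" and then, in (a), that neatness of $K$ forces the resulting elements to be trivial rather than merely of finite order; once those two points are nailed down the rest is formal manipulation of the double-coset description.
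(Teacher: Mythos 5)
Your reduction of both parts to a stabiliser computation is the same as the paper's: transitivity is formal, and the stabiliser of $[s,g]$ in $K$ consists of the elements $k=g^{-1}qg$ with $q\in G(\QQ)$, $q\cdot s=s$; moreover "$q$ fixes $s$" is equivalent to "$q$ centralises $MT(s)$", which is exactly the paper's lemma \ref{lemme2.2} (proved via $\End_{HS}(V_\QQ)=\End_{M}(V_\QQ)$ for a faithful representation). However, the two points you defer to the end are not bookkeeping: one of them is where the real content lies, and your proposed arguments for it are incorrect in both (a) and (b).

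In (a) you claim that such a $q$, lying in the compact subgroup $gKg^{-1}$ of $G(\AAA_f)$, "is therefore of finite order". This is false as a general principle: a rational element contained in a compact open subgroup of $G(\AAA_f)$ can have infinite order --- e.g.\ any hyperbolic element of $\SL_2(\ZZ)\subset\SL_2(\widehat{\ZZ})$, or a fundamental unit in $T(\QQ)\cap T(\widehat{\ZZ})$ for $T=\Res_{F/\QQ}\GG_m$ with $F$ real quadratic; compactness at the finite places gives no control without an archimedean input. The paper obtains triviality only after placing $q$ in $C(\QQ)$ with $C=Z_G(MT(s))$, writing $C=Z_G C'$ as an almost direct product, and using two structural facts: $Z_{G}(\QQ)$ is discrete in $Z_{G}(\AAA_f)$ (this is where the standing hypothesis $G=MT(X)$ enters, via the appendix) and $C'(\RR)$ is compact; then $q^n=zc'$ with $z$ and $c'$ of finite order inside neat groups, hence trivial, hence $q=1$ by neatness of $K$. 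In (b) your finiteness argument for $Z_G(\QQ)\cap K$ rests on "$G(\QQ)$ is discrete in $G(\AAA_f)$", which is also false: $G(\QQ)$ is discrete in $G(\AAA)$, not in the finite ad\`eles (again $\SL_2(\ZZ)\subset\SL_2(\widehat{\ZZ})$ is an infinite intersection of $G(\QQ)$ with a compact open subgroup). What is true, and what the paper uses, is the discreteness of $Z_G(\QQ)$ in $Z_G(\AAA_f)$, which depends on $Z_G^0$ being isogenous to a product of $\GG_m$ and a torus with compact real points, i.e.\ on $G=MT(X)$; if $Z_G$ contained $\Res_{F/\QQ}\GG_m$ for $F$ real quadratic, $Z_G(\QQ)\cap K$ would be infinite and the statement would fail. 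So both parts genuinely require the centraliser/centre structure supplied by the Shimura datum axioms, not merely neatness plus compactness, and this is the missing ingredient in your proposal.
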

\begin{proof}

We first check that the fibre $\pi^{-1}(x)$ is $[s, gK]$.

Let $[s', g']$ be a point in $\pi^{-1}([s,\ol{g}])$.
There exists an element $q$ of $G(\QQ)$ and $k$ in $K$ such that
$$
s' = q s, \ g' = qgk
$$
We have $[qs,qgk]=[s,gk]$ in $\Sh(G,X)$.
The group $K$  acts by right multiplication and
this action is transitive.
Suppose that $[s,g]= [s, gk]$ with $k$ in $K$. 
There exists a $q$ in $G(\QQ)$ such that $gk = qg$ and 
$q.s=s$. Therefore 
\begin{equation}\label{eq2.1}
q\in G(\QQ)\cap Z_{G_{\RR}}(s(\SSS))(\RR)\cap gKg^{-1}.
\end{equation}
 
We make use of the following lemma.
\begin{lem}\label{lemme2.2}
Let $M$ be the Mumford-Tate group of $s$ and
$C:=Z_{G}(M)$.  Then 
$$
G(\QQ)\cap Z_{G_{\RR}}(s(\SSS))(\RR)=C(\QQ).
$$
\end{lem}
\begin{proof}
Fix a faithful representation $G\rightarrow \GL(V_{\QQ})$ on a $\QQ$-vector space
$V_{\QQ}$. Then $s:\SSS\rightarrow G_{\RR}\rightarrow \GL(V_{\RR})$ defines a  Hodge structure on
$V_{\QQ}$ see \cite{De2} 1.1.11--14. Let $\End_{HS}(V_{\QQ})$ denotes the endomorphisms
of $\QQ$-Hodge structures on $V_{\QQ}$. Then
$$
\End_{HS}(V_{\QQ})=\End_{s(\SSS)}(V_{\QQ})=\End_{M}(V_{\QQ})
$$
where, by slight abuse of notation, by $\End_{s(\SSS)}(V_{\QQ})$ we mean endomorphisms $f$ of $V_{\QQ}$
such that $f_{\RR}$ commutes with the action of $s(\SSS)$.
Therefore
$$
\GL(V_{\QQ})\cap Z_{\GL(V_{\RR})}(s(\SSS))(\RR)=Z_{\GL(V_{\QQ})}(M)(\QQ).
$$
The lemma is then obtained by taking the intersection
with $G(\QQ)$ in the last equation.

\end{proof}

We can now finish the proof of the lemma \ref{lemme2.1}.
Let $[s,g]$ be an element of  $\pi^{-1}(x)$ and $k$ an element of $K$  such that
$[s, gk]=[s,g]$. By the previous lemma and the equation (\ref{eq2.1})
there exists a   
$$
q\in C(\QQ)\cap gKg^{-1}
$$
 such that $q.s=s$ and $gk = qg$.  

Assume first that $x$ is Hodge generic.   
By the previous lemma $q\in Z_{G}(\QQ)\cap K$. As $Z_{G}(\QQ)\cap K$
acts trivially on $\pi^{-1}(x)$, we get a simply transitive  right action
of $K/Z_{G}(\QQ)\cap K$ on $\pi^{-1}(x)$. Any compact open 
subgroup of $G(\AAA_{f})$  contains a neat compact open subgroup of $G(\AAA_{f})$
(see \cite{Pi} 0.6).
To prove that $K\cap Z_{G}(\QQ)$ is finite, we just need to prove that 
$K\cap Z_{G}(\QQ)$ is trivial when $K$ is neat. 

Assume now that $K$ is neat.
Write $C = Z_{G} C'$ its decomposition as an almost direct product.
Let $K_C = C(\AAA_f)\cap K$, $K_Z = Z_{G}(\AAA_f)\cap K$ and $K_{C'}=C'(\AAA_f)\cap K$.
Let $q$ be an element of $K_C\cap C(\QQ)$. There is an integer $n$, an element $z$ of $Z_{G}(\QQ)\cap K_Z$
and an element $c'$ of $K_{C'}\cap C'(\QQ)$ such that
$q^n = z c'$.

 As $Z_{G}(\QQ)$ is discrete in $Z_{G}(\AAA_f)$ and $K_Z$ is neat, we have that $z=1$.
Similarly, using the fact that $C'(\RR)$ is compact, we get $c'=1$.
Therefore $q^n = 1$. As $K$ is neat, it follows that $q=1$. 
We have proved that $C(\QQ)\cap K = \{ 1\}$.

When $K$ is neat, $gKg^{-1}$ is neat and the previous proof gives
$C(\QQ)\cap gKg^{-1}=\{1\}$.
This shows that the fibre $\pi^{-1}(x)$  has a simply transitive  right
action by $K$. 

\end{proof}

Note that we have proved the following lemma that will be used later on.

\begin{lem} \label{lem2.2}
For any neat compact open subgroup $K$ of $G(\AAA_f)$,
$C(\QQ)\cap K = \{1\}$.
\end{lem}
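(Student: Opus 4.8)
The plan is to extract Lemma \ref{lem2.2} directly from the argument already carried out inside the proof of Lemma \ref{lemme2.1}. Concretely, one picks an arbitrary point $s \in X$ (the statement makes no reference to a point, so we are free to choose one; alternatively we observe that $C = Z_G(M)$ depends on a choice of $s$, and the claim is really about whatever group $C$ has been fixed). The only ingredients used are: (1) the almost direct product decomposition $C = Z_G \cdot C'$, where $C' = C^{\der}$ is semisimple; (2) the fact that $C'(\RR)$ is compact, which follows because $C = Z_G(M)$ and $M$ contains $s(\SSS)$, so that the Hodge structure induced on $\lie(C')$ via a faithful representation is of type $(0,0)$, forcing the real points of the adjoint group, hence of $C'$, to be compact; and (3) the neatness of $K$.

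First I would fix a faithful representation $G \hookrightarrow \GL(V_\QQ)$ and recall that neatness of $K$ means: for every $g \in K$, the subgroup of $\oQ^\times$ generated by the eigenvalues of $g$ acting on $V$ is torsion-free. In particular $K$ contains no nontrivial elements of finite order, and the same holds for $gKg^{-1}$ for any $g \in G(\AAA_f)$. Then I would set $K_C = C(\AAA_f) \cap K$, $K_Z = Z_G(\AAA_f) \cap K$, $K_{C'} = C'(\AAA_f) \cap K$, and take $q \in K_C \cap C(\QQ) = C(\QQ) \cap K$. Using the almost direct product $C = Z_G \cdot C'$, there is an integer $n \geq 1$ with $q^n = z c'$ for some $z \in Z_G(\QQ) \cap K_Z$ and $c' \in C'(\QQ) \cap K_{C'}$ (after possibly enlarging $n$ so that the $n$-th power lands in $Z_G \cdot C'$ and its factors lie in $K$; this is where one uses that $Z_G \cap C'$ is finite).

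Next I would kill each factor separately. Since $Z_G(\QQ)$ is discrete in $Z_G(\AAA_f)$ and $K_Z$ is a compact subgroup containing $z$ with $z$ of finite order (being a unit times a torsion element, or more simply: $z \in K_Z$ neat so $z$ generates a torsion-free group of eigenvalues, while $z$ is also... ) — more cleanly, $z$ lies in the discrete cocompact-up-to-finite subgroup $Z_G(\QQ) \cap K_Z$ of a neat group, and a neat group is torsion-free, while $z$ has finite order in $Z_G(\QQ)\cap K_Z$ because that group is finite (discrete and compact): hence $z = 1$. Similarly $c' \in C'(\QQ) \cap K_{C'}$ is an element of a discrete subgroup ($C'(\QQ)$ discrete in $C'(\AAA_f)$) of a compact group ($C'(\RR)$ compact times $K_{C'}$ compact), hence $C'(\QQ) \cap K_{C'}$ is finite, so $c'$ has finite order, so $c' = 1$ by neatness. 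Therefore $q^n = 1$, and neatness of $K$ gives $q = 1$. Finally, since $gKg^{-1}$ is again neat whenever $K$ is, the same conclusion holds with $K$ replaced by $gKg^{-1}$, but the statement as phrased only asks for $K$ itself.

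The main obstacle, such as it is, is purely bookkeeping: making precise the claim that a suitable power of $q \in C(\QQ) \cap K$ decomposes as $z c'$ with both factors in $K$ — this requires knowing that $Z_G \cap C'$ is finite (true since $C'$ is semisimple) and choosing $n$ divisible by its order, and then checking that $z = q^n (c')^{-1}$ and $c'$ genuinely lie in $K_Z$ and $K_{C'}$ respectively. There is no real mathematical difficulty here; the lemma is essentially a restatement of a step already completed, and indeed the excerpt explicitly says "Note that we have proved the following lemma", so the proof is simply: \emph{this was established in the course of proving Lemma \ref{lemme2.1}}, and one may just refer back to that paragraph.
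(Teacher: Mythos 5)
Your proposal is correct and follows essentially the same route as the paper: the paper itself notes that the lemma was already established in the course of proving Lemma \ref{lemme2.1}, via the almost direct product $C = Z_G C'$, the triviality of $z$ (discreteness of $Z_G(\QQ)$ in $Z_G(\AAA_f)$ plus neatness) and of $c'$ (compactness of $C'(\RR)$ plus neatness), and finally $q^n=1$ forcing $q=1$ by neatness. Your added remarks on why $C'(\RR)$ is compact and on the bookkeeping behind $q^n = zc'$ only make explicit steps the paper leaves implicit.
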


The fibre $\pi^{-1}(x)$  also has a left action by $\Gal(\ol L/L)$ which commutes with the 
action of $K$. 

We now make use of the following elementary lemma.

\begin{lem} \label{superlemma}
Let $S$ be a set. Suppose that a group $H$ acts simply transitively on $S$ on the right.
Suppose that a group $G$ acts on $S$ on the left and that the action of $G$ commutes with that of $H$.

Fix a point $s_0 \in S$.
There is a unique  homomorphism 
$\phi_0 \colon G \lto H$ such that for $g\in G$, 
$$
g\cdot s_0 = s_0 \cdot \phi_0(g)
$$
Furthermore, let $s$ be a point of $S$ and $h\in H$ such that $s = s_0 \cdot h$.
Let $\phi$ be  the homomorphism  $\phi \colon G \lto H$ such that for $g\in G$, 
$g\cdot s = s \cdot \phi(g)$
then $\phi=h^{-1} \phi_0 h$.
\end{lem}





\subsection{Adelic and $l$-adic  Galois representations}

We can now define Galois representations 
associated to rational points on Shimura varieties.
This will be the main object of study in this paper.

Assume first that $K$ is neat
and let $L$ be a number field containing 
the reflex field $E:=E(G,X)$ of the Shimura datum $(G,X)$.
Let
$$
x = [s,1]\in \Sh_{K}(G,X)(L)
$$   
with $s$ and element of $X$ and let $\widetilde{x} = [s,k]$ 
be a point in $\pi^{-1}(x)$ 
which is the image of $(s,k)$ in $\Sh(G,X)$. 
By applying the above lemma to our situation, we see 
 that there is a morphism
\begin{equation}
\rho_{\wt{x}} \colon \Gal(\ol L/L)\lto K
\end{equation}
describing the Galois action on $\pi^{-1}(x)$.
Let $\sigma$ be an element of $\Gal(\ol L/L)$, then
$$
\sigma(\widetilde{x}) = [s,k \rho_{\wt{x}}(\sigma)].
$$

If $x$ is Hodge generic then the same method gives a morphism
$$
\rho_{\wt{x}} \colon \Gal(\ol L/L)\lto K/Z_{G}(\QQ)\cap K
$$
such that 
for all $\sigma\in\Gal(\ol L/L)$
$$
\sigma(\widetilde{x}) = [s,k \rho_{\wt{x}}(\sigma)].
$$
This last formula makes sense as $Z_{G}(\QQ)\cap K$
acts trivially on $\Sh(G,X)$.

Fix a prime number $l$. Let $p_{l}: G(\AAA_f)\rightarrow G(\QQ_{l})$ be the  
projection. Let  $K^l$ be the kernel of the restriction of $p_{l}$ to $K$
and $K_{l}:=p_{l}(K)$.

Let 
$$
\Sh_{K^l}(G,X) := G(\QQ)\backslash X\times G(\AAA_f)/K^l
$$
and
$$
\pi_l \colon \Sh_{K^l}(G,X)\lto \Sh_{K}(G,X)
$$
be the natural projection.
We have the following commutative diagram:
 \[
 \xymatrix{
 \Sh(G,X) \ar[r]^{\pi^l} \ar[rd]^{\pi} &\Sh_{K^l}(G,X)\ar[d]^{\pi_l}\\
  & \Sh_K(G,X)
 }
 \]
where the horizontal map is the quotient by $K^l$. 

 Finally let  $p^l:G(\AAA_{f})\rightarrow G(\AAA_{f}^l)$ be the projection.
If $g\in G(\AAA_{f})$ we will write $g_{l}=p_{l}(g)$ and $g^l=p^l(g)$
and $[s,g_{l},\ol{g^l}]
\in \Sh_{K^{l}}(G,X)(\CC)$ for the 
image of $(s,g)\in X\times G(\AAA_{f})$. 

We recall that $\widetilde{x} = [s,k]$ 
is a point in $\pi^{-1}(x)$. Let  
$$
\widetilde{y}=\pi^l(\widetilde{x}) =[s,k_{l},\overline{k^l}]\in \pi_{l}^{-1}(x).
$$

Assume that $K$ is neat.
We define the $l$-adic Galois representation
$\rho_{\wt{x},l}:\Gal(\oL/L)\rightarrow K_{l}$ by the formula
$$
\rho_{\wt{x},l} = p_l \circ \rho_{\wt{x}}: \Gal(\ol L/L)\rightarrow K_{l}.
$$

\begin{lem}
The $l$-adic Galois representation $\rho_{\wt{x},l}$ describes the Galois action on $\pi_{l}^{-1}(x)$:
for all $\sigma\in\Gal(\ol L/L)$
$$
\sigma(\widetilde{y}) = [s,k_{l} \rho_{\wt{x},l}(\sigma), \overline{k^lp^l(\rho_{\wt{x}} (\sigma))}].
$$
Remark that $K^l$ is of finite index in $\tilde{K}^l:=p^l(K)$ 
therefore $\overline{k^lp^l(\rho_{\wt{x}} (\sigma))}$ varies in a finite set.
When $K=K_{l}K^l$, we have $\tilde{K}^l=K^l$ and
$\overline{k^lp^l(\rho_{\wt{x}} (\sigma))}=\ol{1}$. Therefore 
we simply get
$$
\sigma(\widetilde{y}) = [s,k_{l} \rho_{\wt{x},l}(\sigma),\ol{1}].
$$
Note that for all $l$ big enough, we have $K=K_{l}K^l$.
\end{lem}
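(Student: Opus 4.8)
The plan is to deduce the formula from the already-established relation $\sigma(\wt{x})=[s,\,k\,\rho_{\wt{x}}(\sigma)]$ by pushing it forward along the projection $\pi^l\colon\Sh(G,X)\lto\Sh_{K^l}(G,X)$. The essential observation is that $\pi^l$ is a morphism of schemes over the reflex field $E$ and $E\subseteq L$, so on $\ol L$-points $\pi^l$ is $\Gal(\ol L/L)$-equivariant. Since $\wt{y}=\pi^l(\wt{x})$, for every $\sigma\in\Gal(\ol L/L)$ this gives
$$
\sigma(\wt{y})=\pi^l(\sigma(\wt{x}))=\pi^l\big([s,\,k\,\rho_{\wt{x}}(\sigma)]\big).
$$

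Next I would unwind adelic coordinates. Put $g:=k\,\rho_{\wt{x}}(\sigma)\in G(\AAA_f)$. Since $p_l$ and $p^l$ are group homomorphisms, $k=(k_l,k^l)$ with respect to $G(\AAA_f)=G(\QQ_{l})\times G(\AAA_{f}^l)$, and $\rho_{\wt{x},l}=p_l\circ\rho_{\wt{x}}$ by definition, one has $g_l=k_l\,\rho_{\wt{x},l}(\sigma)$ and $g^l=k^l\,p^l(\rho_{\wt{x}}(\sigma))$. As the level $K^l$ has trivial component at $l$, it acts only on the away-from-$l$ coordinate, so the notation $[s,g_l,\ol{g^l}]$ for points of $\Sh_{K^l}(G,X)(\CC)$ gives $\pi^l([s,g])=[s,g_l,\ol{g^l}]$, and the previous display becomes
$$
\sigma(\wt{y})=[s,\,k_l\,\rho_{\wt{x},l}(\sigma),\,\ol{k^l\,p^l(\rho_{\wt{x}}(\sigma))}],
$$
which is the assertion.

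Finally I would justify the two remarks. Since $\rho_{\wt{x}}(\sigma)\in K$, we have $p^l(\rho_{\wt{x}}(\sigma))\in p^l(K)=\tilde{K}^l$, so $k^l\,p^l(\rho_{\wt{x}}(\sigma))$ stays in the single coset $k^l\tilde{K}^l$; and $[\tilde{K}^l:K^l]<\infty$ because $K$, being open in $G(\AAA_f)$, contains $\prod_{p\notin S}G(\ZZ_{p})$ for some finite set $S$ of primes and some integral model, whence $K=p_S(K)\times\prod_{p\notin S}G(\ZZ_{p})$ (with $p_S$ the projection to $\prod_{p\in S}G(\QQ_{p})$), so $K^l=K\cap G(\AAA_{f}^l)$ is open in $G(\AAA_{f}^l)$ and thus of finite index in the compact group $\tilde{K}^l$. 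Consequently $\ol{k^l\,p^l(\rho_{\wt{x}}(\sigma))}$ varies in a finite set. If moreover $K=K_lK^l$, i.e.\ $K=K_l\times K^l$ inside $G(\QQ_{l})\times G(\AAA_{f}^l)$, then $\tilde{K}^l=p^l(K)=K^l$, and $k\in K$ forces $k^l\in K^l$, so $\ol{k^l\,p^l(\rho_{\wt{x}}(\sigma))}=\ol{1}$ and the formula collapses to $\sigma(\wt{y})=[s,\,k_l\,\rho_{\wt{x},l}(\sigma),\,\ol{1}]$; and for $l\notin S$ the factorisation $K=p_S(K)\times\prod_{p\notin S}G(\ZZ_{p})$ reads $K=K_lK^l$ with $K_l=G(\ZZ_{l})$, which is why it holds for all but finitely many $l$.

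I do not expect a genuine difficulty here: the statement is pure bookkeeping once the relation $\sigma(\wt{x})=[s,\,k\,\rho_{\wt{x}}(\sigma)]$ is in hand. The three points worth a line each are the $\Gal(\ol L/L)$-equivariance of $\pi^l$ (immediate, since $\pi^l$ is an $E$-morphism and $E\subseteq L$), the splitting of the adelic coordinate of $k\,\rho_{\wt{x}}(\sigma)$ into its components at $l$ and away from $l$, and the elementary openness/finite-index analysis of $K^l\subseteq\tilde{K}^l$ underlying the two remarks.
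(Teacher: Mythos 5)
Your argument is correct and coincides with the paper's own proof: both push the relation $\sigma(\wt{x})=[s,k\rho_{\wt{x}}(\sigma)]$ through the projection $\pi^l$, which is Galois-equivariant because it is defined over $E\subseteq L$, and then read off the components at $l$ and away from $l$. Your extra justification of the two remarks (openness of $K^l$ in $G(\AAA_f^l)$ giving finite index in the compact group $\tilde{K}^l$, and the splitting $K=K_lK^l$ for almost all $l$) is sound bookkeeping that the paper simply asserts.
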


\begin{proof}
The map $\pi^l:\Sh(G,X) \lto \Sh_{K_l}(G,X)$ is defined over $E$
and sends $\wt{x}=[s,k] \in \Sh(G,X)$ to $\wt{y}=[s,k_{l},\ol{k^l}]\in\Sh_{K^{l}}(G,X)$).
For any $\sigma\in \Gal(\ol L/L)$, 
$\sigma(\wt{x})=[s,k \rho_{\wt{x}}(\sigma)]$.

 As  $\pi^l(\sigma(\wt{x}))= \sigma(\pi^l(\wt{x}))= \sigma(\wt{y})$
 we find the formula of the lemma.  
\end{proof}

\begin{defini}
When $x=[s,\ol{1}]\in \Sh_{K}(G,X)(L)$ and $\wt{x}=[s,1]$ we say that
the Zariski closure $H_{l}$ of $\rho_{\wt{x},l}(\Gal(\oL/L))$ in $G_{\QQ_{l}}$
is  the $l$-adic monodromy group of $\rho_{\wt{x}}$. 
\end{defini}

\begin{rem}\label{remCM}{\rm
The relation with the usual theory of complex multiplication is the following.
Let $(T,\{s\})$ be a CM-Shimura datum with reflex field $E=E(T,\{s\})$. Let $K_{T}$ be
a neat open compact subgroup of $T(\AAA_{f})$,
$x=[s,\overline{1}]\in \Sh_{K_{T}}(T,\{s\})(L)$
for a finite extension $L$ of $E$ and $\wt{x}=[x,1]\in \Sh(T,\{s\})$. 
The Galois action on $\Sh(T,\{s\})$ is given by a reciprocity morphism
$$
r:\Gal(\ol{L}/L)^{ab}\rightarrow \ol{T(\QQ)}\backslash T(\AAA_{f})
$$
by the formula $\sigma([s,1])=[s,r(\sigma)]$.
As 
$$
\pi:\Sh(T,\{s\})\rightarrow \Sh_{K_{T}}(T,\{s\})
$$
is defined over $L$, we get 
$$\pi(\sigma(\wt{x}))=\sigma(x)=x=[s,\ol{1}] = [s, \ol{r(\sigma)}].$$
Therefore $r(\sigma)\in T(\QQ) K_{T}$. As $K_{T}$ is neat $K_{T}\cap T(\QQ)$
is trivial and we can write $r(\sigma)=t(\sigma)k(\sigma)$ with $t(\sigma)\in T(\QQ)$
and $k(\sigma)\in  K_{T}$. Then 
$$
\sigma([s,1])= [s,k(\sigma)]
$$
and we see that the action of $\Gal(\oL/L)$ on $\pi^{-1}(x)$ is given
by a morphism $\rho_{\wt{x}}:\Gal(\oL/L)\rightarrow K_{T}$, 
$\sigma\mapsto \rho_{\wt{x}}(\sigma)=k(\sigma)$.

As  a consequence of this discussion and classical results on complex multiplication
due to Shimura and Taniyama,
if $M\subset T$ is the Mumford-Tate group of $s$ then the $l$-adic monodromy
group of $\rho_{\wt{x}}$ is $M_{\QQ_{l}}$.}
\end{rem}

We now explain why our construction of Galois representations
attached to points on Shimura varieties, in the case of $\cA_{g,n}$
gives the Galois representations on the Tate module of abelian
varieties.

Suppose that $\Sh_K(G,X)$ is $\cA_{g,n}$ ($n \geq 3$), the moduli space of principally polarised abelian varieties with level $n$ structure. 
Therefore $G=\GSp_{2g}$, $K=\{\alpha\in \GSp_{2g}(\widehat{\ZZ})\vert \alpha \equiv \mbox{Id } [n]\}$. 
Let $x$ be a point in $\cA_{g,n}(L)$ where $L$ is a number field.
The point $x=[s, \ol{1}]$ corresponds to the datum $(A, \cL, \phi_n)$
where $A$ is a $g$-dimensional abelian variety over $L$, $\cL$ is
a principal polarisation and $\phi_n$ is a symplectic isomorphism  
$$
\phi_{n}\colon A_n(\ol L)\lto (\ZZ/n\ZZ)^{2g}
$$
where $A_{n}$ denotes the kernel of the multiplication by $n$ map on $A$.

Choose the point $\wt{x} = [s,1]$ in $\Sh(G,X)$.
This point corresponds to $(A,\cL, \wt{\phi})$  
with $A$ and $\cL$ the same as above and
$$
\wt{\phi} \colon T(A) \lto \widehat{\ZZ}^{2g}
$$
is a symplectic isomorphism inducing $\phi_n$ on $A_n$ by reduction.
Here $T(A) = \prod_{l} T_lA$ with $T_lA$ being the $l$-adic Tate module
(or to put it another way, $T(A) = \varprojlim_{n \in \NN} A_n(\ol L)$).
The fibre $\pi^{-1}([s,\ol{1}])$ is the $K$-orbit of $\wt{\phi}$ where
for $k\in K$, $\wt{\phi}\cdot k$ is the composite
$$
T(A) \overset{\wt{\phi}}{\lto} \widehat{\ZZ}^{2g} \overset{k}{\lto} \widehat{\ZZ}^{2g}
$$
and the points
$[\wt{s},k]$ correspond  to $(A,\cL, \wt{\phi}\cdot k)$.
Notice that giving the isomorphim $\wt{\phi}$ is equivalent to
choosing a symplectic basis for $T(A)$. Furthermore, the action
of $\sigma$ in $\Gal(\ol L/L)$ on $T(A)$ is determined by its
effect on a basis.
On the other hand, the set of symplectic bases of $T(A)$
has a transitive action by $\GSp_{2g}(\widehat{\ZZ})$. 
The Galois and $\GSp_{2g}(\widehat{\ZZ})$ actions commute.
The condition that the
bases we consider reduce mod $n$ to a fixed given base, implies 
that the restriction of this  action to $K$ has no fixed points.

We see, by the uniqueness of the morphism in \ref{superlemma}, 
that the Galois representation $\rho_{\wt{x}}$
is the representation $\rho_A$ on $T(A)$ obtained from the
action of $\Gal(\ol L/L)$ on each $A_n(\ol L)$.
The same  holds for $l$-adic representations 
$\rho_{\wt{x}, l}$ and $\rho_{A,l}$ at least for $l$ prime to $n$.

\begin{prop} \label{prop1}
Assume that $K$ is neat.
Let $x=[s,\ol{1}]$ be a point of $\Sh_K(G,X)(L)$ and let $\rho_{\wt{x}}$ be the Galois representation
attached to $\wt{x}=[s,1]$. Let $M$ be the Mumford-Tate group of $s$. 
Let $L'$ be the subfield of $\overline{\QQ}$ generated by $L$ and the reflex field $E(M,X_{M})$.
Then 
$$
\rho_{\wt{x}}(\Gal(\ol L/L'))\subset M(\AAA_f)\cap K
$$
and by compatibility,
$$
\rho_{\wt{x},l}(\Gal(\ol L/L'))\subset M(\QQ_l)\cap K_l.
$$
\end{prop}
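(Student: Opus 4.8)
The plan is to place $\wt{x}$ inside the sub-Shimura variety attached to $M$ and to play off the fact that this sub-variety is defined over $L'$ against the fact that $\pi^{-1}(x)$ is a torsor under $K$.

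Since $M=MT(s)$ is a connected reductive subgroup of $G$ through which $s$ factors, the $M(\RR)$-conjugacy class $X_{M}$ of $s$ makes $(M,X_{M})$ a Shimura sub-datum of $(G,X)$ (with $s$ Hodge generic in it), and the inclusion induces a morphism $f\colon \Sh(M,X_{M})\to\Sh(G,X)$ defined over $E(M,X_{M})\cdot E(G,X)$, which is contained in $L'$. One has $\wt{x}=f(\wt{y})$ with $\wt{y}=[s,1]\in\Sh(M,X_{M})$, and the image of $f$ is a sub-ind-scheme stable under $\Gal(\ol{L}/L')$.

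Fix $\sigma\in\Gal(\ol{L}/L')$. Since $x\in\Sh_{K}(G,X)(L)$ and $\pi$ is defined over $E\subseteq L'$, we have $\sigma(\wt{x})\in\pi^{-1}(x)$; by Lemma \ref{lemme2.1}(a) the fibre $\pi^{-1}(x)$ is a right $K$-torsor with base point $\wt{x}$, so $\sigma(\wt{x})=\wt{x}\cdot\rho_{\wt{x}}(\sigma)$ with $\rho_{\wt{x}}(\sigma)\in K$. On the other hand $\sigma(\wt{x})=f(\sigma(\wt{y}))$ lies in the image of $f$, hence $\sigma(\wt{x})=[s',m']$ for some $s'\in X_{M}$ and $m'\in M(\AAA_{f})$. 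Comparing the two expressions, there is $q\in G(\QQ)$ with $s'=q\cdot s$ and $\rho_{\wt{x}}(\sigma)=q^{-1}m'$. From $MT(s)=M$ and $s'=q\cdot s\in X_{M}$ one gets $MT(s')=qMq^{-1}\subseteq M$, whence $qMq^{-1}=M$ and $q\in N_{G}(M)(\QQ)$. Therefore $\rho_{\wt{x}}\bigl(\Gal(\ol{L}/L')\bigr)\subseteq N_{G}(M)(\AAA_{f})\cap K$, and the class of $\rho_{\wt{x}}(\sigma)$ modulo $M$ is the \emph{rational} point $\ol{q^{-1}}\in (N_{G}(M)/M)(\QQ)$.

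It remains to push $\rho_{\wt{x}}$ into $M(\AAA_{f})$, and this is where the one genuine difficulty lies. As $M$ is connected reductive, $N_{G}(M)^{0}=M\cdot Z_{G}(M)^{0}$, so $W:=N_{G}(M)/(M\cdot Z_{G}(M))$ is a finite group; reducing $\rho_{\wt{x}}$ modulo $M\cdot Z_{G}(M)$ gives a continuous homomorphism $\Gal(\ol{L}/L')\to W$, whose image is contained in the image of $N_{G}(M)(\AAA_{f})\cap K$ in $W$. That group is a subgroup of the neat group $K$, so its image is torsion-free; being also contained in the finite group $W$, it is trivial. Hence $\rho_{\wt{x}}\bigl(\Gal(\ol{L}/L')\bigr)\subseteq (M\cdot Z_{G}(M))(\AAA_{f})\cap K$ and, for each $\sigma$, $q^{-1}=\rho_{\wt{x}}(\sigma)(m')^{-1}\in (M\cdot Z_{G}(M))(\AAA_{f})$, so that $q\in (M\cdot Z_{G}(M))(\QQ)$. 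Reducing modulo $M$ once more, and using $M\cap Z_{G}(M)=Z_{M}$ (the centre of $M$) so that $(M\cdot Z_{G}(M))/M\cong Z_{G}(M)/Z_{M}$, we obtain a continuous homomorphism $\Gal(\ol{L}/L')\to (Z_{G}(M)/Z_{M})(\QQ)$ whose image lies in the image $\ol{U}$ of $(M\cdot Z_{G}(M))(\AAA_{f})\cap K$, a neat compact open subgroup of $(Z_{G}(M)/Z_{M})(\AAA_{f})$. The point now is that $(Z_{G}(M)/Z_{M})(\QQ)\cap\ol{U}$ is \emph{finite}: this follows from the same facts invoked in the proof of Lemma \ref{lemme2.1} — namely that $C:=Z_{G}(M)$ is an almost direct product $Z_{G}\cdot C'$ with $C'(\RR)$ compact, together with the properties of $Z_{G}$ supplied by the Shimura-datum axioms (which hold because $G=MT(X)$; cf.\ the appendix). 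A finite subgroup of a neat group being trivial, we conclude $\rho_{\wt{x}}\bigl(\Gal(\ol{L}/L')\bigr)\subseteq M(\AAA_{f})\cap K$. Finally, applying $p_{l}$ and using $\rho_{\wt{x},l}=p_{l}\circ\rho_{\wt{x}}$ together with $p_{l}\bigl(M(\AAA_{f})\cap K\bigr)\subseteq M(\QQ_{l})\cap K_{l}$ gives $\rho_{\wt{x},l}\bigl(\Gal(\ol{L}/L')\bigr)\subseteq M(\QQ_{l})\cap K_{l}$.
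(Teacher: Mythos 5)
Your proposal is correct, but it takes a genuinely different route from the paper. The paper promotes $(M,X_{M})$ to a Shimura datum to which the fibre-torsor construction of Section 2 applies directly: it checks that $Z_{M}(\QQ)$ is discrete in $Z_{M}(\AAA_f)$ (using Lemma 5.1 of \cite{KliYa} to get $Z_{G}^0\subset Z_{M}^0$, plus the appendix), so that the Galois action on the fibre of $\Sh(M,X_{M})\to \Sh_{K\cap M(\AAA_f)}(M,X_{M})$ is described by a homomorphism $\rho_{M,\wt{x}}\colon \Gal(\ol L/L')\to M(\AAA_f)\cap K$, and then identifies this with $\rho_{\wt{x}}|_{\Gal(\ol L/L')}$ by functoriality and the uniqueness in Lemma \ref{superlemma}. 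You instead use only that the image of $\Sh(M,X_{M})$ in $\Sh(G,X)$ is $\Gal(\ol L/L')$-stable, deduce $\rho_{\wt{x}}(\sigma)=q^{-1}m'$ with $q\in N_{G}(M)(\QQ)$, $m'\in M(\AAA_f)$, and then descend from $N_{G}(M)$ to $M$ through the two quotients $N_{G}(M)/(MZ_{G}(M))$ (finite) and $(MZ_{G}(M))/M\cong Z_{G}(M)/Z_{M}$, killing both images by neatness. This works, and it has the mild advantage of not invoking the input $Z_{G}^0\subset Z_{M}^0$ from \cite{KliYa}; the price is two points you leave compressed and should spell out. First, torsion-freeness of the images of $K\cap N_{G}(M)(\AAA_f)$ in the two quotients is not a formal consequence of neatness of $K$ as a subgroup of $G(\AAA_f)$: you need the standard fact that neatness is preserved under images by $\QQ$-rational homomorphisms of algebraic groups (\cite{Pi2} 0.6). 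Second, the finiteness of $(Z_{G}(M)/Z_{M})(\QQ)\cap\ol{U}$ is not literally Lemma \ref{lemme2.1} or Lemma \ref{lem2.2}; it needs its own short argument in the same style, e.g.: the further quotient of $Q:=Z_{G}(M)/Z_{M}$ by the image of $Z_{G}$ is a quotient of $C'$ and so has compact real points, while the image of $Z_{G}$ in $Q$ inherits from $Z_{G}$ (via $G=MT(X)$ and the appendix) the property that its rational points are discrete in its $\AAA_f$-points; combining the two shows every element of $Q(\QQ)\cap\ol{U}$ is torsion, which together with neatness of $\ol{U}$ gives triviality. With these two points made explicit your argument is complete, and the final $l$-adic assertion is obtained, as in the paper, by applying $p_{l}$.
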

\begin{proof}
Let $X_M$ be the $M(\RR)$-conjugacy class of $s$, then $(M,X_M)$ is a sub-Shimura datum
of $(G,X)$. Then $\Sh_{K\cap M(\AAA_f)}(M,X_M)$ is a sub-Shimura variety containing $x$.
The projective limit $\Sh(M,X_M)$ is a subvariety of $\Sh(G,X)$.

We have the following commutative diagram
$$
\begin{array}{ccccl}
& & \wt{x}  \in \Sh(M,X_M) & \hookrightarrow
& \Sh(G,X) \\
& &  \downarrow & & \downarrow  \\
& & x\in \Sh_{K\cap M(\AAA_f)}(M,X_M) & \hookrightarrow & \Sh_K(G,X)
\end{array}
$$

 As $M$ is the generic Mumford-Tate group on $X_M$, from lemma 5.1 of \cite{KliYa}, it follows that $Z_{G}^0\subset Z_M^0$. Then $Z_M^0$ is isogeneous to $Z_{G}^0 \times T$ where $T$ is a subtorus of $Z_M^0$ satisfying
the condition that $T(\RR)$ is compact.
As $Z_{G}^0(\QQ)$ and $T(\QQ)$ are discrete in $Z_{G}^0(\AAA_f)$
 and $T(\AAA_f)$ respectively, it follows that
the same holds for $Z_M^0(\QQ)$ inside $Z_M^0(\AAA_f)$ (c.f. appendix).
Then there exists a representation $\rho_{M,\wt{x}} \colon \Gal(\ol L / L')\lto M(\AAA_f)\cap K$.
As in the above diagram, all arrows are defined over $L'$, $\rho_{\wt{x}}$ factorises through $\rho_{M,\wt{x}}$.
\end{proof}

The definition of the Galois representations is functorial in the following sense.
Let $\theta:(H,X_{H})\rightarrow (G,X)$ be a morphism of Shimura data. 
Let $K_{H}$ and $K$ be neat open compact subgroups of $H(\AAA_{f})$
and $G(\AAA_{f})$ and assume that $\theta(K_{H})\subset K$.
We then get  associated morphisms of Shimura varieties:
$$
f:\Sh_{K_{H}}(H,X_{H})\rightarrow \Sh_{K}(G,X)
$$
and 
$$
\wt{f}:\Sh(H,X_{H})\rightarrow \Sh(G,X).
$$
Let $L$ be a number field containing the reflex fields of $(G,X)$
and  $(H,X_{H})$, then $f$ and $\wt{f}$ are defined over $L$.
Let
$$
x=[s,\ol{1}]\in \Sh_{K_{H}}(H,X_{H})(L),
$$
$$
y=f(x)=[t,\ol{1}]\in \Sh_{K}(G,X)(L),
$$ 
$\wt{x}=[s,1]\in \Sh(H,X_{H})(\oL)$
and $\wt{y}=\wt{f}(\wt{x})\in\Sh(G,X)(\oL)$. Then 
using that the commutative diagram
$$
\begin{array}{ccccl}
& &    \Sh(H,X_H) & \rightarrow
& \Sh(G,X) \\
& &  \downarrow & & \downarrow  \\
& &  \Sh_{K_{H}}(H,X_H) & \rightarrow & \Sh_K(G,X)
\end{array}
$$
is defined over $L$, we get
\begin{equation}
\rho_{\wt{y}}=\theta \rho_{\wt{x}}.
\end{equation}
Let $K^l$ (resp. $K_{H}^l$) be the kernel of the projections of $K$
(resp. $K_{H}$) on $G(\QQ_{l})$ (resp $H(\QQ_{l})$)
 Let  $K_{l}$ and $K_{H,l}$ be the image of these projections.
We have an induced morphism $\theta_{l}: K_{H,l}\rightarrow K_{l}$
and the relation
\begin{equation}
\rho_{\wt{y},l}=\theta_{l} \rho_{\wt{x},l}
\end{equation}

\begin{cor}
Let $H_{l}$ (resp $H'_{l}$) be the $l$-adic monodromy groups 
of $\rho_{\wt{x}}$ (resp. $\rho_{\wt{y}})$. Then
$H'_{l}=\theta_{l}(H_{l})$.
\end{cor}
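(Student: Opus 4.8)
The plan is to reduce the statement immediately to the functoriality identity $\rho_{\wt{y},l}=\theta_l\circ\rho_{\wt{x},l}$ proved just above, combined with the classical fact that the image of a homomorphism of linear algebraic groups is a Zariski-closed subgroup.

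First I would set $\Gamma:=\rho_{\wt{x},l}(\Gal(\ol L/L))\subseteq H(\QQ_l)$, so that by definition $H_l$ is the Zariski closure of $\Gamma$ in $H_{\QQ_l}$ (the base change to $\QQ_l$ of the Shimura group $H$); in particular $\Gamma$ is Zariski-dense in $H_l$. The functoriality relation gives $\rho_{\wt{y},l}(\Gal(\ol L/L))=\theta_l(\Gamma)$, and hence $H'_l$ is the Zariski closure of $\theta_l(\Gamma)$ inside $G_{\QQ_l}$. So everything comes down to computing the Zariski closure of $\theta_l(\Gamma)$.

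Next I would restrict $\theta_l\colon H_{\QQ_l}\to G_{\QQ_l}$ to the closed subgroup $H_l$; by the standard structure theory of linear algebraic groups (valid over the non--algebraically closed field $\QQ_l$ as well, see e.g.\ Borel, \emph{Linear Algebraic Groups}), the image $\theta_l(H_l)$ is a \emph{closed} subgroup of $G_{\QQ_l}$. Then I would check that $\theta_l(\Gamma)$ is Zariski-dense in $\theta_l(H_l)$: if $Z\subseteq\theta_l(H_l)$ is closed and contains $\theta_l(\Gamma)$, then the closed subset $\theta_l^{-1}(Z)$ of $H_l$ contains $\Gamma$, hence equals $H_l$, whence $Z=\theta_l(H_l)$. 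Since $\theta_l(H_l)$ is closed in $G_{\QQ_l}$ and $\theta_l(\Gamma)$ is dense in it, the Zariski closure of $\theta_l(\Gamma)$ in $G_{\QQ_l}$ is precisely $\theta_l(H_l)$, i.e.\ $H'_l=\theta_l(H_l)$.

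I do not anticipate a genuine obstacle: the argument is purely formal once one has the functoriality relation and the closedness of the image of an algebraic-group homomorphism. The only point to watch is the bookkeeping of ambient groups --- $H_l$ is the closure taken inside $H_{\QQ_l}$ whereas $H'_l$ is taken inside $G_{\QQ_l}$ --- which is exactly what gives the asserted equality its content.
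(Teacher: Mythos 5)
Your argument is correct and is exactly the intended one: the paper states the corollary as an immediate consequence of the functoriality relation $\rho_{\wt{y},l}=\theta_{l}\circ\rho_{\wt{x},l}$, and your filling-in (image of an algebraic-group homomorphism is closed, plus the preimage/density argument identifying the Zariski closure of $\theta_l(\Gamma)$ with $\theta_l(H_l)$) is the standard justification the authors leave implicit.
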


An interesting case of the corollary is the following.
Let $$
x=[s,\ol{1}]\in \Sh_{K}(G,X)(L),$$
$\wt{x}=[s,1]$ and 
$\rho_{\wt{x}}$ the associated Galois representation.

We have a decomposition as an almost direct product $G=Z_{G}^0G^\der$. 
Let $G^{ab}$ be the torus $G/G^\der$. 
To $(G,X)$ one associates two Shimura data $(G^{ab},\{c\})$ and
$(G^\ad, X^{\ad})$. The reflex field $E(G,X)$ is the composite of
$E(G^{ab},\{c\})$ and $E(H^\ad,X^{\ad})$. There are morphisms
of Shimura data
$$
\theta^\ab\colon (G,X)\longrightarrow (G^{ab},\{c\}) \mbox{ and }
\theta^\ad\colon (G,X)\longrightarrow (G^\ad, X^{\ad}).
$$
Notice that refering to $(G^{ab},\{ c \})$ as `Shimura datum' is a slight abuse of 
terminology. Indeed, it can (and does) happen that $G^{ab}$ is $\GG_m$,
in which case $c \colon \SSS \lto \GG_{m\RR}$ is a power of the norm morphism.
However, for a compact open subgroup $K_{G^{ab}}$ of $G^{ab}(\AAA_f)$, the set
$\Sh_{K_{G^{ab}} } (G^{ab},\{ c \}):= G^{ab}(\QQ)\backslash G^{ab} \times \{ c\} / K_{G^{ab}}$
is well-defined and has a well-defined Galois action via 
a reciprocity morphism, as described by Deligne.
In particular, given a point $x = [s,1]$ in
$\Sh_{K_{G^{ab}}}(G^{ab},\{ c \})(L)$, the Galois representation $\rho_{\wt{x},l}$ (where $\wt{x} = [s,1]$)
defined as before has the property that the image of $\rho_{\wt{x},l}$ in $G^{ab}(\QQ_l)$ is 
open, just like in the case where $(G^{ab}, \{ c \})$ is a special Shimura datum.

Fix some open compact subgroups $K_{G^{ab}}\subset G^{ab}(\AAA_{f})$
and $K^{ad}\subset H^{ad}(\AAA_{f})$ containing 
$\theta^{ab}(K)$ and $\theta^{ad}(K)$ respectively.

\begin{cor}\label{centreMT}
Let $x^{ad}$ (resp. $x^{ab}$) the images of $x$ in 
$\Sh_{K^{ad}}( G^\ad, X^{\ad})$ (resp. $\Sh_{K_{G^{ab}}}(G^{ab},\{c\})$).
In the same way, we define $\wt{x}^{ad}$ and $\wt{x}^{ab}$.
Let $H_{l}$, $H^{ad}_{l}$ and $H^{ab}_{l}$ be the $l$-adic monodromy groups 
of $\rho_{\wt{x}}$, $\rho_{\wt{x}^{ad}}$,$\rho_{\wt{x}^{ab}}$
respectively.
Then
$$
H^{ad}_{l}=\theta^{ad}(H_{l}) \mbox{ and } H^{ab}_{l}=\theta^{ab}(H_{l}).
$$ 
Using the remark \ref{remCM}
we find that 
$\theta^\ab (H_{l})=G^{ab}_{\QQ_{l}}$ and  that $Z_G^0\otimes {\QQ_{l}}\subset H_{l}$.
\end{cor}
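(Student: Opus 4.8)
The plan is to establish the four assertions in turn, the first three quickly and the fourth with more care. The equalities $H^{\ad}_{l}=\theta^{\ad}(H_{l})$ and $H^{\ab}_{l}=\theta^{\ab}(H_{l})$ follow at once from the preceding corollary (that for a morphism of Shimura data the $l$-adic monodromy group of the image point is the image of the $l$-adic monodromy group) applied to $\theta^{\ad}\colon(G,X)\to(G^{\ad},X^{\ad})$ and $\theta^{\ab}\colon(G,X)\to(G^{\ab},\{c\})$ with the chosen level subgroups $K^{\ad}$ and $K_{G^{\ab}}$: since $\rho_{\wt{x}^{\ad}}=\theta^{\ad}\circ\rho_{\wt{x}}$ and $\rho_{\wt{x}^{\ab}}=\theta^{\ab}\circ\rho_{\wt{x}}$, passing to Zariski closures of the images gives the two relations.

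Next I would show $\theta^{\ab}(H_{l})=G^{\ab}_{\QQ_{l}}$. Here $(G^{\ab},\{c\})$ is a torus (hence CM) Shimura datum and $\wt{x}^{\ab}=[c,1]$ is a point of the corresponding $0$-dimensional Shimura variety, so Remark \ref{remCM} applies directly and yields $H^{\ab}_{l}=MT(c)_{\QQ_{l}}$; equivalently, one may invoke the remark preceding the present statement, namely that the image of $\rho_{\wt{x}^{\ab},l}$ is open in $G^{\ab}(\QQ_{l})$, whence its Zariski closure is the whole connected group $G^{\ab}_{\QQ_{l}}$. It then remains to check $MT(c)=G^{\ab}$. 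Since the Mumford--Tate construction commutes with morphisms of $\QQ$-groups, $MT(c)=\theta^{\ab}(MT(s))$; and using $Z_{G}^{0}\subset Z^{0}_{MT(s)}$ (lemma 5.1 of \cite{KliYa}) together with the fact that the restriction of $\theta^{\ab}$ to the central torus $Z_{G}^{0}$ is already surjective onto $G^{\ab}$, we get $MT(c)\supset\theta^{\ab}(Z_{G}^{0})=G^{\ab}$. Hence $\theta^{\ab}(H_{l})=H^{\ab}_{l}=G^{\ab}_{\QQ_{l}}$.

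The remaining assertion, $Z_{G}^{0}\otimes\QQ_{l}\subset H_{l}$, is the substantive one. By Proposition \ref{prop1} I may enlarge $L$ and assume $H_{l}\subset M_{\QQ_{l}}$, with $M=MT(s)$ and $Z_{G}^{0}\subset Z^{0}_{M}\subset M$. Write $G=Z_{G}^{0}\cdot G^{\der}$ as an almost direct product, so that $\theta^{\ab}$ kills $G^{\der}$ and restricts to an isogeny $\iota\colon Z_{G}^{0}\to G^{\ab}$; the point is to pass from the surjectivity $\theta^{\ab}(H_{l})=G^{\ab}_{\QQ_{l}}$ of the previous paragraph to the genuine containment $Z_{G}^{0}\otimes\QQ_{l}\subset H_{l}$. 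My preferred route is to exploit the explicit shape of the Galois action on the abelian part: $\rho_{\wt{x}^{\ab}}$ is the reciprocity morphism of the torus $G^{\ab}$, built from the norm of the Shimura cocharacter $\mu_{s}$; since a suitable multiple of $\theta^{\ab}\circ\mu_{s}$ lifts through $\iota$ to a cocharacter of $Z_{G}^{0}$, this furnishes a reciprocity-type morphism valued in $Z_{G}^{0}(\AAA_{f})$ whose composition with $\iota$ recovers a power of $\rho_{\wt{x}^{\ab}}$ and which is a direct constituent of $\rho_{\wt{x}}$ itself; its Zariski closure is then all of $Z_{G}^{0}\otimes\QQ_{l}$, because $\iota$ is an isogeny and the image in $G^{\ab}$ is the whole torus. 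A variant, available if one grants that $H_{l}^{0}$ is reductive, is to write $H_{l}^{0}=Z(H_{l}^{0})^{0}\cdot(H_{l}^{0})^{\der}$ with $(H_{l}^{0})^{\der}\subset G^{\der}_{\QQ_{l}}$ and to combine $\theta^{\ab}(H_{l})=G^{\ab}_{\QQ_{l}}$ with the isogeny $(\theta^{\ad},\theta^{\ab})\colon G\to G^{\ad}\times G^{\ab}$ and Goursat's lemma.

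The step I expect to be the main obstacle is exactly this last one. The surjectivity $\theta^{\ab}(H_{l})=G^{\ab}_{\QQ_{l}}$ does \emph{not} formally force $Z_{G}^{0}\otimes\QQ_{l}\subset H_{l}$: a ``diagonal'' subgroup of $M_{\QQ_{l}}$ can surject onto $G^{\ab}_{\QQ_{l}}$ while meeting the central torus trivially. One therefore has to use something more, either the concrete reciprocity description of the abelian part of $\rho_{\wt{x}}$ or a structural input on the monodromy group, and this is where the argument must be carried out carefully rather than formally. Once this is in place, all four assertions of the corollary are established.
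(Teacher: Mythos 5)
Your treatment of the first three assertions coincides with the paper's own derivation: the equalities $H^{ad}_{l}=\theta^{ad}(H_{l})$ and $H^{ab}_{l}=\theta^{ab}(H_{l})$ are exactly the preceding functoriality corollary applied to $\theta^{ad}$ and $\theta^{ab}$ (via $\rho_{\wt{x}^{ad}}=\theta^{ad}\circ\rho_{\wt{x}}$ and $\rho_{\wt{x}^{ab}}=\theta^{ab}\circ\rho_{\wt{x}}$, then taking Zariski closures), and $\theta^{ab}(H_{l})=G^{ab}_{\QQ_{l}}$ follows from Remark \ref{remCM} once one knows $MT(\theta^{ab}\circ s)=G^{ab}$, which you obtain, as the paper implicitly does, from $Z_{G}^{0}\subset Z^{0}_{MT(s)}$ (lemma 5.1 of \cite{KliYa}, using the standing assumption $G=MT(X)$) together with the surjectivity of $\theta^{ab}$ on $Z_{G}^{0}$. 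Up to that point the proposal is correct and is the same argument as the paper's.

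The gap is the last containment $Z_{G}^{0}\otimes\QQ_{l}\subset H_{l}$, which you explicitly leave open. You are right that it does not follow formally from $\theta^{ab}(H_{l})=G^{ab}_{\QQ_{l}}$: a subgroup such as the Zariski closure of $\{\diag(u,1)\}$ in $\GL_{2}$ surjects onto the abelianisation while missing the scalars, so information about $\theta^{ab}\circ\rho_{\wt{x}}$ alone cannot decide whether $H_{l}$ contains the central torus. But neither of your two suggested repairs is carried out, and each stalls exactly where the work lies. In the reciprocity route, lifting a power of $\mu_{c}=\theta^{ab}\circ\mu_{s}$ through the isogeny $Z_{G}^{0}\to G^{ab}$ does give a $Z_{G}^{0}$-valued reciprocity-type morphism abstractly, but the assertion that it is ``a direct constituent of $\rho_{\wt{x}}$'' is precisely the statement to be proved: Remark \ref{remCM} identifies only the composite $\theta^{ab}\circ\rho_{\wt{x}}$, and relating the lifted morphism to the position of $\rho_{\wt{x}}(\Gal(\ol L/L))$ inside $G(\AAA_{f})$ is the whole difficulty. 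The Goursat variant does not close it either: even granting that $H_{l}^{0}$ is reductive, $\theta^{ad}(H_{l})$ is not known to be semisimple (a priori it can be a torus, as in the CM case), so a nontrivial common quotient with $G^{ab}_{\QQ_{l}}$ cannot be excluded. For comparison, the paper itself supplies no further argument at this point: it asserts the containment as a consequence of Remark \ref{remCM} and the functoriality relations. So your write-up reproduces everything the paper actually argues and correctly isolates the one step the paper treats as immediate, but as it stands the fourth assertion of the corollary is not established in your proposal.
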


\begin{cor}\label{centreMT2}
Assume that $K$ is neat.
Let $x=[s,\ol{1}]$ be a point of $\Sh_K(G,X)(L)$ and let $\rho_{\wt{x}}$ be the Galois representation
attached to $\wt{x}=[s,1]$. Let $M$ be the Mumford-Tate group of $s$. Let $H_{l}$ be 
the associated $l$-adic monodromy group then
\begin{equation}
Z_{M}^0\otimes \QQ_{l}\subset Z_{H_{l}}^0.
\end{equation}\label{MT1}
If $(G,X)$ is a sub-Shimura datum of 
$(\GSp_{2g},\HH_{g}^{\pm})$ (i.e if  $\Sh_K(G,X)$ is of Hodge type)
then
\begin{equation}
Z_{M}^0\otimes \QQ_{l}= Z_{H_{l}}^0.
\end{equation}
\end{cor}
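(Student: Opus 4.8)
The plan is to deduce Corollary \ref{centreMT2} from Corollary \ref{centreMT} and Proposition \ref{prop1}. First I would establish the inclusion $Z_M^0 \otimes \QQ_l \subset Z_{H_l}^0$. By Proposition \ref{prop1}, after replacing $L$ by $L' = L\cdot E(M,X_M)$ (which does not change $H_l^0$, only possibly passing to a finite-index subgroup, hence leaves the \emph{connected} monodromy group unchanged), we have $\rho_{\wt{x}}(\Gal(\ol L/L')) \subset M(\AAA_f)\cap K$, so $H_l \subset M_{\QQ_l}$. Now apply Corollary \ref{centreMT} (and its functoriality via Corollary above it) to the \emph{sub}-Shimura datum $(M,X_M)$: the point $x$ is Hodge generic in $\Sh_{K\cap M(\AAA_f)}(M,X_M)$, so $M$ plays the role of $G$ there, and Corollary \ref{centreMT} gives $Z_M^0 \otimes \QQ_l \subset H_l$ (this is exactly the last sentence of Corollary \ref{centreMT}, applied with $G$ replaced by $M$). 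Since $Z_M^0 \otimes \QQ_l$ is central in $M_{\QQ_l}$ and $H_l \subset M_{\QQ_l}$, the subgroup $Z_M^0 \otimes \QQ_l$ centralises $H_l$; being itself contained in $H_l$ and connected, it lands in $Z_{H_l}^0$. This gives \eqref{MT1}.

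For the reverse inclusion in the Hodge-type case, I would argue as follows. Assume $(G,X) \hookrightarrow (\GSp_{2g}, \HH_g^\pm)$, and fix the faithful symplectic representation $V_{\QQ_l}$. We must show $Z_{H_l}^0 \subset Z_M^0 \otimes \QQ_l$. The key point is that for Shimura data of Hodge type, $M = MT(s)$ is its own centraliser's complement in a precise sense: the Hodge structure on $V$ defined by $s$ has the property that $\End_{HS}(V_\QQ) = \End_M(V_\QQ)$ (this is Lemma \ref{lemme2.2}), and the Mumford-Tate conjecture's ``easy half'' gives $H_l^0 \subset M_{\QQ_l}$ with $H_l^0$ containing $M^{\der}_{\QQ_l}$'s image up to... — more directly, I would use that by the theory already developed, $Z_{H_l}^0$ is a torus in $M_{\QQ_l}$ commuting with $H_l^0 \supset$ (the derived part coming from $\theta^{ad}$). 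Concretely: $\theta^{ad}(Z_{H_l}^0)$ is central in $\theta^{ad}(H_l) = H^{ad}_l$, but for a Hodge-type datum the adjoint monodromy $H^{ad}_l$ has trivial centre is too strong; instead one uses that $Z_{H_l}^0$ centralises $H_l$, hence (since $H_l$ surjects onto $H^{ad}_l$ and, crucially, $M^{ad} = $ adjoint group acts with no fixed Hodge classes) $Z_{H_l}^0$ maps to the centre of $M^{ad}_{\QQ_l}$, which is trivial. Therefore $Z_{H_l}^0 \subset \ker(\theta^{ad})_{\QQ_l} = Z_G^0 \otimes \QQ_l \subset Z_M^0 \otimes \QQ_l$, but we want it inside $Z_M^0$, not just $Z_G^0$ — so one refines using that $Z_{H_l}^0$ also lies in $M_{\QQ_l}$ (from \eqref{MT1}'s proof) and $Z_G^0 \otimes \QQ_l \cap M_{\QQ_l}$ pinned down via $Z_G^0 \subset Z_M^0$ (as used in the proof of Proposition \ref{prop1}).

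More carefully, the clean argument for equality is: $Z_{H_l}^0 \subset M_{\QQ_l}$ and $Z_{H_l}^0$ centralises $H_l$; I claim $Z_{H_l}^0$ is central in $M_{\QQ_l}$. Indeed $H_l$ is Zariski-dense-image of Galois, and by Faltings/Bogomolov-type reasoning — or rather, just by the Hodge-theoretic input — $M_{\QQ_l}$ is generated by $H_l^0$ together with $Z_M^0\otimes\QQ_l$; since $Z_{H_l}^0$ centralises both ($H_l^0$ by hypothesis, $Z_M^0 \otimes \QQ_l$ because the latter is central in $M_{\QQ_l}$ by \eqref{MT1}), it is central in $M_{\QQ_l}$, i.e. $Z_{H_l}^0 \subset Z_{M_{\QQ_l}} = Z_M \otimes \QQ_l$; being connected, $Z_{H_l}^0 \subset Z_M^0 \otimes \QQ_l$. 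Combined with \eqref{MT1} this yields equality.

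\textbf{Main obstacle.} The delicate point is justifying that $M_{\QQ_l}$ is generated by $H_l^0$ and $Z_M^0 \otimes \QQ_l$ — equivalently that $\theta^{ad}$ maps $H_l$ onto $M^{ad}_{\QQ_l}$ (so that $H_l$ is ``as big as possible modulo the centre''). For Hodge type this follows because $M^{ad}$ acts on $V$ with the adjoint Hodge structure having no nonzero Hodge-invariant vectors together with Corollary \ref{centreMT} applied to $(M, X_M)$: there $\theta^{ab}(H_l) = M^{ab}_{\QQ_l}$ is all of the abelian part, and one must promote this, using semisimplicity of the $l$-adic representation and the structure of $M$ as almost-direct product $Z_M^0 M^{\der}$, to control the derived part. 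This is precisely the content that makes the Hodge-type case special, and it is where one invokes that the $l$-adic monodromy group's image in $M^{ad}$ is not contained in a proper subgroup — a fact that, for general Shimura data, is exactly the Mumford-Tate conjecture and hence unavailable, explaining why equality is only asserted in the Hodge-type case.
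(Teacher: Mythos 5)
Your proof of the inclusion $Z_{M}^0\otimes \QQ_{l}\subset Z_{H_{l}}^0$ is correct and is the same as the paper's: reduce to $(M,X_M)$ via Proposition \ref{prop1}, get $Z_M^0\otimes\QQ_l\subset H_l$ from Corollary \ref{centreMT}, and use centrality in $M_{\QQ_l}$ plus connectedness.

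The Hodge-type equality, however, has a genuine gap. Your ``clean argument'' rests on the claim that $M_{\QQ_l}$ is generated by $H_l^0$ and $Z_M^0\otimes\QQ_l$, equivalently that $H_l$ surjects onto $M^{\ad}_{\QQ_l}$. For an abelian variety this is precisely the (open) Mumford-Tate conjecture modulo centre: nothing weaker. Your attempted justification --- that $\theta^{\ab}(H_l)=M^{\ab}_{\QQ_l}$ together with the absence of Hodge-invariant vectors for the adjoint action --- cannot deliver it, since fullness of the abelianised image gives no control whatsoever on the image in $M^{\ad}$; the same remark kills the earlier variant where you want $Z_{H_l}^0$ to map into the centre of $M^{\ad}_{\QQ_l}$ (that step already presupposes the image of $H_l$ in $M^{\ad}$ is dense). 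You even flag this as the ``main obstacle'' but never resolve it, so the proof of $Z_{H_l}^0\subset Z_M^0\otimes\QQ_l$ is not established.

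The paper's route is different and much weaker in its input: in the Hodge-type case $M$ and $H_l$ are the Mumford-Tate and $l$-adic monodromy groups of an abelian variety $A/L$, and Faltings' theorem (the Tate isogeny conjecture, recalled in the introduction) gives
$$
\End_L(A)\otimes\ZZ_l=\End_{\Gal(\ol L/L)}(T_l(A))=\End_{H_l}(T_l(A))=\End_{M\otimes\QQ_l}(V_l(A)).
$$
Hence the centralisers of $H_l$ and of $M_{\QQ_l}$ in $\GL(V_l(A))$ coincide; since $Z_{H_l}^0$ centralises $H_l$ and lies in $M_{\QQ_l}$, it lies in $Z_M\otimes\QQ_l$, and by connectedness in $Z_M^0\otimes\QQ_l$. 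Note this only needs equality of endomorphism algebras (a theorem), not maximality of the monodromy image (a conjecture); you mention ``Faltings/Bogomolov-type reasoning'' in passing but then explicitly discard it in favour of the generation claim, which is exactly the wrong trade.
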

\begin{proof}
Using proposition \ref{prop1} and it's proof we may assume that $(G,X)=(M, X_{M})$
and the inclusion $Z_{M}^0\otimes \QQ_{l}\subset Z_{H_{l}}^0$ is a consequence
of the corollary \ref{centreMT}. If moreover $(G,X)\subset (\GSp_{2g},\HH_{g}^{\pm})$,
then $M$ and $H_{l}$ are the Mumford-Tate group and the $l$-adic monodromy group
of an abelian variety $A$ over a number field $L$. By Faltings result on the Tate conjecture \cite{Fa}
$$
\End_{L}(A)\otimes \ZZ_{l}=\End_{\Gal(\ol L/L)}(T_{l}(A))=\End_{H_{l}}(T_{l}(A))=\End_{M\otimes \QQ_{l}}(V_{l}(A)).
$$
Therefore $Z_{H_{l}}^0\subset Z_{M}^0\otimes \QQ_{l}$.
\end{proof}

\section{Galois action on a  Hecke orbit.}

Let $x = [s, \ol{1}]$ be a point of $\Sh_K(G,X)(L)$ (as before, in particular recall that $K$ is neat) and let $M$ be the
Mumford-Tate group of $s$. We assume that $L$ contains $E(M,X_M)$ so that
the conclusion of \ref{prop1} holds.
In this section we study how $\Gal(\ol L/L)$ acts on a $L$-rational point in the
generalised Hecke orbit of $x$. 

Let $y \in \Sh_K(G,X)(L)$ in the generalised Hecke orbit of $x$.
Let $g\in G(\QQ)$ such that $y:=[s,\ol{g}]\in  \Sh_K(G,X)(L)$ and
let  $\wt{y} = [s,g] \in \pi^{-1}(y)$. 

As in section 2.2, by applying lemma \ref{superlemma} to our situation, we obtain
 the representation
 $\rho_{\wt{y}} \colon \Gal(\ol L/L) \lto K$
having the property that for $\sigma\in \Gal(\ol L/L)$ and $k\in K$, we have
$$
\sigma([s,gk]) = [s, g k\rho_{\wt{y}}(\sigma)]
$$

\begin{teo} \label{teo1}
Let $x=[s,\ol{1}]$ be a point of $ \Sh_K(G,X)(L)$.
Then, for any $\sigma$ in $\Gal(\ol L/L)$, there 
exists an element $c_\sigma$ of finite order in $Z_{M}^0(\QQ)$ such that
$$
\rho_{\wt{y}}(\sigma) = c_{\sigma} g^{-1} \rho_{\wt{x}} g
$$
where $\wt{x} = [s,1]$.
Furthermore, the set $\{ c_\sigma : \sigma \in \Gal(\ol L/L) \}$ is a finite group
of uniformly bounded order (i.e not depending on $\wt{x}$, $\wt{y}$).
\end{teo}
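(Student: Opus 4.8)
The plan is to compare the two Galois representations $\rho_{\wt{x}}$ and $\rho_{\wt{y}}$ directly through the point $\wt{y}=[s,g]$. First I would observe that, since $\wt{y}$ and $\wt{x}$ lie over points of the same Hecke orbit and $s$ is the same in both, the action of $\sigma\in\Gal(\ol L/L)$ on $\wt{y}$ can be computed in two ways: on the one hand $\sigma(\wt{y})=[s,g\rho_{\wt{y}}(\sigma)]$ by definition of $\rho_{\wt{y}}$; on the other hand, the Galois action commutes with the Hecke correspondence given by $g$ (more precisely with left multiplication by $g\in G(\QQ)$ on $\Sh(G,X)$, which is defined over $\QQ$ hence over $L$), so $\sigma(\wt{y})=\sigma(g\cdot\wt{x})=g\cdot\sigma(\wt{x})=g\cdot[s,\rho_{\wt{x}}(\sigma)]=[s,g\rho_{\wt{x}}(\sigma)]$. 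Comparing the two expressions gives $[s,g\rho_{\wt{y}}(\sigma)]=[s,g\rho_{\wt{x}}(\sigma)]$ in $\Sh(G,X)(\CC)$, hence there is $q_\sigma\in G(\QQ)$ with $q_\sigma s=s$ and $g\rho_{\wt{y}}(\sigma)=q_\sigma g\rho_{\wt{x}}(\sigma)$, i.e. $\rho_{\wt{y}}(\sigma)=g^{-1}q_\sigma g\,\rho_{\wt{x}}(\sigma)$. Setting $c_\sigma:=g^{-1}q_\sigma g$, I need to show $c_\sigma$ lies in $Z_M^0(\QQ)$, has finite order, and that these orders are uniformly bounded.

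The next step is to locate $q_\sigma$ precisely. Since $q_\sigma s=s$ we have $q_\sigma\in G(\QQ)\cap Z_{G_\RR}(s(\SSS))(\RR)=C(\QQ)$ by Lemma \ref{lemme2.2}, where $C=Z_G(M)$. Now $\rho_{\wt{y}}(\sigma)\in K$ and, after enlarging $L$ to contain $E(M,X_M)$ as we have assumed, $\rho_{\wt{x}}(\sigma)\in M(\AAA_f)\cap K$ by Proposition \ref{prop1}. From $g\rho_{\wt{y}}(\sigma)=q_\sigma g\rho_{\wt{x}}(\sigma)$ we get $q_\sigma=g\rho_{\wt{y}}(\sigma)\rho_{\wt{x}}(\sigma)^{-1}g^{-1}$, so $q_\sigma\in g K g^{-1}\cdot gM(\AAA_f)g^{-1}$; more usefully, I would argue that $q_\sigma\in C(\QQ)\cap (g K g^{-1})M(\AAA_f)$ and then refine as follows. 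Apply the functoriality of the construction under $\theta^\ad:(G,X)\to(G^\ad,X^\ad)$: the adjoint image kills $C\supset Z_G$ but more precisely, since $M$ surjects onto its adjoint and $C=Z_G(M)$ centralises $M^\der$, one sees $C$ maps into $Z_{G^\ad}(M^\ad)$. I would combine this with Corollary \ref{centreMT} to pin $c_\sigma$ into $Z_M^0(\QQ)$: indeed $c_\sigma=g^{-1}q_\sigma g$ with $q_\sigma\in C(\QQ)$, and $c_\sigma\rho_{\wt{x}}(\sigma)=\rho_{\wt{y}}(\sigma)\in K\subset G(\AAA_f)$ together with $\rho_{\wt{x}}(\sigma)\in M(\AAA_f)$ forces $c_\sigma\in M(\AAA_f)$ as well; since $c_\sigma$ is rational and centralises $M$ (being conjugate to an element of $C(\QQ)$ that centralises $M$, and $g\in G(\QQ)$ normalises nothing relevant unless... here I must be careful: conjugation by $g$ need not preserve $M$), I would instead directly use that $\rho_{\wt{y}}(\sigma)$ and $\rho_{\wt{x}}(\sigma)$ both normalise/centralise $M$ appropriately. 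The cleanest route is: $c_\sigma\in M(\AAA_f)\cap C'(\QQ)$ where $C'$ is the centraliser of $g^{-1}Mg$, and the intersection $M\cap Z_G(g^{-1}Mg)$ is a $\QQ$-torus contained in $Z_M$; its identity component is $Z_M^0$-related, giving $c_\sigma\in Z_M^0(\QQ)$ up to a bounded-order issue which I fold into the finiteness statement below.

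For the finiteness and uniform bound, the key is Lemma \ref{lem2.2}: for the neat group $K$ one has $C(\QQ)\cap K=\{1\}$. I would argue that $q_\sigma$ lies in $C(\QQ)\cap (g K g^{-1}\cdot \text{(something with bounded image)})$; more concretely, since $\rho_{\wt{y}}(\sigma)\in K$ and $\rho_{\wt{x}}(\sigma)\in K$, we get $q_\sigma\in g K g^{-1}$ intersected with the set of elements of $C(\QQ)$ of the form $g k_1 k_2^{-1} g^{-1}$; the group generated by all $q_\sigma$ is a subgroup of $C(\QQ)$ contained in a fixed compact open subgroup $g'Kg'^{-1}$ (for a suitable $g'$, independent of $\sigma$ but depending on $g$), hence $\{q_\sigma\}$ generates a group $\Gamma$ with $\Gamma\subset C(\QQ)\cap(\text{compact open})$. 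Since $C=Z_G C'$ with $C'(\RR)$ compact and $Z_G(\QQ)$ discrete in $Z_G(\AAA_f)$, any subgroup of $C(\QQ)$ lying in a compact subgroup of $C(\AAA_f)$ is finite — this is exactly the argument used in the proof of Lemma \ref{lemme2.1} to show $C(\QQ)\cap K$ is trivial for neat $K$, and without neatness it gives finiteness with order bounded in terms of the finite group $(C(\QQ)\cap \widehat K_0)$ for any fixed maximal compact $\widehat K_0\supset K$. The $g$-conjugation only changes the compact subgroup within $C(\AAA_f)$ by an inner automorphism of $G(\AAA_f)$, and since all maximal compacts of $C(\AAA_f)$ have finite-order torsion bounded independently of the choice, the order of $\Gamma$ (hence of each $c_\sigma$) is bounded by a constant depending only on $(G,X)$ and $K$, not on $x,y,g$.

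The main obstacle is the last clause: showing $c_\sigma\in Z_M^0(\QQ)$ (rather than merely in $C(\QQ)$ or in $M(\AAA_f)\cap C'(\QQ)$) and that the bound on the order of $\{c_\sigma\}$ is genuinely uniform in $g$ and in the point. Controlling the $g$-conjugate of $K$ inside $C(\AAA_f)$ and extracting a torsion bound independent of $g$ is the delicate point — it requires knowing that, although $gKg^{-1}$ itself is not contained in any single fixed compact when $g$ varies over $G(\QQ)$, its intersection with the \emph{rational} group $C(\QQ)$ (which is what $q_\sigma$ lives in) is forced into the torsion part of $C(\AAA_f)$, whose exponent is bounded. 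I would handle this by working one prime at a time and using that $C^{\der}=C'$ has compact real points together with the finiteness of torsion in $C'(\QQ_\ell)$-lattices (bounded uniformly via Minkowski-type bounds for $\GL_N(\QQ_\ell)$), and for the central part $Z_G$ using discreteness of $Z_G(\QQ)$ in $Z_G(\AAA_f)$ directly, which is conjugation-invariant since $Z_G$ is central.
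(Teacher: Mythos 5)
Your opening step contains a genuine error. Left multiplication by $g$ on the adelic coordinate, $[s,h]\mapsto [s,gh]$, is not a well-defined map on $\Sh(G,X)=G(\QQ)\backslash X\times G(\AAA_f)$ (test it on another representative $(qs,qh)$, $q\in G(\QQ)$: you would need $qg=gq$), so it cannot be ``defined over $L$'' and there is no reason for it to commute with the Galois action. The operation that is defined over the reflex field is the \emph{right} Hecke action, $\wt{y}=\wt{x}\cdot g$, which gives $\sigma(\wt{y})=\sigma(\wt{x})\cdot g=[s,\rho_{\wt{x}}(\sigma)\,g]$, not $[s,g\,\rho_{\wt{x}}(\sigma)]$. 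Hence the identity you extract, $\rho_{\wt{y}}(\sigma)=g^{-1}q_\sigma g\,\rho_{\wt{x}}(\sigma)$ with $q_\sigma\in G(\QQ)$ fixing $s$, is unjustified (and in general false: compared with the correct relation it would force an adelic commutator $\rho_{\wt{x}}(\sigma)g\rho_{\wt{x}}(\sigma)^{-1}g^{-1}$ to be rational up to the centraliser). The correct comparison, which is how the paper begins, is $[s,g\rho_{\wt{y}}(\sigma)]=[s,\rho_{\wt{x}}(\sigma)g]$, whence by Lemma \ref{lemme2.2} there is $c_\sigma\in C(\QQ)=Z_G(M)(\QQ)$ with $c_\sigma\rho_{\wt{x}}(\sigma)g=g\rho_{\wt{y}}(\sigma)$.

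The two substantive points of the theorem are then precisely the ones you leave open. First, membership in the centre of $M$: you should not conjugate $C$ by $g$ and then try to compare $Z_G(g^{-1}Mg)$ with $Z_M$ (your claim that $M\cap Z_G(g^{-1}Mg)\subset Z_M$ is unsubstantiated, and you yourself flag it as the main obstacle). The paper instead applies Proposition \ref{prop1} to $\wt{y}=[s,g]=[g^{-1}s,1]$, whose Mumford--Tate group is $g^{-1}Mg$; this gives $\rho_{\wt{y}}(\Gal(\ol L/L))\subset g^{-1}M(\AAA_f)g\cap K$, hence $c_\sigma=g\rho_{\wt{y}}(\sigma)g^{-1}\rho_{\wt{x}}(\sigma)^{-1}\in K'_M\cdot K_M\subset M(\AAA_f)$, where $K_M=M(\AAA_f)\cap K$ and $K'_M=M(\AAA_f)\cap gKg^{-1}$, and then $C(\QQ)\cap M(\AAA_f)=Z_M(\QQ)$. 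Second, finiteness and the uniform bound: you assert without proof that the group generated by the $q_\sigma$ lies in a fixed compact open subgroup; lying in the compact \emph{set} $gKg^{-1}\cdot K$ does not give this. The paper first shows $\sigma\mapsto c_\sigma$ is a homomorphism (because $c_\sigma$ centralises $\rho_{\wt{x}}(\sigma')\in M(\AAA_f)$), so $\Theta=\{c_\sigma\}$ is a group; finiteness then follows from the finite decomposition $K'_M\cdot K_M=\coprod_i\alpha_iK_M$ together with the fact that each coset contains at most one element of $C(\QQ)$ (Lemma \ref{lem2.2}, neatness of $K$); and uniformity is immediate from Friedland's theorem: $\Theta$ is a finite subgroup of $\GL_n(\QQ)$ via a faithful rational representation, so its order is bounded in terms of $n$ alone, independently of $g$ and of the point --- which disposes of the delicate $g$-dependence you were trying to control with local Minkowski-type bounds. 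Since you establish neither the homomorphism property, nor $c_\sigma\in Z_M(\QQ)$, nor the finiteness, the proposal as written does not prove the theorem.
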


\begin{proof}
The action of $\rho_{\wt{y}}$ on $[s,g]$ is given by 
$$
\sigma([s,g]) = [s, g \rho_{\wt{y}}(\sigma)]
$$
On the other hand
$$
[s,g] = [s,1] \cdot g
$$
therefore, using the fact that Galois commutes with Hecke, we get
$$
\sigma([s,g]) = (\sigma[s,1]) \cdot g = [s, \rho_{\wt{x}}(\sigma) g]
$$
Hence we have 
$$
[s, g \rho_{\wt{y}}(\sigma)] = [s, \rho_{\wt{x}}(\sigma) g]
$$

We recall that  we defined in the last section $C:=Z_{G}(M)$. By Lemma \ref{lemme2.2}
there exists $c_\sigma \in C(\QQ)$ such that 
$$
c_\sigma \rho_{\wt{x}}(\sigma) g = g \rho_{\wt{y}}(\sigma)
$$

\begin{lem}
The set $\Theta:=\{c_\sigma, \sigma \in \Gal(\ol L/L)\}$ is a group.
\end{lem}
\begin{proof}
We will see that the $c_\sigma$ satisfy the following relation
$$
c_{\sigma\sigma'} = c_\sigma c_{\sigma'}
$$
for all $\sigma, \sigma' \in \Gal(\ol L/L)$.
By definition $c_\sigma = g \rho_{\wt{y}}(\sigma) g^{-1} \rho_{\wt{x}}(\sigma)^{-1}$.
An easy calculation shows that 
$$
c_{\sigma\sigma'}= c_{\sigma} \rho_{\wt{x}}(\sigma) c_{\sigma'} \rho_{\wt{x}}(\sigma)^{-1}
$$
By the proposition \ref{prop1}, the image of $\rho_{\wt{x}}$ is contained in $M(\AAA_f)$.
The fact that $c_\sigma$ centralizes $M(\AAA_f)$ implies that $c_{\sigma \sigma'} = c_{\sigma} c_{\sigma'}$.

It is obvious that $c_1 = 1$, therefore the lemma is proved.
\end{proof}
 
 \begin{lem}
We have the following properties:
\begin{enumerate} 
\item 
Let $K_{M}=M(\AAA_{f})\cap K$ and $K'_{M}=M(\AAA_{f})\cap gKg^{-1}$.
Then $c_{\sigma}\in K'_{M}\cdot K_{M}$.
\item  $c_{\sigma}\in Z_{M}(\QQ)$.
As a consequence $\Theta$ is an abelian group.
\end{enumerate}
 \end{lem}
 \begin{proof}
 As $\wt{y}=[s,g]=[g^{-1}s,1]$ and as the Mumford-Tate group of $g^{-1}.s$
 is $g^{-1}Mg$,  proposition \ref{prop1} implies that 
 $$
 \rho_{\wt{y}} (\Gal(\oL/L))\subset g^{-1}M(\AAA_{f}) g\cap K.
 $$ 
As $c_\sigma = g \rho_{\wt{y}}(\sigma) g^{-1} \rho_{\wt{x}}(\sigma)^{-1}$, we see that 
$c_{\sigma}\in K'_{M}\cdot K_{M}\subset M(\AAA_{f})$. Therefore
$c_{\sigma}\in Z_{G}(M)(\QQ)\cap M(\AAA_{f})=Z_{M}(\QQ)$.
 \end{proof}

We have a finite decomposition
$$
K'_{M} \cdot K_M = \coprod_{i\in I} \alpha_i K_M
$$
for some $\alpha_i \in  K'_M $.

\begin{lem}
For each $i$, $\alpha_i K \cap C(\QQ)$ (and in particular  $\alpha_i K_M \cap C(\QQ)$) has at most one element.
\end{lem}
\begin{proof}
Let $c = \alpha_i k$ and  $c' = \alpha_i k'$ be two elements of $C(\QQ)$.
Then 
$$
{c'}^{-1} c = \alpha_i^{-1}{k'}^{-1} k \alpha_i \in C(\QQ)\cap \alpha_i^{-1} K \alpha_i
$$
Then lemma \ref{lem2.2} implies that $c = c'$.
\end{proof}


%

We now know that $\Theta$ is a finite subgroup of $Z_{M}(\QQ)$. 
As $K$ is neat, $K_{M}$ is neat and   $Z_{G}(\QQ)\cap K_{M}=\{1\}$. Therefore
if $c_{\sigma}\neq c_{\sigma'}$ then 
$$c_{\sigma}K_{M}\cap c_{\sigma'}K_{M}=\emptyset.$$
Therefore $\Theta K_{M}$ is a compact open subgroup of $M(\AAA_{f})$
and 
$$[K_{M}:\Theta K_{M}]=\vert \Theta\vert.$$

Choose a faithful representation $G \hookrightarrow \GL_n$. Then $\Theta$ is a finite
subgroup of $\GL_n(\QQ)$ and therefore its order is bounded in terms of $n$ only
(see thm 3 of \cite{Friedland}).
\end{proof}

\section{Tate structures.}
\subsection{Definitions.}
In this section we define the notions of $(\infty, l)$-integral structures and Tate structures. 
These generalise the data naturally attached to abelian varieties over number fields.
A point on a Shimura variety and a faithful representation of the corresponding group naturally give rise to such a $(\infty, l)$-integral structure.
We then state the Isogeny and generalised Mumford-Tate conjectures and we show that they are equivalent.
Recall the following definition given in the introduction.

\begin{defi} [$(\infty,l)$-integral structure]
Let $L$ be a number field, $V_{\QQ}$ a finite dimensional $\QQ$-vector space
and $l$ a prime number.
An $(\infty, l)$-integral structure $(V_{\ZZ},s,\rho)$ on $V_{\QQ}$ is the following data:
\begin{enumerate}
\item A lattice $V_{\ZZ}$ in $V_{\QQ}$.
\item A polarised $\ZZ$-Hodge structure ($\ZZ$-PHS) $s \colon \SSS \lto \GL(V_{\RR})$.
\item An $l$-adic Galois representation $\rho \colon \Gal(\ol L/L)= \Gal(\ol\QQ/L)\lto \GL(V_{\ZZ_{l}})$.
\end{enumerate}

Furthermore, we require the following compatibility condition
$$
\End_{\ZZ-HS}(V_{\ZZ})\otimes \ZZ_{l} \subset \End_{\Gal(\ol L/L)}(V_{\ZZ_{l}})
$$
An $(\infty,l)$-integral structure  $(V_{\ZZ},s,\rho)$ is called Tate if  
 the representation $\rho \otimes \QQ_l$ is semi-simple and

$$
\End_{\ZZ-HS}(V_{\ZZ})\otimes \ZZ_{l} = \End_{\Gal(\ol L/L)}(V_{\ZZ_{l}})
$$

We define the notion of l $(\infty,l)$-rational structure on a finite dimentional $\QQ$-vector space
$V_{\QQ}$ in a completely analogous manner.
\end{defi}

Let $(V_{\ZZ},s,\rho)$ be an $(\infty,l)$-integral structure. We let $G_l$ be the Zariski closure 
of the image of $\rho\otimes \QQ_{l}$ in $\GL(V_{\QQ_l})$ and let $G_l^0$ be the neutral component of $G_l$.
Then $\rho$ induces a representation
$$
\rho\colon \Gal(\ol L/L)\lto G_l \lto G_l/G_l^0
$$
As the group $G_l/G_l^0$ is finite , after replacing $L$ by a finite extension, we can assume that $G_l$ is connected.
We will always  make this assumption in what follows. 

The Mumford-Tate group $M$ of a $(V_{\ZZ},s,\rho)$ is defined as the smallest subgroup
$H$ of $\GL(V_{\QQ})$ such that $s$ factors through $H_{\RR}$.

Note that if $A$ is an abelian variety over $L$ and $V_{\ZZ} = H_1(A,\ZZ)$,
then we obtain a $(\infty, l)$-integral structure as follows:
$s$ is the Hodge structure naturally attached to $A$ and
$\rho \colon \Gal(\ol L/L) \lto \GL(V_{\ZZ_l})$ is the Galois representation on the Tate module
$T_lA = V_{\ZZ_l}$ attached to $A$. Then
$(V_{\ZZ}, s, \rho)$ is an $(\infty, l)$-integral structure 
which is Tate by the theorems of Faltings (\cite{Fa} and the introduction).

\begin{defi} 
Suppose that $L$ and $l$ are fixed.
An isogeny of  $(\infty,l)$-integral structures
$(V_{\ZZ},s,\rho)$ and $(V'_{\ZZ},s',\rho')$ with $V_{\ZZ}\otimes \QQ = V'_{\ZZ}\otimes\QQ$, 
is a  $\alpha\in \GL(V_{\QQ})$ such that  $\alpha\colon V_{\ZZ}\lto V'_{\ZZ}$
is a  morphism of  $\ZZ$-HS and 
such that 
$\alpha_{l} = \alpha\otimes\ZZ_{l}$ is a morphism of Galois representations.
We say that $\alpha$ is an $l$-isogeny if moreover $\vert V'_{\ZZ}/\alpha V_{\ZZ}\vert=l^n$
for some integer $n$.
\end{defi}

\begin{defi}
Let $(W_{\ZZ},s,\rho)$ be a $(\infty,l)$-integral structure on $V_{\QQ}$ (i.e $W_\QQ = V_\QQ$).
Then $(W_{\QQ}=V_{\QQ},s,\rho\otimes\QQ_{l} )$ is a
$(\infty,l)$-rational  structure on $V_{\QQ}$ called the extension 
of $W_{\ZZ}$ to $\QQ$.

We say that two $(\infty,l)$-integral structures are isogeneous if their extensions to $\QQ$
are isomorphic. Two $(\infty,l)$-integral structures are isogeneous if and only if there exists
an isogeny between them. 
\end{defi}

Note that, by a theorem of Faltings, two abelian varieties $A$ and $B$ over $L$ are isogeneous if and only if the corresponding
$(\infty,l)$-integral structures are isogeneous.

The main objective of this paper is the study of the $(\infty,l)$-integral structures associated to 
$\ol\QQ$-valued points of Shimura varieties and representations of 
appropriate groups.

Let $(G,X)$ be a Shimura datum and $K\subset G(\AAA_{f})$ a neat compact open
subgroup of the form $K = K_{l} \times K^l$ with $K_{l}\subset G(\QQ_{l})$
a compact open subgroup.

Fix a representation $\mu\colon G\lto \GL(V_{\QQ})$ and a lattice 
$V_{\ZZ}\subset V_{\QQ}$ such that $K_{l}\subset \GL(V_{\ZZ_{l}})$.

\begin{lem}
Let $x = [s,\ol{1}]$ be a point in $Sh_{K}(G,X)(L)$.
Fix the point $\wt{x} = [s,1]$  of $Sh_{K^l}(G,X)$ over $x$.
Let $\rho_{\wt{x},l}$ be the $l$-adic representation attached to this data.

Then $(V_{\ZZ}, \mu\circ s, \mu \circ \rho_{\wt{x},l})$ is an 
$(\infty,l)$-integral structure. We call such a structure a structure of Shimura type.
\end{lem}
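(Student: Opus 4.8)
The plan is to verify the three items in the definition of $(\infty,l)$-integral structure together with the compatibility condition, each of which is almost immediate from the setup. First I would note that $V_\ZZ$ is a lattice in $V_\QQ$ by hypothesis, so item (1) holds trivially. For item (2), the morphism $s\colon \SSS\to G_\RR$ composed with $\mu\colon G\to \GL(V_\QQ)$ gives $\mu\circ s\colon \SSS\to \GL(V_\RR)$, which defines a $\ZZ$-Hodge structure on $V_\ZZ$; the fact that it is polarisable follows from the fact that $(G,X)$ is a Shimura datum (a point of $X$ yields a polarisable Hodge structure on any representation, by \cite{De2} 1.1.11--14 as already invoked in the proof of Lemma \ref{lemme2.2}), so $\mu\circ s$ is a $\ZZ$-PHS. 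For item (3), we have $\rho_{\wt{x},l}\colon \Gal(\ol L/L)\to K_l\subset \GL(V_{\ZZ_l})$ by the hypothesis $K_l\subset \GL(V_{\ZZ_l})$, so composing with $\mu$ (which is defined over $\QQ$, hence over $\ZZ_l$ after the choice of lattice) gives an $l$-adic Galois representation with values in $\GL(V_{\ZZ_l})$.

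The only point requiring an argument is the compatibility relation
$$
\End_{\ZZ\text{-}HS}(V_{\ZZ})\otimes \ZZ_{l} \subset \End_{\Gal(\ol L/L)}(V_{\ZZ_{l}}).
$$
Here I would argue as follows. Both sides are computed inside $\End(V_{\ZZ_l})$. By Lemma \ref{lemme2.2} (applied with the faithful representation $\mu$, which we may assume faithful after replacing $G$ by its image, or more directly by the argument in that proof), an endomorphism of $V_\QQ$ is an endomorphism of the Hodge structure $\mu\circ s$ precisely when it commutes with $s(\SSS)$, equivalently with $M_\RR$ where $M=MT(s)$; thus $\End_{\ZZ\text{-}HS}(V_\ZZ)\otimes\QQ = \End_{M}(V_\QQ)$. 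On the other hand, by Proposition \ref{prop1}, enlarging $L$ to contain $E(M,X_M)$ (which we are free to do, since enlarging $L$ only shrinks the Galois image and hence enlarges the right-hand side), the image of $\rho_{\wt{x},l}$, and hence of $\mu\circ\rho_{\wt{x},l}$, is contained in $M(\QQ_l)$. Therefore every element of $\End_M(V_\QQ)\otimes\QQ_l = \End_{M_{\QQ_l}}(V_{\QQ_l})$ commutes with the Galois image, so it lies in $\End_{\Gal(\ol L/L)}(V_{\QQ_l})$. Intersecting with $\End(V_{\ZZ_l})$: if $f\in \End_{\ZZ\text{-}HS}(V_\ZZ)\otimes\ZZ_l$ then $f$ preserves $V_{\ZZ_l}$ and $f\otimes\QQ_l$ commutes with the Galois action, so $f$ itself commutes with the Galois action on $V_{\ZZ_l}$, giving $f\in \End_{\Gal(\ol L/L)}(V_{\ZZ_l})$.

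I expect no serious obstacle here: the statement is essentially a bookkeeping consequence of Proposition \ref{prop1} and Lemma \ref{lemme2.2}. The one mild subtlety worth flagging explicitly is the dependence on $L$: the notion of $(\infty,l)$-integral structure is stated over a fixed $L$, and the compatibility inclusion requires $L\supseteq E(M,X_M)$ for Proposition \ref{prop1} to apply; so strictly speaking the lemma should be read as asserting that the data is an $(\infty,l)$-integral structure \emph{over such an $L$} (or one remarks that replacing $L$ by a finite extension, as is in any case done systematically in Section 4 to make $G_l$ connected, preserves all the structure and only makes the compatibility easier). A second routine point is that to invoke Lemma \ref{lemme2.2} one wants $\mu$ faithful; if $\mu$ is not faithful one replaces $G$ by $\mu(G)$ and $s$ by its image, which does not change $\mu\circ s$ or $\mu\circ\rho_{\wt{x},l}$, and the Mumford-Tate group of the image Hodge structure is $\mu(M)$, so the same conclusion holds.
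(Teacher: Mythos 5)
Your proof is correct and takes essentially the same route as the paper: identify $\End_{\ZZ\text{-}HS}(V_{\ZZ})$ with the $\mu(M)$-equivariant endomorphisms (the Mumford--Tate group of $\mu\circ s$ being $\mu(M)$) and invoke Proposition \ref{prop1} to place the image of $\mu\circ\rho_{\wt{x},l}$ inside $\mu(M)(\ZZ_l)$, from which the compatibility inclusion is immediate. The field-of-definition caveat you flag (needing $L$ to contain $E(M,X_M)$ for Proposition \ref{prop1}) is left implicit in the paper, and your handling of it is consistent with how the paper uses the statement.
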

\begin{proof}
Let $M$ be the Mumford-Tate group of $s$.
Then the Mumford-Tate group of $\mu\circ s$ is $\mu(M)$, therefore
$$
\End_{\ZZ-HS}(V_{\ZZ}) = \End_{\mu(M)}(V_{\ZZ})
$$
where, by abuse of notation, we denote 
$\End_{\mu(M)}(V_{\ZZ}) = \End(V_{\ZZ_l})\cap \End_{\mu(M)}(V_{\QQ_l})$.
On the other hand, the image of  $\mu \circ \rho_{\wt{x},l}$
is contained in $\mu(M)(\ZZ_{l})$ (by Proposition \ref{prop1}), 
the compatibility condition follows.
\end{proof}

We now state the Isogeny and Mumford-Tate conjectures.

\begin{conj}[Isogeny and generalised Mumford-Tate conjectures.]
Let $(V_{\ZZ}, \mu\circ s, \mu \circ \rho_{\wt{x},l})$ be an $(\infty, l)$-integral structure
of Shimura type.
Let $H_{\QQ_{l}}$ be the neutral component of the Zariski closure of the image of  $\rho_{\wt{x},l}$.
The Mumford-Tate conjecture asserts that $H_{\QQ_{l}} = M_{\QQ_{l}}$.

The Isogeny conjecture asserts that every $(\infty,l)$-integral structure of
Shimura type is Tate.
\end{conj}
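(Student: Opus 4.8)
The two assertions above --- the generalised Mumford--Tate conjecture $H_{\QQ_{l}}=M_{\QQ_{l}}$ and the Isogeny conjecture --- are open, and are not proved here; the purpose of this paper is to show, in sections 4 and 5, that they are equivalent to each other and to the generalised Shafarevich conjecture, and so to pin down exactly what is missing. We record nonetheless what is already unconditional and the lines along which one would hope to prove the conjectures themselves.

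Much of the Mumford--Tate statement is already available. Proposition \ref{prop1} gives the inclusion $H_{\QQ_{l}}\subseteq M_{\QQ_{l}}$, and Corollary \ref{centreMT2} gives $Z_{M}^{0}\otimes\QQ_{l}=Z_{H_{l}}^{0}$, with these connected centres equal on the nose when $(G,X)$ is of Hodge type. What is missing is thus the reverse inclusion on the semisimple part, $M^{\der}_{\QQ_{l}}\subseteq H_{l}$. This holds trivially when $(G,X)$ is a CM datum (Remark \ref{remCM}), and when $(G,X)$ is of Hodge type it is precisely the classical Mumford--Tate conjecture for the associated abelian variety, which is known in a broad range of cases (\cite{Se}, \cite{Ch}, \cite{Pi} and later work). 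By the elementary implication ``Mumford--Tate $\Rightarrow$ Isogeny'' proved in section 4 --- which uses only that $M$ is reductive and that $\End_{\ZZ-HS}(V_{\ZZ})$ is the commutant of the action of $M$ --- each such case of Mumford--Tate yields the corresponding case of the Isogeny conjecture; in particular the Isogeny conjecture for Hodge type is reduced to the classical Mumford--Tate conjecture.

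For a general $(\infty,l)$-integral structure of Shimura type --- not coming from an abelian variety nor from a Shimura datum of Hodge type --- the plan is to run the chain of equivalences of sections 4 and 5 and reduce to the \emph{generalised Shafarevich conjecture}: one has only to show that the isogeny class of such a structure contains finitely many isomorphism classes. One would then try to adapt Faltings' proof of the classical Shafarevich conjecture --- attach to each member of the isogeny class a Faltings-type arithmetic height, bound the heights that can occur in a fixed isogeny class (this is where Faltings' comparison of heights under isogenies, and the semistable reduction theory behind it, would enter), and finish by a Northcott argument. The one known case is Faltings' theorem itself, which handles the structures attached to points of $\cA_{g,n}$ relative to the \emph{standard} symplectic representation; what is new is to carry the argument through for an \emph{arbitrary} representation $\mu$ of $G$, and in particular for the tensor constructions on the standard one.

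The hard part, and the one flagged in the introduction, is exactly this last step. For a Shimura-type structure $(V_{\ZZ},\mu\circ s,\mu\circ\rho_{\wt{x},l})$ with $(G,X)$ not of Hodge type there is no underlying abelian variety on which to perform Faltings' Arakelov estimates, so one must build intrinsically, for the abstract datum $(V_{\ZZ},s,\rho)$, the notions of good reduction away from a finite set of places, of an integral model and its degenerations, and of a Faltings-type height, together with the crucial boundedness of that height along an isogeny class. Supplying these ingredients --- equivalently, proving \emph{any} of the three equivalent conjectures beyond the CM and Hodge-type cases --- is what remains; the present paper reduces the question to this and, via Theorem \ref{teo5.5} and Proposition \ref{prop5.8}, provides the group- and measure-theoretic input that makes the reduction ``Mumford--Tate $\Rightarrow$ Shafarevich'' go through.
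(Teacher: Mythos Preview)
Your reading is correct: the statement is a \emph{conjecture}, not a theorem, and the paper offers no proof of it --- it merely formulates the two conjectures and then, in the following subsection and in section 5, proves they are equivalent to one another and to the generalised Shafarevich conjecture. Your discussion of status and strategy is an appropriate response, and your summary of the chain of equivalences matches the paper.

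One small correction: you write that Corollary \ref{centreMT2} gives $Z_{M}^{0}\otimes\QQ_{l}=Z_{H_{l}}^{0}$ in general. The corollary only gives the inclusion $Z_{M}^{0}\otimes\QQ_{l}\subset Z_{H_{l}}^{0}$ in general; the equality is established only when $(G,X)$ is of Hodge type, and there it relies on Faltings' theorem. So the ``missing'' piece in the general case is not just the semisimple part $M^{\der}_{\QQ_{l}}\subset H_{l}$ but also the opposite inclusion $Z_{H_{l}}^{0}\subset Z_{M}^{0}\otimes\QQ_{l}$ on centres.
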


Notice, that for Shimura varieties of Hodge type, the generalised Mumford-Tate conjecture is actually the 
usual Mumford-Tate conjecture while the Isogeny conjecture is genuinely more general.
In what follows, when referring to the generalised Mumford-Tate conjecture, we will often drop the 
adjective `generalised' which, we hope, will cause no confusion.

\subsection{Isogeny and generalised Mumford-Tate conjectures are equivalent.}

We prove the following.

\begin{teo}
The generalised Mumford-Tate and Isogeny conjectures are equivalent.

More precisely, let $(V_{\ZZ}, s, \rho_{\wt{x},l})$ be the data attached, as before, to a
point $[x, \ol{1}]$ of a Shimura variety $\Sh_{K}(G,X)(L)$ where $L$ is a number field
and $\wt{x}=[s,1]$. 
Let $H_{\QQ_l}$ and $M_{\QQ_l}$ be as before.

Then for any representation $\mu$,
the $(\infty, l)$-structure $(V_{\ZZ}, \mu\circ s, \mu \circ \rho_{\wt{x},l})$ is Tate
if and only if
$$
H_{\QQ_l} = M_{\QQ_l}
$$
\end{teo}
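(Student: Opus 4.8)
The statement to prove is that, for the $(\infty,l)$-integral structure $(V_\ZZ,\mu\circ s,\mu\circ\rho_{\wt{x},l})$ of Shimura type attached to a point $x=[s,\ol 1]$ and a faithful representation $\mu\colon G\hookrightarrow\GL(V_\QQ)$, the structure is Tate if and only if $H_{\QQ_l}=M_{\QQ_l}$, where $H_{\QQ_l}$ is the neutral component of the Zariski closure of the image of $\rho_{\wt{x},l}$ and $M$ is the Mumford-Tate group of $s$. The plan is to prove the two implications separately, and in each case to rephrase the condition ``$(\infty,l)$-integral structure is Tate'' in terms of commutants of algebraic groups acting on $V_{\QQ_l}$. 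Throughout I will freely use Proposition \ref{prop1}, which gives $H_{\QQ_l}\subset M_{\QQ_l}$ (after enlarging $L$, which does not affect $M$ and only replaces $H_l$ by a finite-index open subgroup, hence does not change $H_{\QQ_l}$), and the fact that $M$ is reductive (being a Mumford-Tate group, it is the generic MT group of the sub-Shimura datum $(M,X_M)$, hence reductive by the Shimura datum axioms).

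\textbf{Step 1: translating ``Tate'' into commutant language.} First I would record the key dictionary. Since $\mu(M)$ is the Mumford-Tate group of $\mu\circ s$, one has, as in the proof of the lemma preceding the conjecture,
$$
\End_{\ZZ-HS}(V_\ZZ)\otimes\QQ_l=\End_{\mu(M)_{\QQ_l}}(V_{\QQ_l})=\End_{M_{\QQ_l}}(V_{\QQ_l}),
$$
using that $\mu$ is faithful. On the Galois side, $\End_{\Gal(\ol L/L)}(V_{\ZZ_l})\otimes\QQ_l$ is, by Zariski density, exactly $\End_{H_l}(V_{\QQ_l})=\End_{H_{\QQ_l}}(V_{\QQ_l})$ (passing to the neutral component is harmless after extending $L$). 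Thus the equality of commutants part of the Tate condition, tensored with $\QQ_l$, reads
$$
\End_{M_{\QQ_l}}(V_{\QQ_l})=\End_{H_{\QQ_l}}(V_{\QQ_l}),
$$
and one must additionally check that this $\QQ_l$-level equality, together with the lattice being ``saturated'', gives back the integral equality; but since $\End_{\ZZ-HS}(V_\ZZ)\otimes\ZZ_l$ is already $\ZZ_l$-saturated in $\End(V_{\ZZ_l})$ and contained in $\End_{\Gal}(V_{\ZZ_l})$, equality after $\otimes\QQ_l$ forces equality of the $\ZZ_l$-modules. So ``$(\infty,l)$-structure Tate'' $\iff$ ``$\rho_{\wt{x},l}\otimes\QQ_l$ semisimple and $\End_{M_{\QQ_l}}(V_{\QQ_l})=\End_{H_{\QQ_l}}(V_{\QQ_l})$''.

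\textbf{Step 2: Mumford-Tate $\Rightarrow$ Tate.} Suppose $H_{\QQ_l}=M_{\QQ_l}$. Then trivially $\End_{H_{\QQ_l}}(V_{\QQ_l})=\End_{M_{\QQ_l}}(V_{\QQ_l})$, giving the commutant equality. For semisimplicity: $M$ is reductive, hence $M_{\QQ_l}=H_{\QQ_l}$ is reductive, hence $V_{\QQ_l}$ is a semisimple $H_{\QQ_l}$-module; since the image of $\rho_{\wt{x},l}$ is Zariski-dense in $H_l$ with neutral component $H_{\QQ_l}$, semisimplicity of $V_{\QQ_l}$ as an $H_{\QQ_l}$-module gives semisimplicity as a module over the image of Galois (a finite-index subgroup issue again handled by extending $L$, or directly by Clifford theory since $H_{\QQ_l}$ is normal of finite index in $H_l$). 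This direction is the easy one, essentially a restatement.

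\textbf{Step 3: Tate $\Rightarrow$ Mumford-Tate — the real content, and where the faithfulness/``every representation'' hypothesis is used.} Assume the structure is Tate, so $\rho_{\wt{x},l}\otimes\QQ_l$ is semisimple and $\End_{H_{\QQ_l}}(V_{\QQ_l})=\End_{M_{\QQ_l}}(V_{\QQ_l})$, with $H_{\QQ_l}\subset M_{\QQ_l}$ and both reductive. The naive hope would be a double-commutant argument: semisimplicity of $V$ over $H_{\QQ_l}$ implies the $\QQ_l$-algebra generated by $H_{\QQ_l}$ in $\End(V_{\QQ_l})$ equals the bicommutant of $\End_{H_{\QQ_l}}(V_{\QQ_l})$, hence equals the bicommutant of $\End_{M_{\QQ_l}}(V_{\QQ_l})$, hence (since $M_{\QQ_l}$ is also reductive, so $V$ is semisimple over it too) equals the algebra generated by $M_{\QQ_l}$; but an equality of \emph{generated associative algebras} does not by itself give $H_{\QQ_l}=M_{\QQ_l}$ as algebraic \emph{groups}. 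This is precisely the gap that is closed by invoking the hypothesis for \emph{all} representations. The correct argument: for an arbitrary representation $\nu\colon G\to\GL(W_\QQ)$ with a lattice $W_\ZZ$, the Tate hypothesis applied to $(W_\ZZ,\nu\circ s,\nu\circ\rho_{\wt{x},l})$ gives $\End_{\nu(H_{\QQ_l})}(W_{\QQ_l})=\End_{\nu(M_{\QQ_l})}(W_{\QQ_l})$. Apply this with $W_\QQ$ ranging over all tensor constructions $V_\QQ^{\otimes a}\otimes(V_\QQ^\vee)^{\otimes b}$ (and direct sums thereof). A reductive subgroup $H_{\QQ_l}\subset M_{\QQ_l}$ is cut out inside $M_{\QQ_l}$ by the $M_{\QQ_l}$-submodules of such tensor spaces that it stabilizes (Chevalley: every algebraic subgroup is the stabilizer of a line in some tensor construction, and for the inclusion of reductive groups one uses that $H_{\QQ_l}$ equals the intersection of stabilizers of the $H_{\QQ_l}$-fixed tensors in all $W$). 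Concretely, $M_{\QQ_l}$-equivariant projectors onto $H_{\QQ_l}$-submodules of tensor spaces are elements of $\End_{M_{\QQ_l}}(W_{\QQ_l})$ that lie in $\End_{H_{\QQ_l}}(W_{\QQ_l})$; the hypothesis says these two commutant algebras coincide, so every $H_{\QQ_l}$-stable subspace of every tensor construction is already $M_{\QQ_l}$-stable, and every $H_{\QQ_l}$-invariant tensor is $M_{\QQ_l}$-invariant. Since $H_{\QQ_l}$ is the common stabilizer (in $M_{\QQ_l}$) of its invariant tensors/lines in tensor constructions, this forces $H_{\QQ_l}=M_{\QQ_l}$.

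\textbf{The main obstacle.} The crux is Step 3: making rigorous the passage from ``equality of commutant algebras in every tensor representation'' to ``equality of algebraic groups''. The clean way is a Tannakian / Chevalley argument — one shows that the full subcategory of $\mathrm{Rep}(M_{\QQ_l})$ on which $H_{\QQ_l}$ and $M_{\QQ_l}$ have the same endomorphisms is a tensor subcategory stable under all constructions, contains a faithful object (namely $V_{\QQ_l}$, via $\mu$), hence is all of $\mathrm{Rep}(M_{\QQ_l})$, and therefore the Tannakian group of the fibre functor restricted along $H_{\QQ_l}\hookrightarrow M_{\QQ_l}$ is all of $M_{\QQ_l}$, i.e.\ $H_{\QQ_l}=M_{\QQ_l}$. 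One must be a little careful that the Tate property is genuinely available for \emph{all} these auxiliary representations — but that is exactly the strength of the Isogeny conjecture as formulated (it quantifies over every $\mu$), so there is no circularity: if one only knew it for the standard symplectic representation (Faltings), this last step would fail, which is the point the introduction emphasizes.
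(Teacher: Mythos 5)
Your proposal is correct and follows essentially the same route as the paper: the easy direction via reductivity of $M$, and the converse via Chevalley's theorem combined with semisimplicity and the commutant equality taken over \emph{all} representations (the paper isolates this as a lemma on reductive $H\subset M\subset G$ with equal centralisers in every representation, concluding by exhibiting a block-diagonal element centralising $H$ but not $M$, whereas you conclude by showing the $M$-equivariance of the projector onto the Chevalley line — the same argument in Tannakian clothing).
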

\begin{proof}
Let $(V_{\ZZ},s,\rho)$ be a $(\infty,l)$-integral structure.
Assume the Mumford-Tate conjecture. As $M$ is a reductive group,
the Mumford-Tate conjecture implies that  
neutral component of the Zariski closure of the image of  $\rho_{\wt{x},l}$
is reductive. Therefore $\rho_{\wt{x},l}$ is semi-simple.
As 
$$
\End_{\Gal(\ol L/L)}(V_{\ZZ_l}) = \End_{{\mu \rho(\Gal(\ol L/L))}^{\rm Zar}}(V_{\ZZ_l}) = \End_{\mu(M)}(V_{\ZZ_l}) = \End_{\ZZ-HS}(V_{\ZZ})\otimes\ZZ_l,
$$
the $(\infty,l)$-integral structure $(V_{\ZZ},s,\rho)$ is Tate.

The fact that the Isogeny conjecture  implies the Mumford-Tate conjecture 
will result from the following lemma 

\begin{lem}
Let $H\subset M\subset G$ be the inclusions of connected reductive algebraic groups over
$\QQ_{l}$. 
Suppose that for every finite dimensional representation $\mu\colon G\lto \GL(V_{\QQ_{l}})$,
the following equality of centralisers holds:
$$
Z_{\GL(V_{\QQ_{l}})}(\mu(H)) = Z_{\GL(V_{\QQ_{l}})}(\mu(M))
$$
then $M=H$.
\end{lem}
\begin{proof}
By a theorem of Chevalley, there exists a representation $\mu \colon G_{\QQ_{l}}\lto \GL(V_{\QQ_{l}})$
and a vector $x \in V_{\QQ_{l}}$
such that  $H$ is the stabiliser of the line $\QQ_{l}\cdot x$.

Suppose that $M\not= H$, then $W = M(\QQ_{l})\cdot x$ is an irreducible $M$-module of
dimension $>1$.
Using semi-simplicity of the representations of $M$, we write
$$
V = W \oplus W'
$$   
as $M$-sub-modules.
We note that $W$ and $W'$ are $H$-modules and we 
have a decomposition into $H$-modules (using semi-simplicity of representations of $H$):
$$
V_{\QQ_{l}} = \QQ_{l}\cdot x \oplus W_{1} \oplus W'
$$
A block diagonal matrix with respect to this decomposition is in the centraliser of $H$
but not in the centraliser of $M$ by construction.
This gives a contradiction.
\end{proof}

Let $H$ be the Zariski closure of $\rho(\Gal(\ol L/L))$.
The semi-simplicity of $\rho$ implies that $H$ is a reductive subgroup of $M$.
The generalised Tate conjecture implies that 
$$
Z_{ \GL( V_{\QQ_l} ) } (\mu(H)) = Z_{ \GL( V_{\QQ_l} )} (\mu(M))
$$
for all representations $\mu$.
By applying the lemma to $H\subset M$, we conclude that $H=M$ i.e. the Mumford-Tate conjecture holds.

\end{proof}

\begin{rem}
Note that it is not possible to deduce (at least directly) the Mumford-Tate conjecture from  the Tate isogeny conjecture for abelian varieties proved by Faltings.
The point here being that in order to deduce Mumford-Tate we need the Isogeny conjecture to hold for 
$(\infty, l)$-integral structures attached to  \emph{all} representations of the symplectic group. Faltings only proves it for
the natural symplectic representation.
\end{rem}

\section{Generalised Shafarevich conjecture.}

In this section we state the generalised Shafarevich conjecture
and prove that it is equivalent to the Isogeny conjecture.
One implication is standard and is essentially due to Tate.
For other implication, we show that generalised Mumford-Tate
implies the generalised Shafarevich conjecture. This
involves some non-trivial group-theoretic and measure-theoretic 
arguments.

 We consider a Shimura variety $Sh_{K}(G,X)$, 
 a finite dimensional representation $V_{\QQ}$ of $G$ and a lattice 
 $V_{\ZZ}$ of $V_{\QQ}$
 as in the previous section.

\begin{conj} [generalised Shafarevich conjecture]
Assume that $K=K^lK_{l}$ is a neat compact open subgroup of $G(\AAA_{f})$.
Let $x = [s,\ol{1}]$ be a point of $Sh_{K}(G,X)(L)$ and fix $\wt{x} = [s,1]$
the point of $Sh_{K^l}(G,X)$ above $x$.
Let $(V_{\ZZ}, s, \rho_{\wt{x},l})$ be the corresponding $(\infty,l)$-integral structure
of Shimura type.

The set of $(\infty,l)$-integral structures of Shimura type $l$-isogeneous
to $(V_{\ZZ}, s, \rho_{\wt{x},l})$ forms a finite number of isomorphism classes.
\end{conj}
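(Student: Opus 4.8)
The plan is to reduce the generalised Shafarevich conjecture to a finiteness statement about lattices stable under a compact $p$-adic group, using the fact that an $l$-isogeny only modifies the $(\infty,l)$-integral structure ``at $l$''. First I would observe that if $(V'_{\ZZ},s',\rho')$ is $l$-isogeneous to $(V_{\ZZ},s,\rho_{\wt{x},l})$ via $\alpha\in\GL(V_{\QQ})$ with $|V'_{\ZZ}/\alpha V_{\ZZ}|=l^n$, then $\alpha$ is an isomorphism of the prime-to-$l$ lattices and of the real Hodge structures, so the only data that genuinely varies is the $\ZZ_l$-lattice $\alpha V_{\ZZ_l}\subset V_{\QQ_l}$ together with the $\ZZ_l$-structure on $\rho_{\wt{x},l}$. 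In particular the image of $\rho_{\wt{x},l}$ lands in $M(\QQ_l)\cap K_l$ by Proposition~\ref{prop1} (after enlarging $L$ to contain $E(M,X_M)$, which changes nothing for the finiteness question), and by the corollary to Theorem~\ref{teo1}, or rather directly from the Hecke-translation description, each such lattice is of the shape $h\cdot V_{\ZZ_l}$ for $h\in G(\QQ_l)$ in the double coset controlled by $M(\QQ_l)$. So the problem becomes: show that, up to the action of the commensurator/normaliser relevant here, there are only finitely many $\rho_{\wt{x},l}(\Gal(\ol L/L))$-stable lattices in a fixed $\QQ_l$-isogeny class — equivalently, finitely many isomorphism classes among the $(V'_{\ZZ},s',\rho')$.

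Next I would set up the group-theoretic core. Let $H_l$ be the $l$-adic monodromy group of $\rho_{\wt{x}}$, i.e.\ the Zariski closure of the image; after a finite base change assume it is connected, hence reductive by the semisimplicity input. An $l$-isogeny produces another compact open subgroup $\rho'(\Gal(\ol L/L))$ of $H_l(\QQ_l)$ which is conjugate in $\GL(V_{\QQ_l})$ to the original image by the isogeny $\alpha$ — and in fact, because $\alpha$ respects the prime-to-$l$ data and the Hodge structure, $\alpha$ normalises $M(\QQ_l)$ and can be taken in $G(\QQ_l)$ conjugating the image into another open subgroup of $H_l(\QQ_l)$. The key input the excerpt advertises is Proposition~\ref{prop5.8}, a statement on conjugacy classes of open subgroups in $p$-adic groups; I expect its use to be precisely here: it should say that within $H_l(\QQ_l)$ (or within a fixed ambient $p$-adic group) there are only finitely many conjugacy classes of open subgroups of a given ``size'', or that the open subgroups arising are exhausted by finitely many up to conjugacy by elements of a fixed compact set. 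Feeding the image $\rho_{\wt{x},l}(\Gal(\ol L/L))$ and all its $l$-isogenous translates into that proposition yields that the stable lattices $\alpha V_{\ZZ_l}$ fall into finitely many orbits under an appropriate finite-covolume subgroup, hence finitely many isomorphism classes of $(\infty,l)$-integral structures.

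To run the argument cleanly I would proceed in the following order. (1) Reduce to $(G,X)=(M,X_M)$ via Proposition~\ref{prop1}, so that $G=M$ is the Mumford-Tate group and $\rho_{\wt{x},l}$ has image open in $M(\QQ_l)\cap K_l$ assuming the Mumford-Tate conjecture (this is the step where the hypothesis ``generalised Mumford-Tate holds'' enters, making $H_l=M_{\QQ_l}$). (2) Classify $l$-isogenies: show the target of an $l$-isogeny is determined by a $\rho_{\wt{x},l}$-stable $\ZZ_l$-lattice $\Lambda\subset V_{\QQ_l}$ with the endomorphism-compatibility condition, modulo global automorphisms, and that $\Lambda$ can be taken in the $M(\QQ_l)$-orbit of $V_{\ZZ_l}$. (3) Apply Proposition~\ref{prop5.8} to conclude that the set of such $\Lambda$, modulo the group of units $M(\QQ_l)\cap K_l$ (and the finite group of global automorphisms, which is finite since $C(\QQ)\cap K=\{1\}$ by Lemma~\ref{lem2.2} and a Friedland-type bound on torsion), is finite. (4) Translate finiteness of lattice classes into finiteness of isomorphism classes of $(\infty,l)$-integral structures, which is a formality once (2) and (3) are in place. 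The main obstacle is unquestionably step (3): Proposition~\ref{prop5.8} is the ``difficult result in group/measure theory'' flagged in the introduction, and everything hinges on having it in the exact form needed — relating conjugacy classes of open compact subgroups of a $p$-adic reductive group to a finiteness statement robust enough to absorb all $l$-isogenous images at once. Steps (1), (2), (4) are bookkeeping; step (3) is where the real work lives.
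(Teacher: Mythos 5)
Your overall strategy --- assume the generalised Mumford-Tate conjecture so that $U_l=\rho_{\wt{x},l}(\Gal(\ol L/L))$ is open in $M(\QQ_l)$, normalise any $l$-isogenous structure so that only the $\ZZ_l$-lattice varies while $s$ and $\rho$ stay fixed, then invoke Proposition \ref{prop5.8} --- is indeed the skeleton of the paper's conditional proof (Theorem \ref{teo5.5}). But two of your intermediate claims are wrong, and one genuine step is missing. A $U_l$-stable lattice $W_{\ZZ_l}\subset V_{\ZZ_l}$ of $l$-power index need \emph{not} lie in the $M(\QQ_l)$- or $G(\QQ_l)$-orbit of $V_{\ZZ_l}$ (already lattices of the shape $V'_{\ZZ_l}+l^nV_{\ZZ_l}$ built from a Galois submodule, which do occur in the $l$-isogeny class, typically are not of this form), and the conjugating element has no reason to normalise $M(\QQ_l)$ or to lie in $G(\QQ_l)$: an isogeny is merely a rational map intertwining $s'$ with $s$. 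The paper works with arbitrary $\alpha\in\GL(V_{\QQ_l})$ satisfying $\alpha W_{\ZZ_l}=V_{\ZZ_l}$, and the precise content of Proposition \ref{prop5.8} is that the set $T$ of conjugates $\alpha U_l\alpha^{-1}$ contained in $\GL(V_{\ZZ_l})$ is a finite union of $\GL(V_{\ZZ_l})$-orbits, proved by a measure-theoretic argument (the functions $\psi_m$ vanish at infinity on $\GL_n(\QQ_l)/Z_{\GL_n}(M)(\QQ_l)$), where the openness of $U_l$ in $M(\QQ_l)$ is used to identify the centraliser of a generic semi-simple tuple of $U_l^m$ with $Z_{\GL_n}(M)$.

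More seriously, your step (4) --- passing from finiteness of conjugacy/lattice classes to finiteness of isomorphism classes --- is not bookkeeping; it occupies the second half of the paper's proof. An isomorphism of $(\infty,l)$-integral structures must be a \emph{rational} element commuting with the Hodge structure, i.e.\ an element of $Z(\QQ)=Z_{\GL(V_{\QQ})}(M)(\QQ)$, carrying one lattice onto the other both at $l$ and away from $l$. To manufacture such elements the paper proves in addition that $U_l\backslash S/Z(\QQ_l)$ is finite, where $S=\{\alpha:\ \alpha W_{\ZZ_l}=V_{\ZZ_l},\ \alpha U_l\alpha^{-1}=U_l\}$ (using that the normaliser of a compact open subgroup of $M^{\ad}(\QQ_l)$ is compact open), and then uses a finite decomposition $Z(\QQ_l)=\coprod_i H_0\gamma_i Z(\QQ)$ with $H_0=\GL(W_{n_0,\ZZ_l})\cap Z(\QQ_l)$ to replace the $l$-adic comparison elements $\beta_{n,n'}\in Z(\QQ_l)$ by rational ones $\lambda_{n,n'}\in Z(\QQ)$, which are then genuine isomorphisms of the integral structures. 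Your substitute --- quotient by ``the finite group of global automorphisms'' --- does not work: that group is typically infinite (e.g.\ units of the endomorphism ring for an abelian variety with real multiplication), and in any case the task is to \emph{produce} isomorphisms between infinitely many candidate structures, not to divide by automorphisms of one of them. Without this descent from $\QQ_l$-conjugacy to $Z(\QQ)$-equivalence, your argument does not yield finiteness of isomorphism classes.
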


One notices that in the case where the $(\infty,l)$-integral structures are attached to abelian
varieties, this is (a slightly weaker form of) a statement proved by Faltings.

The main theorem we prove in this section is the following.

\begin{teo}
The generalised Shafarevich conjecture is equivalent to the Isogeny conjecture.

More precisely, let as before $(V_{\ZZ_l}, s ,\rho_{\wt{x},l})$ be an $(\infty,l)$-integral
structure of Shimura type. 
Then the set of $(\infty,l)$-integral structures of Shimura type $l$-isogeneous to $(V_{\ZZ_l}, s ,\rho_{\wt{x},l})$
is finite if and only if for any $\mu$, the structure  $(V_{\ZZ_l}, \mu\circ s ,\mu\circ \rho_{\wt{x},l})$
is Tate.
\end{teo}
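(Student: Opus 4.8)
The plan is to prove the two implications separately, exploiting the equivalences already established.

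\medskip

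\noindent\textbf{Generalised Shafarevich $\Rightarrow$ Isogeny.} First I would follow Tate's classical argument. Fix a representation $\mu$ and the associated $(\infty,l)$-integral structure $(V_\ZZ,\mu\circ s,\mu\circ\rho_{\wt x,l})$, with $G_l$ the ($l$-adic) monodromy group, which we may assume connected. Semi-simplicity of $\rho\otimes\QQ_l$ is the ``easy'' half: I would deduce it from the finiteness hypothesis by the standard Tate trick --- if $W\subset V_{\QQ_l}$ is a Galois-stable subspace with no stable complement, one produces infinitely many pairwise non-isomorphic $\Gal(\ol L/L)$-stable lattices in $V_{\ZZ_l}$ (obtained as $V_{\ZZ_l}\cap(\text{sub})+l^n(\text{stuff})$ type constructions), giving infinitely many $l$-isogenous integral structures of Shimura type, contradicting the assumption. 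For the endomorphism equality, take $f\in\End_{\Gal(\ol L/L)}(V_{\ZZ_l})$; one must show $f$ lies in $\End_{\ZZ\text{-HS}}(V_\ZZ)\otimes\ZZ_l$, equivalently that $f$ commutes with $\mu(M)$. Here I would again use finiteness: the $\ZZ_l[\Gal]$-module $V_{\ZZ_l}$, twisted by graphs of multiples of $f$, produces a family of $l$-isogenous lattices; finiteness forces these to fall into finitely many isomorphism classes, which by the usual argument pins $f$ down to come from a global endomorphism of the Hodge structure. This is exactly the content the authors announce they ``follow ad-hoc arguments of Tate from \cite{Tate}'', so I would cite section 5.1 of the paper for the details.

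\medskip

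\noindent\textbf{Isogeny $\Rightarrow$ generalised Shafarevich.} For this direction I would \emph{not} argue directly; instead I would use the already-proved equivalence (Theorem in §4.2) ``Isogeny $\iff$ generalised Mumford-Tate'', and then invoke the deeper implication of §5, namely that the generalised Mumford-Tate conjecture implies the generalised Shafarevich conjecture (theorem \ref{teo5.5}). Concretely: assume every $(\infty,l)$-integral structure of Shimura type is Tate (the Isogeny conjecture). By the §4.2 equivalence applied to the point $x$ and \emph{all} representations $\mu$, we get $H_{\QQ_l}=M_{\QQ_l}$, i.e. the generalised Mumford-Tate conjecture holds for $x$. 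Then \ref{teo5.5} gives that the $l$-isogeny class of $(V_\ZZ,s,\rho_{\wt x,l})$ contains only finitely many isomorphism classes of $(\infty,l)$-integral structures of Shimura type, which is exactly the generalised Shafarevich conjecture. So the statement is a formal consequence of the two theorems already cited in the introduction; the real work is hidden in \ref{teo5.5}.

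\medskip

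\noindent\textbf{Main obstacle.} The hard part is genuinely \ref{teo5.5} (generalised Mumford-Tate $\Rightarrow$ generalised Shafarevich), which rests on proposition \ref{prop5.8} about conjugacy classes of open subgroups in $p$-adic groups. The heuristic --- ``Mumford-Tate big $\Rightarrow$ Galois image big $\Rightarrow$ fields of definition of Hecke-orbit points grow'' --- has to be turned into a quantitative statement: one must show that the (finitely many, by Hermite--Minkowski, once one bounds the ramification and the degree) candidate integral structures in a fixed $l$-isogeny class cannot all be non-isomorphic, and the measure/group-theoretic input controls how the $K_l$-conjugacy classes of the monodromy image behave under the lattice-changing operations. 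Apart from that, the only mild subtlety in the present theorem is bookkeeping: passing between the $(\infty,l)$-integral and $(\infty,l)$-rational pictures and checking that ``Tate for all $\mu$'' is precisely what the §4.2 equivalence consumes and what §5.1 produces. I would make that bookkeeping explicit and otherwise defer to the two cited theorems.
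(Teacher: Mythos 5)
Your proposal is correct and follows essentially the same route as the paper: the Shafarevich-implies-Isogeny direction is the Tate-style lattice/bicommutant argument of section 5.1 (lemma \ref{lem1} and proposition \ref{lem2}, applied to the structure and to its square $(V_\ZZ\times V_\ZZ,s\times s,\rho\times\rho)$), and the converse is obtained, exactly as in the paper, by chaining the section 4.2 equivalence (Isogeny $\Leftrightarrow$ generalised Mumford-Tate) with theorem \ref{teo5.5}, whose proof rests on proposition \ref{prop5.8}. No substantive deviation to report.
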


\subsection{Generalised Shafarevich implies the Isogeny conjecture.}

In this section we prove that the generalised Shafarevich conjecture implies the 
Isogeny conjecture.
The proofs in this section essentially paraphrase section 2 of \cite{Tate}.

\begin{lem} \label{lem1}
Let $(V_{\ZZ}, s ,\rho)$ be an $(\infty, l)$-integral structure on $V_{\QQ}$.
We suppose that the $l$-isogeny class of $(V_{\ZZ}, s ,\rho)$ contains finitely
many isomorphism classes. For any $\Gal(\ol L/L)$-submodule $V'$ of $V_{\QQ_{l}}$,
there exists a $u \in \End_{\QQ-PHS}(V_{\QQ})\otimes \QQ_{l}$ such that
$$
u(V_{\QQ_{l}})=V'
$$
\end{lem}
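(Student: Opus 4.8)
The strategy is the one Tate uses in \cite{Tate}, Section~2: exploit the finiteness hypothesis to produce an idempotent (or a projector) with the right rationality properties, then correct it to land in the Hodge-endomorphism algebra. First I would reduce to the case where $V'$ is a direct summand of $V_{\QQ_l}$ as a Galois module. Since $\rho$ is part of an $(\infty,l)$-integral structure, I do \emph{not} know a priori that $\rho\otimes\QQ_l$ is semisimple; but for the purposes of the lemma it suffices to work with the Galois-submodule $V'$ and its image, so I would not need full semisimplicity — only that the family of lattices constructed below is $l$-isogenous to the original one. Concretely, for each integer $n\geq 0$ set $V_{\ZZ,n} := (V_{\ZZ}\cap V') \oplus l^n(V_{\ZZ}\cap V'')$ after choosing \emph{some} $\QQ_l$-complement $V''$ — or, to stay intrinsic, let $V_{\ZZ,n}$ be the lattice agreeing with $V_{\ZZ}$ away from $l$ and equal to the preimage under $V_{\ZZ_l}\to V_{\ZZ_l}/l^n(V_{\ZZ_l})$ of the image of $V'\cap V_{\ZZ_l}$. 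Each $V_{\ZZ,n}$ carries the same $s$ and the same $\rho$ (here $V'$ being Galois-stable is used), so $(V_{\ZZ,n},s,\rho)$ is an $(\infty,l)$-integral structure, and it is $l$-isogenous to $(V_{\ZZ},s,\rho)$ because the index $|V_{\ZZ}/V_{\ZZ,n}|$ is a power of $l$.

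Next comes the pigeonhole step. By the finiteness hypothesis, among the infinitely many structures $(V_{\ZZ,n},s,\rho)$, $n\in\NN$, two distinct ones, say for $n$ and $n'$ with $n<n'$, are isomorphic: there is $\alpha\in\GL(V_\QQ)$ with $\alpha(V_{\ZZ,n})=V_{\ZZ,n'}$, $\alpha$ a morphism of $\ZZ$-Hodge structures, and $\alpha_l$ a morphism of Galois representations. In particular $\alpha\in\End_{\QQ\text{-PHS}}(V_\QQ)$ (it is an automorphism of the $\QQ$-Hodge structure) and $\alpha\otimes\QQ_l$ commutes with $\Gal(\ol L/L)$. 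The point of the telescoping construction is that $\alpha$ acts on $V'$ and on the complementary direction with $l$-adic valuations differing by $n'-n>0$; hence, after composing with a power of $\alpha$ and rescaling, iterating $\alpha\circ(\text{scalar})$ and passing to the limit in $\End(V_{\QQ_l})$ (which is $l$-adically complete) produces an element $u_0\in\End_{\QQ\text{-PHS}}(V_\QQ)\otimes\QQ_l$ that is the projector onto $V'$ along the chosen complement: $u_0|_{V'}$ invertible and $u_0$ kills the complement, or directly $u_0(V_{\QQ_l})=V'$. This is exactly Tate's argument that a Galois-stable subspace cut out by a lattice-periodicity phenomenon is cut out by an $l$-adic endomorphism commuting with Galois; the Hodge-endomorphism refinement is automatic because every $\alpha$ in sight was a Hodge morphism and $\End_{\QQ\text{-PHS}}(V_\QQ)$ is a $\QQ$-subalgebra of $\End(V_\QQ)$, stable under the limiting process after tensoring with $\QQ_l$.

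Finally I would record that $u_0$ (or a suitable polynomial in it, to get $u_0(V_{\QQ_l})$ exactly equal to $V'$ rather than merely having image a line in each isotypic piece) lies in $\End_{\QQ\text{-PHS}}(V_\QQ)\otimes\QQ_l$ and satisfies $u_0(V_{\QQ_l})=V'$, which is the assertion. \textbf{The main obstacle} I expect is the bookkeeping that makes the telescoping/limit argument yield an operator defined over the Hodge-endomorphism algebra rather than just over $\End(V_{\QQ_l})$: one must choose the intermediate lattices $V_{\ZZ,n}$ so that the isomorphisms $\alpha$ supplied by finiteness genuinely respect the $\ZZ$-Hodge structure (this uses that $V_{\ZZ,n}$ is an actual sublattice of $V_\ZZ$, so the Hodge structure on it is the restricted one), and then to check that the $l$-adic limit of $\QQ$-rational Hodge endomorphisms, suitably normalized, stays in $\End_{\QQ\text{-PHS}}(V_\QQ)\otimes\QQ_l$ — which holds because that algebra is a finite-dimensional $\QQ$-vector space, hence $\End_{\QQ\text{-PHS}}(V_\QQ)\otimes\ZZ_l$ is closed in $\End(V_{\ZZ_l})$. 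A secondary subtlety, also present in Tate's original, is ensuring one can pass from "image is a Galois-stable line in each isotypic component" to "image is exactly $V'$"; this is handled by decomposing $V'$ into its pieces and taking an appropriate $\QQ_l$-linear combination of the projectors obtained, all of which remain in the Hodge-endomorphism algebra.
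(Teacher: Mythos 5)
Your construction of the auxiliary lattices is essentially the right one (the intrinsic version: the lattice agreeing with $V_{\ZZ}$ away from $l$ and equal to $(V'\cap V_{\ZZ_l})+l^nV_{\ZZ_l}$ at $l$; note that your first formula $(V_{\ZZ}\cap V')\oplus l^n(V_{\ZZ}\cap V'')$ does not work, since $V'$ is only a $\QQ_l$-subspace and $V_{\ZZ}\cap V'$ can even be zero). The genuine gap is in the limiting step. You extract from the finiteness hypothesis a single isomorphism $\alpha\colon (V_{\ZZ,n},s,\rho)\to(V_{\ZZ,n'},s,\rho)$ and claim that $\alpha$ ``acts on $V'$ and on the complementary direction with $l$-adic valuations differing by $n'-n$'', so that iterating $\alpha$ (suitably rescaled) converges to a projector with image $V'$. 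This is unjustified: $\alpha$ only commutes with the Galois action, so it need not stabilise $V'$ (there may be other Galois-submodules isomorphic to it, or the action may be far from isotypic), and it certainly need not respect any chosen complement. Concretely, with $V_{\QQ_l}=\QQ_l e_1\oplus\QQ_l e_2$, $V'=\QQ_l e_1$, $V_{\ZZ_l}=\ZZ_l e_1\oplus\ZZ_l e_2$, the map $\alpha(e_1)=l^{n'}e_2$, $\alpha(e_2)=l^{-n}e_1$ sends $\Lambda_n=\ZZ_l e_1+l^n\ZZ_l e_2$ onto $\Lambda_{n'}$, but $\alpha^2$ is the scalar $l^{n'-n}$; no rescaled power of $\alpha$ converges to an operator with image $V'$. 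So the telescoping mechanism fails in general, and with it the whole second half of your argument (including the final ``polynomial in $u_0$'' fix for surjectivity, which is left vague).

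The argument the paper uses (and which is Tate's actual argument in \cite{Tate}, \S 2) needs the finiteness hypothesis in a stronger way: infinitely many of the structures $(V_{n,\ZZ},s,\rho)$ lie in one isomorphism class, so one gets, for every $n$ in an infinite set, an isomorphism $\alpha_n\colon(V_{n_0,\ZZ},s,\rho)\to(V_{n,\ZZ},s,\rho)$; composing with the inclusion $f_n\colon V_{n,\ZZ}\hookrightarrow V_{n_0,\ZZ}$ gives $u_n=f_n\alpha_n\in\End_{\ZZ-HS}(V_{n_0,\ZZ})\otimes\ZZ_l$ with $u_n(V_{n_0,\ZZ_l})=V_{n,\ZZ_l}$. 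One then extracts a limit point $u$ of the whole family $(u_n)$ in the compact set $\End(V_{n_0,\ZZ_l})$ (here your closedness remark for $\End_{\ZZ-HS}\otimes\ZZ_l$ is correct and is used); the inclusion $u(V_{\QQ_l})\subset V'$ follows because $V_{n,\ZZ_l}$ differs from $V'_{\ZZ_l}+l^nV''_{\ZZ_l}$ by bounded index, and surjectivity onto $V'$ is obtained by a second compactness argument (for $v'\in V'_{\ZZ_l}$ choose $v_n$ with $u_nv_n=v'$ and pass to a limit), not by modifying $u$ afterwards. If you replace your single-$\alpha$ iteration by this family-plus-compactness argument, the rest of your outline goes through.
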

\begin{proof}
Let $V'$ be a $\Gal(\ol L/L)$-submodule of $V_{\QQ_{l}}$ and fix a $\QQ_{l}$-sub-module $V''$ of
$V_{\QQ_{l}}$ such that 
$V_{\QQ_{l}}=V'\oplus V''$.

Let $V'_{\ZZ_{l}}:= V'\cap (V_{\ZZ}\otimes \ZZ_{l})$ and $V''_{\ZZ_{l}}:= V''\cap (V_{\ZZ}\otimes \ZZ_{l})$.
These are lattices in $V'$ and $V''$ respectively.
Let
\begin{equation}\label{dec1}
V_{n,\ZZ_{l}} :=  V'_{\ZZ_{l}} + l^n V_{\ZZ_{l}}
\end{equation}

The $\ZZ$-submodule $V_{n,\ZZ} := V_{n,\ZZ_{l}}\cap V_{\ZZ}$ of $V_{\ZZ}$ is free of the same rank
and $V_{n,\ZZ}\otimes \ZZ_{l} = V_{n,\ZZ_{l}}$.
Thus we have constructed a sequence ofl $(\infty, l)$-integral structures $(V_{n,\ZZ}, s, \rho)$.
Note that, by construction, all these structures are $l$-isogeneous to $(V_{\ZZ},s,\rho)$, therefore, by assumption, they fall
 into finitely many isomorphism classes.
Hence, we can assume that all the $(V_{n,\ZZ}, s,\rho)$ are isomorphic to a fixed 
$(V_{n_{0}\ZZ}, s, \rho)$.
We can view each $(V_{n,\ZZ}, s,\rho)$ as a sub-$(\infty, l)$-integral structure of $(V_{n_{0}\ZZ}, s, \rho)$ for $n\geq n_{0}$.

Let $\alpha_n \colon (V_{n_{0}\ZZ}, s, \rho) \lto (V_{n,\ZZ}, s, \rho)$ be an isomorphism and
 $f_n$ be the isogenies given by inclusions between $(V_{n,\ZZ}, s, \rho)$ and  $(V_{n_{0}\ZZ}, s, \rho)$).



Hence $u_n =  f_{n} \alpha_{n}$ is an element of $\End(V_\QQ)$ and it satisfies
$$
u_n(V_{n_0\ZZ_l}) = V_{n,\ZZ_l}\subset V_{n_0\ZZ_l}.
$$

Hence $u_n\otimes \ZZ_{l}$ is in $\End(V_{n_0\ZZ_l})$ which is compact.
After possibly passing to a sub-sequence, we may
assume that $u_n\otimes \ZZ_{l}$ converges to $u \in \End(V_{n_{0}\ZZ_{l}})$.

On the other hand, as $u_n \in \End_{\ZZ-PHS}(V_{n_{0}})\otimes \ZZ_{l}$ which is closed in
$\End(V_{n_{0\ZZ_{l}}})$ (using the fact that the $u_n$ commute to the Mumford-Tate group),
 we see that $u\in  \End_{\ZZ-PHS}(V_{n_{0}})\otimes \ZZ_{l}$.

It is easy to see that there exists $k\in \NN$ such the index of 
$V'_{\ZZ_{l}} + l^n V''_{\ZZ_{l}}$ in $V_{n,\ZZ_{l}} =  V'_{\ZZ_{l}} + l^n V_{\ZZ_{l}}$
is bounded by $l^k$ when $n\rightarrow \infty$.

 Let $v \in V_{n_{0}\ZZ_{l}}$ and let $v = v' + v''$ with $v'\in V'_{\QQ_{l}}$
 and $v''\in V''_{\QQ_{l}}$.  
 Using the previous  discussion we can write
 $u_n v = v'_n + v''_n$ with $l^k v'_{n}\in V'_{\ZZ_{l}}$ and   $v''_{n} \in l^{n-k} V''_{\ZZ_{l}}$.
By making $n$ go to infinity, we see that $u(v) \in V'$. By linearity this implies
that $u(V_{\QQ_{l}})\subset V'$.
To see that $u$ surjects onto $V'$, take any element $v'$ of $V'_{ \ZZ_{l}}$.
As $u_n$ is surjective, there exists an element $v_{n}$ of $V'_{n_{0} \ZZ_{l}}$ such that $u_nv_{n} = v'$.
By compacity we may assume that $v_{n}$ converges to $v\in V'_{n_{0} \ZZ_{l}}$ and 
by passing to the limit, we see that $u(v) = v'$.
By extending scalars to $\QQ_{l}$, we see that $u$ is surjective.
\end{proof}

\begin{prop} \label{lem2}
Let $(V_{\QQ}, s, \rho)$ be a $(\infty, l)$-rational structure on $V_{\QQ}$. 
Suppose that for every $\Gal(\ol L/L)$-submodule $W$ of $V_{\QQ_{l}} = V_{l}$ 
(resp. everery $\Gal(\ol L/L)$-submodule of $(V_{\QQ_l}\times V_{\QQ_l}, s\times s, \rho \times \rho)$),
there exists a $u\in \End_{\QQ-PSH}(V_{\QQ})\otimes\QQ_{l}$
(resp. $u\in \End_{\QQ-PSH}(V_{\QQ}\times V_{\QQ})\otimes\QQ_{l}$
such that $u(V_{l}) = W$  (resp. $u(V_{l}\times V_{l})=W$)    , then
$(V_{\QQ},s,\rho)$ is Tate.
\end{prop}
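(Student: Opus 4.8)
The goal is to deduce that $(V_{\QQ},s,\rho)$ is Tate from the hypothesis that every Galois submodule of $V_l$ (and of $V_l \times V_l$) is the image of an endomorphism commuting with the Hodge structure. There are two things to prove: that $\rho \otimes \QQ_l$ is semi-simple, and that the inclusion $\End_{\QQ-HS}(V_{\QQ})\otimes \QQ_l \subset \End_{\Gal(\ol L/L)}(V_{\QQ_l})$ is an equality. I would handle semi-simplicity first. Let $W$ be any Galois submodule of $V_l$; by hypothesis there is $u \in \End_{\QQ-HS}(V_{\QQ})\otimes \QQ_l$ with $u(V_l) = W$. Since $u$ commutes with the Galois action, $\ker u$ is also a Galois submodule, and $V_l / \ker u \cong W$ as Galois modules. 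The point is to produce a Galois-stable complement to $W$. A clean way: consider the Galois submodule $\ker u \cap W$ inside $V_l$; iterating, or passing to a suitable power, one can arrange that $V_l$ decomposes. Alternatively, and more robustly, I would argue that the $\QQ_l$-algebra $B := \End_{\QQ-HS}(V_{\QQ})\otimes \QQ_l$ acts on $V_l$ and every Galois submodule is a $B$-submodule of the form $u V_l$; since $B$ is a semisimple $\QQ_l$-algebra (being $\End_{\QQ-HS}(V_\QQ)\otimes \QQ_l$, and $\End_{\QQ-HS}(V_\QQ)$ is semisimple because the Hodge structure is polarisable), every $B$-submodule has a $B$-stable complement, which is in particular Galois-stable. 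This gives semi-simplicity of $\rho\otimes\QQ_l$.

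Next, the equality of endomorphism algebras. Let $f \in \End_{\Gal(\ol L/L)}(V_{\QQ_l})$; I must show $f \in B$. The standard trick (this is exactly Tate's argument in \cite{Tate}, section 2, transposed to this setting) is to apply the hypothesis to the Galois module $V_l \times V_l$: the graph $\Gamma_f = \{(v, f(v)) : v \in V_l\} \subset V_l \times V_l$ is a Galois submodule because $f$ is Galois-equivariant. By the hypothesis applied to $V_\QQ \times V_\QQ$, there exists $u \in \End_{\QQ-HS}(V_\QQ \times V_\QQ)\otimes \QQ_l$ with $u(V_l \times V_l) = \Gamma_f$. Now $\End_{\QQ-HS}(V_\QQ \times V_\QQ)\otimes \QQ_l$ is the $2\times 2$ matrix algebra $M_2(B)$ (more precisely, block matrices with entries in $\End_{\QQ-HS}(V_\QQ)\otimes\QQ_l$), so $u$ has the form $\begin{pmatrix} a & b \\ c & d \end{pmatrix}$ with $a,b,c,d \in B$. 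Writing out that the image of $u$ is the graph of $f$: for every $v,w \in V_l$, $u(v,w) = (av+bw, cv+dw)$, and this must have second coordinate equal to $f$ of its first coordinate, i.e. $cv + dw = f(av+bw)$ for all $v,w$. Taking $w = 0$ gives $cv = f(av)$ for all $v$, so $c = f a$ on $V_l$; taking $v=0$ gives $d = f b$. Surjectivity of $u$ onto $\Gamma_f$ means the first-coordinate map $(v,w)\mapsto av+bw$ is surjective onto $V_l$, i.e. $aV_l + bV_l = V_l$. Then $f(V_l) = f(aV_l + bV_l) = f a V_l + f b V_l = c V_l + d V_l \subseteq B\cdot V_l$; more carefully, one writes any $v = a v_1 + b v_2$ and then $f(v) = c v_1 + d v_2$, which exhibits $f$ as induced by the element $c, d$ of $B$ acting through a $B$-linear splitting — one then checks this is well-defined, i.e. independent of the choice of $v_1, v_2$, using that $av_1 + bv_2 = 0 \Rightarrow cv_1 + dv_2 = f(0) = 0$. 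This shows $f$ lies in the image of $B$ acting on $V_l$, i.e. $f \in B$ (using that $V_\QQ$ is a faithful $\End_{\QQ-HS}(V_\QQ)$-module). Hence $\End_{\Gal(\ol L/L)}(V_{\QQ_l}) \subseteq B$, and combined with the given inclusion in the other direction we get equality.

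\textbf{Main obstacle.} The bookkeeping is routine; the one genuinely delicate point is the well-definedness argument for $f$ in the second part — namely passing from ``$f(V_l)$ is controlled by $u$'' to ``$f$ \emph{is} an element of $B$''. The key input there is that $V_\QQ$ (hence $V_{\QQ_l}$) is a faithful and semisimple module over the semisimple algebra $\End_{\QQ-HS}(V_\QQ)$, so that any $\QQ_l$-linear endomorphism of $V_{\QQ_l}$ which respects the $B$-module structure on a generating set, in the precise sense extracted from the graph condition, actually comes from $B$; this is where polarisability of the Hodge structure is essential, and it is also what makes the Galois-stable complement in the semi-simplicity step exist. I would also need to be slightly careful that ``$u(V_l \times V_l) = \Gamma_f$'' is stated over $\QQ_l$ (images of $\QQ_l$-rational structures), so no integrality subtlety intervenes here — it is purely a statement about $\QQ_l$-vector spaces with Galois action and the algebra $B$.
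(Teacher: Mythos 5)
Your overall strategy is the right one (it is Tate's argument, and your graph trick in the second half is exactly what the paper does), but at the two decisive points your argument asserts rather than proves, and in each case the assertion as stated is false or unjustified. For semi-simplicity: the image $uV_{\QQ_l}$ of an element $u$ of $B:=\End_{\QQ-PHS}(V_{\QQ})\otimes\QQ_l$ is in general \emph{not} a $B$-submodule (take $B=M_2(\QQ_l)$ and $u$ a matrix unit), so ``every Galois submodule is a $B$-submodule of the form $uV_l$'' is unjustified; and even granting it, ``a $B$-stable complement is in particular Galois-stable'' is a non sequitur --- the compatibility condition says $B$ commutes with the Galois action, which makes Galois permute $B$-submodules, not fix a chosen complement (with $B=\QQ_l$ every subspace is $B$-stable and the claim is clearly wrong). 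The missing idea, which is how the paper proceeds, is to produce the complement from an idempotent of $B$ itself: $I=\{u\in B : uV_{\QQ_l}\subset W\}$ is a right ideal of the semisimple algebra $B$, hence $I=eB$ with $e$ idempotent; the hypothesis supplies $u\in I$ with $uV_{\QQ_l}=W$, so $W=eV_{\QQ_l}$, and then $W'=(1-e)V_{\QQ_l}$ is a complement which is Galois-stable precisely because $1-e\in B$ commutes with $\Gal(\ol L/L)$.

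In the second half, the computation giving $c=fa$, $d=fb$ and $aV_{\QQ_l}+bV_{\QQ_l}=V_{\QQ_l}$ is fine, but the passage to ``$f\in B$'' is exactly where your argument breaks: the map $(v,w)\mapsto av+bw$ is not a morphism of $B$-modules (it is only linear over the commutant $C$ of $B$ in $\End(V_{\QQ_l})$, since $a,b$ need not be central in $B$), so there is no ``$B$-linear splitting'', and the well-definedness check you emphasise merely recomputes $f$ and says nothing about membership in $B$. The needed bridge is the double commutant theorem, which the paper invokes explicitly: for $\gamma\in C$ one has $f\gamma(av+bw)=c\gamma v+d\gamma w=\gamma(cv+dw)=\gamma f(av+bw)$, and since $aV_{\QQ_l}+bV_{\QQ_l}=V_{\QQ_l}$ this shows $f$ commutes with $C$; then semisimplicity of $B$ (here polarisability enters) gives $f\in Z_{\End(V_{\QQ_l})}(C)=B$. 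The paper gets the same identity without decomposing $u$ at all, by noting that the matrix $\diag(\gamma,\gamma)$ commutes with $u\in\End_{\QQ-PHS}(V_\QQ\times V_\QQ)\otimes\QQ_l$ and hence preserves the graph, which yields $\alpha\gamma=\gamma\alpha$ directly. Your route can be repaired (replace ``$B$-linear'' by ``$C$-linear'' splitting and then apply the bicommutant theorem), but as written both the existence of a Galois-stable complement and the final step $f\in B$ are gaps, and the same ingredient is missing in each: idempotents/commutants inside $B$ together with the bicommutant theorem for the semisimple algebra $B$.
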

\begin{proof}
We first verify the semi-simplicity.
Let $W$ be a $\Gal(\ol L/L)$-invariant subspace of $V_{\QQ_l}$. We need to construct a 
$\Gal(\ol L/L)$-invariant complement $W'$.
Consider the right ideal in $\End_{\QQ-PHS}(V_\QQ)\otimes \QQ_l$
$$
I = \{ u\in \End_{\QQ-PHS}(V_\QQ)\otimes \QQ_l : uV_{\QQ_l}\subset W \}
$$
As $I$ is a right ideal in a semi-simple algebra, it is generated by
an idempotent element $e$.
We have $W=eV_{\QQ_l}$ and we let $W' = (1-e)V_{\QQ_l}$, then
$$
V_{\QQ_l} = W\oplus W'
$$
As $\Gal(\ol L/L)$ commute with $\End_{\QQ-PHS}(V_\QQ)\otimes \QQ_l$, the space $W'$ is $\Gal(\ol L/L)$-stable.
This proves the semisimplicity. 

We now prove the second condition.
Let $\alpha$ be an element of $\End(V_l)$ commuting with $\Gal(\ol L/L)$.
Consider the graph of $\alpha$
$$
W = \{ (x, \alpha(x)) : x \in V_l \}
$$
Note that $W$ is a $\Gal(\ol L/L)$-invariant subspace of $V_l \times V_l$, therefore there is a 
$u\in \End_{\QQ-PHS}(V\times V)\otimes \QQ_l$ such that
$$
u (V_l \times V_l) = W
$$
Let  $C$ be the commutant of $\End_{\QQ-PHS}(V_\QQ)\otimes\QQ_l$ in $\End_{\QQ_l}(V_l)$,
and $c$ be an element of $C$. Then 
$
\begin{pmatrix}
  c & 0\\
  0 & c\\
\end{pmatrix}
$
is an element of $\End(V_l \times V_l)$ and it commutes with $\End_{\QQ-PHS}(V\times V)\otimes \QQ_{l}$
and in particular with $u$.
Consequently 
$$
\begin{pmatrix}
  c & 0\\
  0 & c\\
\end{pmatrix}
W = 
\begin{pmatrix}
  c & 0\\
  0 & c\\
\end{pmatrix}
u\mbox{ }     V_l\times V_{l} = u
\begin{pmatrix}
  c & 0\\
  0 & c\\
\end{pmatrix}
V_l\times V_{l} \subset W
$$
Hence, for any $x\in V_l$, 
$$
(cx,c\alpha x) \in W
$$ 
therefore 
$$
\alpha cx = c\alpha x
$$
Thus for all $c\in C$, $\alpha c = c \alpha$ and $\alpha$ belongs to the 
commutant of the commutant of $\End_{\QQ-PHS}(V)\otimes \QQ_l$ in $\End(V_l)$.
By the double commutant theorem, $\alpha$ belongs to $\End_{\QQ-PHS}(V)\otimes \QQ_l$.
It follows that $(V_{\ZZ},s,\rho)$ is Tate.
\end{proof}

Now we can prove that the generalised Shafarevich conjecture implies the generalised Tate conjecture.
Let $(V_\ZZ,s,\rho)$ be an $(\infty, l)$-integral stucture. According to
the generalised Shafarevich Conjecture, the $l$-isogeny classes of 
$(V_\ZZ,s,\rho)$ and $(V_\ZZ \times V_{\ZZ},s\times s,\rho \times \rho)$
fall into finitely many isomorphism classes. 
The lemma \ref{lem1} implies that the assumptions of \ref{lem2} are satisfied, therefore, by
\ref{lem2}, $(V_\ZZ,s,\rho)$ is Tate.

\subsection{Mumford-Tate implies generalised Shafarevich.}

In this section we prove that the Mumford-Tate conjecture implies the generalised Shafarevich conjecture.

\begin{teo}\label{teo5.5}
Let $(V_{\ZZ}, s , \rho)$ be 
an $(\infty , l)$-integral structure such that $U_l:=\rho(\Gal(\ol L/L))$ is an open subgroup of $M(\QQ_{l})$
where $M$ is the Mumford-Tate group of $s$.

The $l$-isogeny class of  $(V_{\ZZ}, s , \rho)$ is a union of finitely many isomorphism classes.
\end{teo}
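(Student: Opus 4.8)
The plan is to reduce the finiteness statement to a statement about conjugacy classes of open subgroups in the $p$-adic group $M(\QQ_l)$, which is precisely the content of the promised proposition \ref{prop5.8}. First, I would unwind what an $l$-isogeny between two structures of Shimura type amounts to at the level of lattices. If $(V'_{\ZZ},s,\rho')$ is $l$-isogeneous to $(V_{\ZZ},s,\rho)$, then after tensoring with $\QQ$ the two $(\infty,l)$-rational structures are isomorphic; the Hodge structure $s$ and the rational Galois representation $\rho\otimes\QQ_l$ are unchanged, and all the data is carried by a lattice $V'_{\ZZ_l}\subset V_{\QQ_l}$ which differs from $V_{\ZZ_l}$ only at $l$ (so $V'_{\ZZ}$ is determined by a lattice in $V_{\QQ_l}$ commensurable with $V_{\ZZ_l}$). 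Since $M$ acts on $V_{\QQ}$ and the Mumford-Tate group of $s$ is fixed throughout the isogeny class, $\End_{\ZZ-HS}$ is controlled by $M$; the essential invariant distinguishing isomorphism classes is the $\GL(V_{\ZZ_l})$-image — more precisely the relative position — of the lattice $V'_{\ZZ_l}$ together with the group $U_l'=\rho'(\Gal(\ol L/L))$ it carries.

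The key step is the following translation. Because $\rho$ has open image $U_l$ in $M(\QQ_l)$, for any lattice $\Lambda$ in $V_{\QQ_l}$ stable under $U_l$ the stabiliser $\Stab_{M(\QQ_l)}(\Lambda)$ is an open compact subgroup of $M(\QQ_l)$; conversely an $l$-isogeny class of Shimura-type structures lying over a fixed $x$ produces a family of such pairs $(\Lambda, U)$ with $U$ open in $M(\QQ_l)$ and $U$ conjugate (within $\GL(V_{\QQ_l})$, in fact within the $\QQ_l$-points of the relevant group) to $U_l$. Two such data give isomorphic $(\infty,l)$-integral structures precisely when they are in the same orbit under $\Aut$ of the rational structure, which contains $\End_{\ZZ-HS}(V_\QQ)^\times\otimes\QQ_l$ and in particular the centraliser of $M$. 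So it suffices to show: the set of open compact subgroups of $M(\QQ_l)$ that are $\GL(V_{\QQ_l})$-conjugate to the fixed open compact $U_l^{\max}$ (a maximal compact containing $U_l$, say), considered up to conjugacy in $M(\QQ_l)$, is finite; and, for each such subgroup, the set of $U_l^{\max}$-stable lattices up to the action of that subgroup is finite. The first assertion is exactly proposition \ref{prop5.8} (conjugacy classes of open subgroups in $p$-adic groups), which we are entitled to invoke; the second is a standard compactness/finiteness fact, since a compact open subgroup of $\GL(V_{\QQ_l})$ has only finitely many orbits on lattices within a fixed commensurability class (the relevant double coset space $U\backslash \GL(V_{\QQ_l})/\GL(V_{\ZZ_l})$ truncated to bounded elementary divisors is finite).

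Assembling these: given the $l$-isogeny class of $(V_\ZZ,s,\rho)$, each member produces a pair consisting of an open subgroup of $M(\QQ_l)$ conjugate to $U_l$ and a lattice it stabilises; by proposition \ref{prop5.8} the subgroup lies in one of finitely many $M(\QQ_l)$-conjugacy classes, and after conjugating into a fixed representative the lattice lies in one of finitely many orbits; finally different lattices in the same orbit under the relevant automorphism group give isomorphic structures. Hence the isomorphism classes are finite in number. The main obstacle — and the technically hardest input — is precisely the finiteness of conjugacy classes of open subgroups in $M(\QQ_l)$, i.e. proposition \ref{prop5.8}; granting that, the bookkeeping with lattices and with the action of the centraliser of $M$ is routine but must be done carefully to ensure that the finitely many $M(\QQ_l)$-conjugates of $U_l$ really do exhaust, up to isomorphism of the integral structures, all members of the $l$-isogeny class.
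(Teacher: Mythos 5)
Your overall strategy --- reduce to lattices in $V_{\QQ_l}$ commensurable with $V_{\ZZ_l}$ carrying the same $s$ and $\rho$, invoke Proposition \ref{prop5.8} for a conjugacy finiteness, then finish by bookkeeping --- has the same shape as the paper's argument, but the two places you declare routine are exactly where the proof has real content, and as written both have gaps. First, your claim that one only needs finitely many orbits of a compact open subgroup on lattices ``truncated to bounded elementary divisors'' is unjustified: an $l$-isogeny class imposes no bound on elementary divisors (already $l^nV_{\ZZ}$ lies in it for every $n$), and a compact open subgroup of $\GL(V_{\QQ_l})$ has infinitely many orbits on a full commensurability class of lattices. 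The unboundedness has to be absorbed into the centraliser $Z=Z_{\GL(V_{\QQ_l})}(M)$, and making that precise requires a further finiteness statement, not compactness alone: in the paper one shows that the set $S$ of $\alpha$ with $\alpha W_{\ZZ_l}=V_{\ZZ_l}$ and $\alpha U_l\alpha^{-1}=U_l$ has $U_l\backslash S/Z(\QQ_l)$ finite, using that the normaliser of a compact open subgroup of the $\QQ_l$-points of a semisimple group is compact open.

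Second, and more seriously, you treat elements of $\End_{\ZZ-HS}(V_{\QQ})\otimes\QQ_l$ (equivalently of $Z(\QQ_l)$) as if they produced isomorphisms of $(\infty,l)$-integral structures. By the paper's definition an isomorphism is an element of $\GL(V_{\QQ})$ which is a morphism of $\ZZ$-Hodge structures and whose extension to $\ZZ_l$ intertwines the Galois representations; a $\QQ_l$-point of the centraliser does not act on the rational Hodge structure at all. So after your group-theoretic reductions you are left with lattices $W_{n,\ZZ_l}$ related by elements of $Z(\QQ_l)$, and you still must descend to rational isomorphisms. This is the final, non-routine step of the paper: setting $H_0=\GL(W_{n_0,\ZZ_l})\cap Z(\QQ_l)$ and using a finite decomposition $Z(\QQ_l)=\coprod_i H_0\gamma_i Z(\QQ)$ to replace the comparison elements $\beta_{n,n'}\in Z(\QQ_l)$ by elements $\lambda_{n,n'}\in Z(\QQ)$, which, being rational and centralising $M$, are honest isomorphisms of the integral structures. (Note also that Proposition \ref{prop5.8} is a finiteness of $\GL(V_{\ZZ_l})$-orbits on the set of conjugates $\alpha U_l\alpha^{-1}$ contained in $\GL(V_{\ZZ_l})$, not a finiteness up to $M(\QQ_l)$-conjugacy as you use it, so the way it feeds into the lattice bookkeeping differs.) Without these two descent/finiteness inputs your argument does not close.
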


\begin{lem}
An $(\infty,l)$-integral structure $l$-isogeneous to $(V_{\ZZ},s,\rho)$
admits a representative (in its isomorphism class) of the form
$(W_{\ZZ},s,\rho)$ where $W_{\ZZ}$ is a sublattice of $V_{\ZZ}$ of index a power of $l$
such that $W_{\ZZ_{l}}$ is $\Gal(\ol L/L)$-invariant.
\end{lem}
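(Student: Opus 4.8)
The plan is to start from an arbitrary $(\infty,l)$-integral structure $l$-isogeneous to $(V_{\ZZ},s,\rho)$; by definition this is some $(W'_{\ZZ},s',\rho')$ whose extension to $\QQ$ is isomorphic to $(V_{\QQ},s,\rho\otimes\QQ_{l})$, and moreover the isogeny may be taken to be an $l$-isogeny, so $|V_{\ZZ}/\alpha W'_{\ZZ}|$ (or the reverse index) is a power of $l$. First I would use the chosen isomorphism on rational structures to identify $W'_{\QQ}$ with $V_{\QQ}$ compatibly with the Hodge structure $s$ and the Galois representation $\rho$; this replaces $(W'_{\ZZ},s',\rho')$ by a structure of the form $(W_{\ZZ},s,\rho)$ where now $W_{\ZZ}$ is simply a lattice in $V_{\QQ}$, and $\rho$ acts on $V_{\QQ_l}$ exactly as before. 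The content of the lemma is then purely about the position of the lattice $W_{\ZZ}$ relative to $V_{\ZZ}$.

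Next I would analyse what an $l$-isogeny forces on $W_{\ZZ}$. Away from $l$, the isogeny $\alpha$ is an isomorphism $W_{\ZZ_p}\to V_{\ZZ_p}$ for every prime $p\neq l$ (since the index is a power of $l$), so after the identification above we have $W_{\ZZ_p}=V_{\ZZ_p}$ for all $p\neq l$. Hence $W_{\ZZ}$ and $V_{\ZZ}$ differ only at $l$: replacing $W_{\ZZ}$ by $W_{\ZZ}\cap V_{\ZZ}$ and then scaling, one arranges $W_{\ZZ}\subseteq V_{\ZZ}$ with $[V_{\ZZ}:W_{\ZZ}]$ a power of $l$, i.e. $W_{\ZZ_l}\subseteq V_{\ZZ_l}$ with $l^N V_{\ZZ_l}\subseteq W_{\ZZ_l}$ for some $N$, and $W_{\ZZ_p}=V_{\ZZ_p}$ for $p\neq l$. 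One checks this operation only changes the structure within its isomorphism class (multiplication by an appropriate $l$-power on $V_{\QQ}$ is an isomorphism of $(\infty,l)$-integral structures, and intersecting with $V_{\ZZ}$ is an isomorphism when the two lattices agree away from $l$). Finally, the hypothesis that $\rho_l=\rho\otimes\ZZ_l$ is an isogeny from $(W'_{\ZZ_l},\rho')$ says exactly that $W_{\ZZ_l}$ is $\Gal(\ol L/L)$-stable: $\rho$ preserves $W_{\ZZ_l}$ inside $V_{\QQ_l}$. This yields a representative of the required form $(W_{\ZZ},s,\rho)$.

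The main obstacle I anticipate is bookkeeping rather than conceptual: one must be careful that each normalisation step (rescaling by powers of $l$, passing to $W_{\ZZ}\cap V_{\ZZ}$, adjusting $\rho'$ back to $\rho$ via the fixed rational isomorphism) genuinely produces an \emph{isomorphic} $(\infty,l)$-integral structure, not merely an isogeneous one, and that the Hodge structure $s$ and the compatibility condition $\End_{\ZZ\text{-}HS}(W_{\ZZ})\otimes\ZZ_l\subseteq\End_{\Gal(\ol L/L)}(W_{\ZZ_l})$ are preserved throughout — the latter is automatic once $s$ and $\rho$ are literally unchanged and $W_{\ZZ_l}$ is Galois-stable, since $\End_{\ZZ\text{-}HS}(W_{\ZZ})\subseteq\End_{\ZZ\text{-}HS}(V_{\QQ})$ after tensoring with $\QQ_l$. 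I would present these verifications briefly, the key point being that ``$l$-isogeneous'' together with ``differs from $V_{\ZZ}$ only at $l$'' combine to let us take $W_{\ZZ}\subseteq V_{\ZZ}$ with $\Gal(\ol L/L)$-invariant $l$-part, which is exactly the normal form needed for the counting argument in the proof of Theorem~\ref{teo5.5}.
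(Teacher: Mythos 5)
Your argument is essentially the paper's: the paper conjugates $(V'_{\ZZ},s',\rho')$ by the rational isomorphism $\alpha$ underlying the $l$-isogeny (so that the Hodge structure and Galois representation become literally $s$ and $\rho$) and then multiplies by $l^{n}$ to land inside $V_{\ZZ}$; your added observation that the $l$-isogeny forces $\alpha V'_{\ZZ_{p}}=V_{\ZZ_{p}}$ for all $p\neq l$ is exactly what makes the index a power of $l$, and Galois stability of $W_{\ZZ_{l}}$ is automatic because $\rho=\alpha\rho'\alpha^{-1}$ preserves $\alpha V'_{\ZZ_{l}}$. One correction: your parenthetical claim that ``intersecting with $V_{\ZZ}$ is an isomorphism when the two lattices agree away from $l$'' is unjustified and false in general, since an isomorphism of $(\infty,l)$-integral structures must be realised by an element of $\GL(V_{\QQ})$ commuting with the Hodge structure and the Galois action, and such an element carrying $W_{\ZZ}\cap V_{\ZZ}$ onto $W_{\ZZ}$ need not exist (for instance when that commutant consists only of scalars and the two lattices are not homothetic, e.g. $V_{\ZZ}=\ZZ^{2}$ and $W_{\ZZ}=\ZZ\oplus l^{-1}\ZZ$). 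Fortunately the step is redundant: once the transported lattice agrees with $V_{\ZZ}$ at every $p\neq l$, multiplication by $l^{n}$ alone (which is central, hence an isomorphism of structures) already places it inside $V_{\ZZ}$ with $l$-power index, so you should simply delete the intersection step and keep the rescaling, which is precisely what the paper does.
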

\begin{proof}
Let $(V'_{\ZZ},s',\rho')$ be an $(\infty,l)$-integral structure $l$-isogeneous to $(V_{\ZZ}, s , \rho)$.
We can find a $\alpha\in  \GL(V_{\QQ})$ such that $s = \alpha s' \alpha^{-1}$
and $\rho = \alpha \rho' \alpha^{-1}$.
The structure  $(V'_{\ZZ},s',\rho')$ is isomorphic to  $(\alpha V'_{\ZZ},s,\rho)$.

For any power $l^n$ of $l$, there is an isomorphism between
 $(\alpha V'_{\ZZ},s,\rho)$ and  $(l^n \alpha V'_{\ZZ},s,\rho)$.
We can choose $n$ such that $l^n \alpha V'_{\ZZ} \subset V_{\ZZ}$ and we let $W_{\ZZ} = l^n \alpha V'_{\ZZ}$.
\end{proof}

Let $(W_{\ZZ}, s, \rho)$ be an $(\infty , l)$-integral structure $l$--isogeneous to $(V_{\ZZ},s,\rho)$
with $W_{\ZZ}\subset V_{\ZZ}$.
We let
$$
T = \{ \alpha U_l \alpha^{-1} \mbox{ where } \alpha \in \GL(V_{\QQ_{l}}), \alpha U_{l} \alpha^{-1} \subset \GL(V_{\ZZ_{l}}) \} 
$$ 
 
\begin{lem}
Let $\alpha\in \GL(V_{\QQ_{l}})$ be such that $\alpha(W_{\ZZ_{l}})=V_{\ZZ_{l}}$.
Then $\alpha U_l \alpha^{-1}\in T$.
\end{lem}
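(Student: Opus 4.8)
The plan is to simply unwind the definitions of $T$ and of an $(\infty,l)$-integral structure, as this lemma is a direct bookkeeping step. First I would record that, since $(W_{\ZZ},s,\rho)$ is an $(\infty,l)$-integral structure, by definition the Galois representation $\rho$ takes values in $\GL(W_{\ZZ_l})$; equivalently, every element of $U_l=\rho(\Gal(\ol L/L))$ preserves the lattice $W_{\ZZ_l}$. (This is the only point that deserves a line of care: one must use that $(W_{\ZZ},s,\rho)$ is an \emph{honest} $(\infty,l)$-integral structure with the same $\rho$, so that $U_l\subset\GL(W_{\ZZ_l})$ and not merely $U_l\subset\GL(W_{\QQ_l})$.)

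Next, using the hypothesis $\alpha(W_{\ZZ_l})=V_{\ZZ_l}$, I would compute, for any $u\in U_l$,
$$
(\alpha u\alpha^{-1})(V_{\ZZ_l}) = \alpha u\alpha^{-1}\alpha(W_{\ZZ_l}) = \alpha\bigl(u(W_{\ZZ_l})\bigr) = \alpha(W_{\ZZ_l}) = V_{\ZZ_l},
$$
so $\alpha u\alpha^{-1}\in\GL(V_{\ZZ_l})$. Since this holds for every $u\in U_l$, we obtain $\alpha U_l\alpha^{-1}\subset\GL(V_{\ZZ_l})$, which is precisely the condition in the definition of $T$; hence $\alpha U_l\alpha^{-1}\in T$. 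There is essentially no obstacle: the statement follows at once from the $U_l$-stability of $W_{\ZZ_l}$ (built into the data of an $(\infty,l)$-integral structure) together with the normalisation $\alpha(W_{\ZZ_l})=V_{\ZZ_l}$.
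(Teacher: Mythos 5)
Your proof is correct and is essentially the paper's own argument: the paper expresses the same computation via a commutative diagram relating $\rho(\sigma)$ on $W_{\ZZ_l}$ to $\alpha\rho(\sigma)\alpha^{-1}$ on $V_{\ZZ_l}$, concluding exactly as you do that $\alpha U_l\alpha^{-1}$ stabilises $V_{\ZZ_l}$ and hence lies in $T$.
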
  
\begin{proof}
We have the following diagram for any $\sigma \in \Gal(\ol L/L)$:

\[
 \xymatrix{
 W_{\ZZ_l} \ar@{->}[rr]^{\displaystyle \alpha\cdot} \ar@{->}[dd]_{\displaystyle \rho(\sigma)}
      && V_{\ZZ_l} \ar@{->}[dd]^{\displaystyle \alpha \rho(\sigma)\alpha^{-1}}    \\ \\
 W_{\ZZ_l} \ar@{->}[rr]_{\displaystyle \alpha\cdot} && V_{\ZZ_l}
 }
\]

It is clear from this diagram that $\alpha U_l \alpha^{-1}$ stabilises $V_{\ZZ_l}$ and therefore
$$
\alpha U_l \alpha^{-1} \subset \GL(V_{\ZZ_l})
$$
\end{proof}

The group $\GL(V_{\ZZ_l})$ acts on the left on $T$.
We now prove the following proposition.

\begin{prop}\label{prop5.8}
The set $T$ is a finite union of $\GL(V_{\ZZ_l})$-orbits.
\end{prop}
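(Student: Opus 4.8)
The statement asserts that $T = \{\alpha U_l\alpha^{-1} : \alpha \in \GL(V_{\QQ_l}),\ \alpha U_l\alpha^{-1}\subset \GL(V_{\ZZ_l})\}$ breaks into finitely many $\GL(V_{\ZZ_l})$-orbits, where $U_l = \rho(\Gal(\ol L/L))$ is open in $M(\QQ_l)$. The plan is to reduce the question to a statement about conjugacy classes of \emph{open} compact subgroups of the $\QQ_l$-points of the reductive group $M$, and then invoke (what I expect the paper states as) Proposition \ref{prop5.8}'s group-theoretic input. First I would observe that every element $\alpha U_l\alpha^{-1}$ of $T$ is an open compact subgroup of $\alpha M(\QQ_l)\alpha^{-1}$, and since $M(\QQ_l)$ is Zariski-dense and $U_l$ is open in it, the Zariski closure of $\alpha U_l\alpha^{-1}$ is $\alpha M\alpha^{-1}$. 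Thus the conjugating element $\alpha$ is determined modulo the normaliser $N_{\GL(V_{\QQ_l})}(M)(\QQ_l)$, and $T$ is indexed, roughly, by $\GL(V_{\ZZ_l})$-conjugacy classes of the subgroups $\alpha U_l\alpha^{-1}$ lying in a single rational conjugacy class of pairs $(\alpha M\alpha^{-1}, \alpha U_l\alpha^{-1})$.

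Next I would separate the problem into two finiteness statements. The first is that the set of lattices $\alpha^{-1}V_{\ZZ_l}$ arising here — equivalently, the set of compact open subgroups of $M(\QQ_l)$ of the form $\beta^{-1}\GL(V_{\ZZ_l})\beta \cap M(\QQ_l)$ that \emph{contain} $U_l$ — gives only finitely many $U_l$-conjugacy (indeed $N_{M(\QQ_l)}(U_l)$-conjugacy) classes; this uses that $U_l$ is open, so any compact subgroup of $M(\QQ_l)$ containing it has index bounded by $[\text{a maximal compact} : U_l]$, and there are finitely many compact subgroups of a $p$-adic group containing a fixed open subgroup (they all lie between $U_l$ and a maximal compact, and there are finitely many maximal compacts up to conjugacy, each containing finitely many subgroups above the given finite-index $U_l$). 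The second, and this is the crux, is a \emph{rigidity} statement: the set of $\GL(V_{\QQ_l})$-conjugates of $M$ that are still defined over $\QQ_l$ and whose intersection with $\GL(V_{\ZZ_l})$ is open (hence contains a conjugate of $U_l$) falls into finitely many $\GL(V_{\ZZ_l})$-orbits. I expect this is exactly the content of the "non-trivial result on conjugacy classes of open subgroups in $p$-adic groups" advertised in the introduction, so in the write-up I would isolate it as a lemma (or cite it as the promised Proposition \ref{prop5.8} itself) and deduce the theorem from it together with the first, elementary finiteness.

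The main obstacle is the second finiteness statement: a priori there is a continuous family of $\GL(V_{\QQ_l})$-conjugates of $M$ (the variety $\GL(V_{\QQ_l})/N_{\GL(V_{\QQ_l})}(M)$ of conjugate subgroups is positive-dimensional in general), and one must show that only finitely many $\GL(V_{\ZZ_l})$-orbits among them are compatible with the integral lattice $V_{\ZZ_l}$ in the strong sense that the conjugate subgroup meets $\GL(V_{\ZZ_l})$ in an open subgroup. The key point I would exploit is that "meeting $\GL(V_{\ZZ_l})$ in an open subgroup" forces the conjugate of $M$ to have a lattice model over $\ZZ_l$ commensurable with $V_{\ZZ_l}$, and such integral models of a fixed reductive group over $\ZZ_l$ are rigid up to $\GL(V_{\ZZ_l})$ (roughly, a Bruhat--Tits / smoothness argument, or Serre's finiteness of the number of $\GL_n(\ZZ_l)$-conjugacy classes of reductive subgroups with bounded discriminant). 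Concretely, I would: (i) show $T$ is stable under $\GL(V_{\ZZ_l})$ and that two elements $\alpha U_l\alpha^{-1}$, $\alpha' U_l\alpha'^{-1}$ lie in the same $\GL(V_{\ZZ_l})$-orbit iff $\alpha'\alpha^{-1}$ lies in $\GL(V_{\ZZ_l})\cdot N_{\GL(V_{\QQ_l})}(U_l^{\mathrm{Zar}})$; (ii) bound the index $[\GL(V_{\ZZ_l}) : \GL(V_{\ZZ_l})\cap \alpha M(\QQ_l)\alpha^{-1}]$ type data uniformly; (iii) apply the integral rigidity input to conclude finitely many orbits. The elementary steps (i)--(ii) are routine bookkeeping with the double-coset description; step (iii) is where the real work — presumably the heart of Section 5.2 — lies.
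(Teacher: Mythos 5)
Your reduction is organised around the right double coset space, but the proof has a genuine gap at exactly the point you yourself identify as the crux, and the key lemma you would rely on is not only unproved but false as you state it. Your step (iii) is either cited circularly (``cite it as the promised Proposition \ref{prop5.8} itself'') or deferred to an unspecified Bruhat--Tits/smoothness or ``bounded discriminant'' finiteness, with no discriminant bound ever established. Moreover the parenthetical inference in (iii) is wrong: for a conjugate $\alpha M\alpha^{-1}$, having \emph{open} intersection with $\GL(V_{\ZZ_l})$ does not imply that this intersection contains a conjugate of the fixed group $U_l$ (any such conjugate has the same Haar measure as $U_l$, whereas the open intersection can be arbitrarily small), and under the openness hypothesis alone the finiteness fails. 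Concretely, let $M$ be the diagonal torus of $\GL_2$ and $T_k$ the conjugate torus with eigenlines $\langle e_1\rangle$ and $\langle e_1+l^k e_2\rangle$; then $T_k(\QQ_l)\cap\GL_2(\ZZ_l)$ is the open subgroup $\{(a,b)\in(\ZZ_l^\times)^2:\ a\equiv b \bmod l^k\}$ of $T_k(\QQ_l)$, the index of this intersection in the maximal compact subgroup of $T_k(\QQ_l)$ is an invariant of the $\GL_2(\ZZ_l)$-orbit, and it grows with $k$; so these conjugates fall into infinitely many $\GL_2(\ZZ_l)$-orbits, even though each meets $\GL_2(\ZZ_l)$ in an open subgroup. (They do not contribute to $T$, precisely because the unit group of $T_k(\QQ_l)$, which is the only candidate for a conjugate of $U_l=(\ZZ_l^\times)^2$ inside it, is not integral for $k\ge 1$.) Thus the finiteness cannot be decoupled from the fixed open subgroup $U_l$, and keeping track of $U_l$ is the whole content of the proposition; that content is absent from your argument. (There is also a smaller slip in your step (i): the correct condition for two points of $T$ to be in the same orbit is $\GL(V_{\ZZ_l})\cap\alpha' N_{\GL(V_{\QQ_l})}(U_l)\alpha^{-1}\neq\emptyset$, and replacing $N(U_l)$ by the normaliser of its Zariski closure only gives a necessary condition.)

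For comparison, the paper proves the needed finiteness directly by a measure-theoretic argument rather than by integral rigidity of reductive subgroups. One sets $\psi_{m}(\alpha)=\int_{U_{l}^m}\prod_{i=1}^m {\bf 1}_{\GL_{n}(\ZZ_{l})}(\alpha t_{i}\alpha^{-1})\, d\mu(t_{1})\dots d\mu(t_{m})$, which equals $1$ whenever $\alpha U_l\alpha^{-1}\subset\GL_n(\ZZ_l)$ and descends to $\GL_{n}(\ZZ_{l})\backslash\GL_{n}(\QQ_{l})/Z_{\GL_n}(M)(\QQ_{l})$. Using that $U_l$ is open in $M(\QQ_l)$ with $M$ reductive and connected, one shows that for $m$ large, almost every $m$-tuple in $U_l^m$ consists of semisimple elements whose common centraliser has identity component $Z_{\GL_n}(M)^0$; properness of the conjugation-orbit map for such tuples forces $\psi_m$ to vanish outside a bounded subset of $\GL_n(\QQ_l)/Z_{\GL_n}(M)(\QQ_l)$, so the set $A=\{\alpha:\ \alpha U_l\alpha^{-1}\subset\GL_n(\ZZ_l)\}$ has finite image in the double quotient, and the surjection $A/Z_{\GL_n}(M)(\QQ_l)\rightarrow T$ yields the finiteness of $\GL(V_{\ZZ_l})\backslash T$. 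If you wish to pursue your lattice-theoretic route instead, the statement you must prove is not your (iii) but rather: the set of $U_l$-stable lattices in $V_{\QQ_l}$ is a finite union of orbits under $Z_{\GL(V_{\QQ_l})}(M)(\QQ_l)=\bigl(\End_{U_l}(V_{\QQ_l})\bigr)^{\times}$ --- a Jordan--Zassenhaus-type finiteness over $\ZZ_l$ for the order generated by $U_l$ --- and that requires its own proof, which your proposal does not supply.
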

\begin{proof}

Choose a basis for $V_{\ZZ_l}$ and identify $\GL(V_{\ZZ_l})$ with $\GL_n(\ZZ_l)$.
Let $\mu$ be the normalized Haar measure on $U_{l}$ and ${\bf 1}_{\GL_n(\ZZ_l)}$ be
the characteristic function of $\GL_{n}(\ZZ_{l})$.
Let $m\ge 1$ be an integer. We define a function $\psi_{m}$ on $\GL_{n}(\QQ_{l})$
as follows 
$$
\psi_{m}(\alpha):=\int_{U_{l}^m}\prod_{i=1}^m {\bf 1}_{\GL_{n}(\ZZ_{l})}(\alpha t_{i}\alpha^{-1})
d\mu(t_{1})\dots d\mu(t_{m}).
$$

We see that if $\alpha U_{l}\alpha^{-1}\subset \GL_n(\ZZ_l)$, then $\psi_{m}(\alpha)=1$.
The function $\psi_{m}$ is well defined on 
$$
      \GL_{n}(\ZZ_{l}) \backslash \GL_{n}(\QQ_{l})/Z_{\GL_n}(M)(\QQ_{l}).
$$

The following lemma is a variant of a lemma that was communicated to us by L. Clozel.

\begin{lem} If $m$ is large enough, 
the function $\psi_{m}(\alpha)$ goes to $0$ when $\alpha$ goes to $\infty$
in $ \GL_{n}(\QQ_{l})/Z_{\GL_n}(M)  (\QQ_{l})$ (for the usual $l$-adic topology).
\end{lem}
\begin{proof}
Let $m$ be an integer. It suffices to prove that if $m$ is
large enough, then for $(t_1,\dots, t_m)$ outside a subset of $U_l^m$ of measure zero, we have

$$
\prod_{i=1}^m {\bf 1}_{\GL_{n}(\ZZ_{l})}(\alpha t_{i}\alpha^{-1})\longrightarrow 0
$$
when $\alpha\rightarrow \infty$ in $ \GL_{n}(\QQ_{l})/Z_{\GL_n}(M)(\QQ_{l})$.

Let us define
$$
W_{m}=\{(t_{1},\dots,t_{m}), \mbox{ $t_{i}\in U_{l}$, } \mbox{ $t_{i}$ semi-simple and } Z_{GL_{n}(\QQ_{l})}(t_{1},\dots,t_{m})^0
=Z_{\GL_n}(M)^0\}
$$

For $t=(t_{1},\dots,t_{m})\in W_{m}$, we have a morphism
$$
\pi_{t}:   \GL_{n}(\QQ_{l})/Z_{\GL_n}(M)(\QQ_{l}) \rightarrow O(t)
$$
where $O(t):=\{(x t_{1}x^{-1},\dots, x t_{m}x^{-1}), x\in \GL_{n}(\QQ_{l})\}$
is the orbit of $t$ in $\GL_{n}(\QQ_{l})^m$ under the action of
$\GL_{n}(\QQ_{l})$ by conjugation.

The fibres of $\pi_{t}$ are in bijection with 
$ Z_{GL_{n}(\QQ_{l})}(t_{1},\dots,t_{m})/Z_{\GL_n}(M)(\QQ_{l})$
and, by definion of $(t_1,\dots , t_m)$, are finite.

As $t$ is semi-simple,
this orbit is closed and the map $\pi_{t}$ is proper. Therefore,
for $\alpha$ outside a bounded and closed subset of 
$ \GL_n(\QQ_l)/Z_{\GL_n}(M)(\QQ_{l})$, we have

$$
\prod_{i=1}^m {\bf 1}_{\GL_n(\ZZ_{l})}(\alpha t_{i}\alpha^{-1})=0.
$$

Therefore, the lemma is a consequence of the following statement.

\begin{lem}\label{lem4}
For $m$ large enough, the set $W_{m}$ has measure $1$ in $U_l^m$.
\end{lem}

We start by showing the following.

\begin{lem}\label{lem5}
Let $H$ be a $\QQ_l$-algebraic subgroup of $\GL_{n,\QQ_{l}}$. 
Assume that $Z_{\GL_n}(M)\subset H$ and that $Z_{\GL_n}(M)^0\neq H^0$. 

Then the set
$\{t\in U_l, \ \ H^0\subset Z_{\GL_{n,\QQ_{l}}} (t)\}$ has $\mu$-mesure $0$.
\end{lem}

\begin{proof} 
Let $\Theta\subset U_{l}$ be a subset of positive measure.
Let $\Theta'$ be the subgroup of $U_{l}$ generated by $\Theta$ and
$\Theta''$ the closure of $\Theta'$ for the analytic topology. 
Then $\Theta''$ is a compact subgroup of $U_{l}$ of positive measure.
Then $\Theta''$ is an open compact subgroup of $U_l$.  

An element $g\in \GL_n(\QQ_l)$
that centralizes $\Theta$, also centralizes $\Theta''$. 
Hence we need to prove that if $U'_l \subset M(\QQ_l)$ is a compact open subgroup,
then $Z_{\GL_{n,\QQ_l}}(U'_l)^0 = Z_{\GL_n}(M)^0$.

We first remark that  $Z_{\GL_{n,\QQ_l}}(U'_l)$ is contained in $N_M^0$ 
(neutral componant of the normaliser of $M$). If $s\in  Z_{\GL_{n,\QQ_l}}(U'_l)$
but $s\notin N_{M}$, then $M_{s}:=M\cap sMs^{-1}$ is a subgroup of $M$
such that $\mbox{dim}(M_{s})<\mbox{dim}(M)$ (notice that $M$ is connected).
But $M_{s}(\QQ_{l})$ contains the open compact subgroup $U'_{l}$ of $M(\QQ_{l})$.
One can check that this gives a contradiction.

We have the following decomposition as almost direct products:
$$
N_M^0 =M^{der}Z_{\GL_n}(M).
$$
We have the following exact sequence
$$
1 \lto Z_{\GL_n}(M)^0(\QQ_l) \lto N_M(^0\QQ_l) \lto M^{\rm ad}(\QQ_l)
$$
An element of $Z_{\GL_{n,\QQ_l}}(U'_l)$ goes to the centraliser of a compact open subgroup
$M^{\rm ad}(\QQ_l)$ and is therefore trivial.
This shows that $Z_{\GL_{n,\QQ_l}}(U'_l) = Z_{\GL_n}(M)$.
\end{proof}

We define the subset $\Theta_{m}$ of $U_{l}^m$ to be the 
set of the $(t_{1},\dots,t_{m})\in U_{l}^m$ such that 
the first coordinate $t_{1}$ is an arbitrary element in $U_{l}$.

If $Z_{\GL_{n,\QQ_{l}}}(t_{1})^0=Z_{\GL_n}(M)^0$, then $t_{2}$ is an arbitrary element of $U_{l}$.
If $Z_{\GL_{n,\QQ_{l}}}(t_{1})^0\neq Z_{\GL_n}(M)^0$ we require that
$Z_{\GL_{n,\QQ_l}}(t_{1})^0$ is not contained in $Z_{\GL_{n,\QQ_l}}(t_{2})^0$. 
By the above lemma $t_{2}$ is in a subset of $U_l$ of $\mu$-mesure $1$.
On the other hand
$$
Z_{\GL_{n,\QQ_l}}(t_{1},t_{2})^0 \subsetneqq Z_{\GL_{n,\QQ_l}}(t_{1})^0. 
$$
By repeating the process we construct the required set $\Theta_m$ as follows.
Let $(t_{1},\dots,t_{r})$
be the first $r$ coordinates of an element of $\Theta_{m}$. 
If $Z_{\GL_{n,\QQ_l}}(t_{1},\dots,t_{r})^0=Z_{\GL_n}(M)^0$, 
 $t_{r+1}$ is arbitrary in $U_{l}$. 
If $Z_{\GL_{n,\QQ_l}}(t_{1},\dots,t_{r})^0\neq Z_{\GL_n}(M)^0$, 
we impose that 
$Z_{\GL_{n,\QQ_l}}(t_{1},\dots,t_{r})^0$ is not in $Z_{\GL_{n,\QQ_l}}(t_{r+1})^0$.
The element $t_{r+1}$ is in a subset of $U_l$ of $\mu$-mesure $1$.

By construction of $\Theta_{m}$, for all $m$,
$\Theta_{m}$ is of mesure $1$ and for $m$ large enough
the elements of $\Theta_{m}$ satisfy 
$$
Z_{\GL_{n,\QQ_l}}(t_{1},t_{2},\dots,t_{m})^0=Z_{\GL_n}(M)^0
$$

As $M$ is reductive and connected, the set of semi-simple elements
is open in $M$ and the set $U_l^{ss}$ of semi-simple elements of
$U_l$ is of measure $1$. We deduce that for $m$ large enough,
$\Theta_{m}\cap (U_{l}^{ss})^m$ is of measure $1$ and is
contained in $W_m$. This finishes the proof of the lemma.
\end{proof}

Let $m$ be large enough so that the conclusion of the above lemma holds.
The set
$$
A:=\{\alpha\in \GL_{n}(\QQ_{l}),\   \alpha U_{l}\alpha^{-1}\subset \GL_{n}(\ZZ_{l})\}
$$
has a bounded image in $\GL_{n}(\QQ_{l})/   Z_{\GL_n}(M)(\QQ_{l})$
(note that ${\psi_{m}}_{\vert A}=1$). We deduce that 
$$
\GL_{n}(\ZZ_{l})\backslash A/  Z_{\GL_n}(M)(\QQ_{l})
$$
is a finite set. There is a surjection
$ A/ Z_{\GL_n}(M)(\QQ_{l})    \rightarrow T$. The finiteness
of $\GL_{n}(\ZZ_l)\backslash T$ follows.
\end{proof}

Let $\{ \theta_{1} U_{l} \theta_{1}^{-1}, \dots ,  \theta_{r} U_{l} \theta_{r}^{-1}\}$ be a set of representatives for $\GL(V_{\ZZ_{l}})$-orbits in $T$.

Let $\alpha \in \GL(V_{\QQ_{l}})$ such that $\alpha W_{\ZZ_{l}} = V_{\ZZ_{l}}$.
Multiplying $\alpha$ on the left by an element of $\GL(V_{\ZZ_{l}})$ does not affect this condition, therefore we may assume that
there exists a $i\in \{ 1,\dots , r\}$ such that 
$$
\alpha U_{l} \alpha^{-1} = \theta_{i} U_{l} \theta_{i}^{-1}
$$

In what follows we study the set of $(\infty, l)$-integral structures $(W_{\ZZ},s,\rho)$, $l$-isogeneous to $(V_{\ZZ}, s, \rho)$
with $W_{\ZZ}\subset V_{\ZZ}$ such that the set
$$
T_{\theta} = \{ \alpha \in \GL(V_{\QQ_{l}}), \alpha W_{\ZZ_{l}} =V_{\ZZ_{l}} \mbox{ and }   \alpha U_{l} \alpha^{-1} = \theta U_{l} \theta^{-1}  \}
$$
is non-empty (for a fixed $\theta\in \GL(V_{\ZZ_{l}})$ among the set of $\theta_i$).

Let us define the set 
$$
S = \{ \alpha \in \GL(V_{\QQ_{l}}), \alpha W_{\ZZ_{l}} =V_{\ZZ_{l}} \mbox{ and }   \alpha U_{l} \alpha^{-1} = U_{l}  \}
$$

\begin{lem}
Let $Z := Z_{\GL(V_{\QQ_{l}})}(M)$.
The set
$$
U_{l}\backslash S / Z(\QQ_{l})
$$
is finite.
\end{lem}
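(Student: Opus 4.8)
The plan is to compare $U_{l}\backslash S/Z(\QQ_{l})$ with the finite set appearing in the proof of Proposition~\ref{prop5.8}. Recall from there the set $A=\{\alpha\in\GL(V_{\QQ_{l}}):\alpha U_{l}\alpha^{-1}\subset\GL(V_{\ZZ_{l}})\}$, together with the fact that $\GL(V_{\ZZ_{l}})\backslash A/Z(\QQ_{l})$ is finite (here $Z=Z_{\GL(V_{\QQ_{l}})}(M)$, as in the lemma). Since $U_{l}=\rho(\Gal(\oL/L))\subset\GL(V_{\ZZ_{l}})$ by the very definition of an $(\infty,l)$-integral structure, every $\alpha\in S$ has $\alpha U_{l}\alpha^{-1}=U_{l}\subset\GL(V_{\ZZ_{l}})$, so $S\subset A$, and inclusion induces a map
$$
f\colon U_{l}\backslash S/Z(\QQ_{l})\lto \GL(V_{\ZZ_{l}})\backslash A/Z(\QQ_{l}).
$$
As the target is finite, it suffices to bound the fibres of $f$ uniformly.

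\medskip

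Let $\alpha\in S$, and suppose $\beta\in S$ has the same image under $f$, say $\beta=k\alpha z$ with $k\in\GL(V_{\ZZ_{l}})$ and $z\in Z(\QQ_{l})$. Since $z$ centralises $M$ and hence $U_{l}$, we get $U_{l}=\beta U_{l}\beta^{-1}=k(\alpha U_{l}\alpha^{-1})k^{-1}=kU_{l}k^{-1}$, so $k$ lies in $N:=N_{\GL(V_{\ZZ_{l}})}(U_{l})$. Thus the $f$-fibre over the class of $\alpha$ is contained in the image of $N\alpha Z(\QQ_{l})$. Now $\alpha$ normalises $U_{l}$, which is Zariski dense in the connected group $M$, so $\alpha$ normalises $M$ and therefore $Z$; hence $\alpha Z(\QQ_{l})=Z(\QQ_{l})\alpha$, and right translation by $\alpha$ puts the image of $N\alpha Z(\QQ_{l})$ in bijection with $U_{l}\backslash N/Z_{\ZZ}$, where $Z_{\ZZ}:=N\cap Z(\QQ_{l})$. (Note that $Z(\QQ_{l})$ centralises $U_{l}$, so $Z_{\ZZ}=\GL(V_{\ZZ_{l}})\cap Z(\QQ_{l})=Z_{\GL(V_{\ZZ_{l}})}(U_{l})$ is precisely the kernel of the conjugation map $N\to\mathrm{Aut}(U_{l})$.) Moreover every element of $N$ normalises both $U_{l}$ and $Z_{\ZZ}$, and $U_{l}$ commutes with $Z_{\ZZ}$, so $U_{l}\backslash N/Z_{\ZZ}=(U_{l}Z_{\ZZ})\backslash N$. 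Everything therefore reduces to proving $[N:U_{l}Z_{\ZZ}]<\infty$.

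\medskip

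This last point is the heart of the matter, and the reason it is not immediate is that $U_{l}$ is open in $M(\QQ_{l})$ but \emph{not} in $\GL(V_{\ZZ_{l}})$: when $M$ has positive-dimensional centre, $\mathrm{Out}(U_{l})$ is infinite, so compactness of $N$ alone is not enough. The remedy is that $N\subset N_{\GL(V_{\QQ_{l}})}(M)(\QQ_{l})$. Set $Q:=N_{\GL(V_{\QQ_{l}})}(M)/Z$; by the structure of the normaliser of a reductive subgroup (recalled in the proof of Lemma~\ref{lem5}: $N_{\GL(V_{\QQ_{l}})}(M)^{0}=M^{\der}\cdot Z$) the identity component of $Q$ is $Q^{0}=M^{\ad}$ and $Q/Q^{0}$ is finite, so $[Q(\QQ_{l}):M^{\ad}(\QQ_{l})]<\infty$. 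The quotient homomorphism $N_{\GL(V_{\QQ_{l}})}(M)\to Q$ has kernel $Z$, hence induces an injection $\bar N:=N/Z_{\ZZ}\hookrightarrow Q(\QQ_{l})$ with compact image, and the image $\bar U_{l}$ of $U_{l}$ is open in $M^{\ad}(\QQ_{l})$ because the projection $M(\QQ_{l})\to M^{\ad}(\QQ_{l})$ is open and $U_{l}$ is open in $M(\QQ_{l})$. Thus $\bar U_{l}$ is an open subgroup of the compact group $\bar N\cap M^{\ad}(\QQ_{l})$, hence of finite index there, and $\bar N\cap M^{\ad}(\QQ_{l})$ has finite index in $\bar N$; therefore $[\bar N:\bar U_{l}]=[N:U_{l}Z_{\ZZ}]<\infty$, which is what we wanted. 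The only genuinely nontrivial ingredients are Proposition~\ref{prop5.8} and this final group-theoretic step, whose decisive feature is that the potentially infinite outer-automorphism action on $U_{l}$ is already realised inside the algebraic group $M\cdot Z$ and is thus controlled by the finiteness of $\pi_{0}$ of $N_{\GL(V_{\QQ_{l}})}(M)$.
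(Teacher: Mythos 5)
Your argument is correct, but it is organised quite differently from the paper's. The paper proves this lemma without any reference to Proposition \ref{prop5.8}: it first enlarges $S$ to $S'=\{\alpha:\alpha U_l\alpha^{-1}=U_l\}$ (discarding the lattice condition altogether), observes $S'\subset N_{\GL(V_{\QQ_l})}(M)(\QQ_l)$ by Zariski density of $U_l$ in $M$, reduces to $S''=S'\cap N_{\GL}(M)^0(\QQ_l)$, and then uses the exact sequence $1\to Z(\QQ_l)\to N_{\GL}(M)^0(\QQ_l)\to M^{\ad}(\QQ_l)$ together with the fact that the normaliser of a compact open subgroup of the $\QQ_l$-points of a semisimple group is compact open, to conclude $S''=Z(\QQ_l)V_l$ with $V_l$ compact open containing $U_l$; finiteness is then immediate. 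You instead keep the integral data, map $U_l\backslash S/Z(\QQ_l)$ to the finite set $\GL(V_{\ZZ_l})\backslash A/Z(\QQ_l)$ extracted from the proof of Proposition \ref{prop5.8} (legitimate here: that finiteness is established earlier and independently, so there is no circularity, though you are quoting an intermediate step of a proof rather than the stated proposition), and then bound the fibres uniformly by $[N_{\GL(V_{\ZZ_l})}(U_l):U_lZ_{\ZZ}]$, proving this index finite via compactness of $N_{\GL(V_{\ZZ_l})}(U_l)$, the decomposition $N_{\GL}(M)^0=M^{\der}Z$, finiteness of the component group of $N_{\GL}(M)$, and openness of the image of $U_l$ in $M^{\ad}(\QQ_l)$. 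The trade-off: the paper's route is self-contained and shorter, at the price of invoking the semisimple-normaliser fact; yours avoids that fact (compactness comes for free from the lattice) but leans on the heavier analytic input behind Proposition \ref{prop5.8}, and your final group-theoretic step is in the end a cousin of the paper's adjoint-quotient argument. Both approaches use the standing hypothesis of Theorem \ref{teo5.5} (openness of $U_l$ in $M(\QQ_l)$) only through the Zariski density of $U_l$ in $M$ and the openness of $U_l^{\ad}$, so your proof is valid in exactly the same generality; I found no gap in it.
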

\begin{proof}

Consider
$$
S' = \{ \alpha\in \GL_n(\QQ_l), \alpha U_l \alpha^{-1} = U_l \}
$$
Clearly, it suffices to prove that
$$
U_l \backslash S' / Z(\QQ_l)
$$
is finite.
Note that $S'$ is contained in 
$N_{\GL_n}(M)(\QQ_l)$ (the normaliser of $M$).
As we are only interested in finiteness, we just have to prove that 
$$
U_l \backslash S'' / Z(\QQ_l)
$$
for  the subset $S''=S'\cap  N_{\GL_n}(M)^0(\QQ_l)$ of $S$.
Write 
$$
N = N_{\GL_n}(M)^0 = M Z=M^{der}Z'
$$
We have an exact sequence
$$
1 \lto Z'(\QQ_l) \lto N(\QQ_l) \lto M^\ad(\QQ_l)
$$
Let $U_{l}^{ad}$ be the image of $U_{l}$ in  $M^{ad}(\QQ_{l})$.
The elements of $S''$ go to the normaliser of $U_l^\ad$ in $M^\ad(\QQ_l)$.
The normaliser of a compact open subgroup  the $\QQ_{l}$-points of
a semi-simple group  is a compact open subgroup.
It follows that
$$
S'' = Z(\QQ_l) V_l
$$
where $V_l$ is a compact open subgroup of $M(\QQ_l)$ containing $U_l$.
The result follows.
\end{proof}

\bigskip

Let us write the finite decomposition
$$
S = \coprod_{i} U_{l} t_{i} Z(\QQ_{l})
$$
We fix $i$ and let $t = t_{i}$.
Note that there is a bijection
$$
T_{\theta}\lto S \  \  \  \alpha\mapsto \theta^{-1}\alpha
$$
Thus, we may assume that the $(\infty,l)$-integral structures $(W_{\ZZ},s,\rho)$ $l$-isogeneous to $(V_{\ZZ}, s, \rho)$
are equipped with an $\alpha \in T_{\theta}$ such that
$$
\alpha W_{\ZZ_{l}} = V_{\ZZ_{l}} \mbox{ and } \theta^{-1}\alpha = u t z
$$
where $u\in U_l$ and $z \in Z(\QQ_l)$.
We rewrite this as
$$
\theta^{-1} (\theta u^{-1} \theta^{-1}) = t z
$$
Note that the element $\theta u^{-1} \theta^{-1}$ is in $\alpha U_{l} \alpha^{-1}$ and 
fixes $\alpha W_{\ZZ_{l}}$.
Therefore, we can assume that there is an $\alpha \in T_{\theta}$ such that
$$
\alpha W_{\ZZ} = V_{\ZZ} \mbox{ and } \theta^{-1} \alpha = t z
$$

\bigskip 

We can now start the proof of the theorem \ref{teo5.5}.
Consider a sequence $(W_{n,\ZZ}, s,\rho)$ of  $(\infty, l)$-integral structures isogeneous to $(V_{\ZZ}, s, \rho)$. 
Recall that our aim is to prove that this sequence is finite up to isomorphism.

By what preceeds, we see that without loss of generality, we can assume that there
 exists an $\alpha_{n}\in \GL(V_{\QQ_{l}})$ such that
$$
\alpha_{n} W_{n,\ZZ_{l}} = V_{\ZZ_{l}} \mbox{ and } \theta^{-1} \alpha_{n} = t z_{n}
$$
for some $z_n \in Z(\QQ_{l})$.
For every pair $n, n'$, we let 
$$
\beta_{n,n'} := \alpha_{n'}^{-1}\alpha_n = z_{n'}^{-1} z_n \in Z(\QQ_l)
$$
We have $\beta_{n,n'} W_{n,\ZZ_{l}} = W_{n',\ZZ_{l}}$.
Note that the fact that $\beta_{n',n}\in Z(\QQ_l)$ 
shows that $\beta_{n,n'}$ induces an isomorphism of Galois representations
between $(W_{n,\ZZ_l}, \rho)$ and $(W_{n',\ZZ_l}, \rho)$.

Fix $n_0$ and for every $n$, consider 
$$
\beta_n := \beta_{n,n_{0}}
$$
so that $\beta_{n} W_{n,\ZZ_{l}} = W_{n_0,\ZZ_{l}}$.

Consider the open compact subgroup of $Z(\QQ_{l})$ defined by
$$
H_{0} = \GL(W_{n_{0},\ZZ_{l}}) \cap Z(\QQ_{l}).
$$
We have a finite decomposition
$$
Z(\QQ_{l}) = \coprod_{i} H_{0} \gamma_{i} Z(\QQ)
$$
for some elements $\gamma_{i}\in Z(\QQ_{l})$.
Without loss of generality we can assume that for all $n$,
$$
\beta_{n} = h_{n} \gamma z_{n}
$$
where $\gamma \in Z(\QQ_{l})$ is fixed and $h_{n}\in H_{0}, z_n \in Z(\QQ)$.
As 
$$
H_0 = \GL(W_{n_0,\ZZ_l})\cap Z(\QQ_l),
$$
we see that we can replace $\beta_n$ with $h_n^{-1}\beta_n$
and therefore assume that
$$
\beta_n = \gamma z_n
$$
where $z_n \in Z(\QQ)$.

We find that for any pair $(n,n')$,
$$
\lambda_{n,n'} = \beta_{n'}^{-1}\beta_n
$$
is an element of $Z(\QQ)$ and satisfies 
$$
\lambda_{n,n'} W_{n,\ZZ_l} = W_{n',\ZZ_l}
$$

As $\lambda_{n,n'}$ is in $Z(\QQ) \cap Z(\QQ_l)$ (intersection inside $Z(\AAA_f)$), we see that $\lambda_{n,n'}(W_{n,\ZZ}) = W_{n',\ZZ}$, therefore $\lambda_{n,n'}$
is an isomorphism of  $(\infty, l)$-integral structures.

\section*{Appendix.}

In this section we prove the following theorem which
implies that making the assumption SV5 of \cite{Milne}
does not cause any loss of generality.
The proof is extracted from the unpublished notes by Moonen \cite{Momo}.

\begin{lem}
Let $(G,X)$ be a Shimura datum such that $G$ is the generic Mumford-Tate
group on $X$. Then $Z_{G}(\QQ)$ is discrete in $Z_{G}(\AAA_f)$.
\end{lem}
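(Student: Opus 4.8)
I want to show that $Z_G(\QQ)$ is discrete in $Z_G(\AAA_f)$, under the hypothesis that $G = MT(X)$. The key structural input is that the generic Mumford-Tate hypothesis forces a compactness condition on the connected centre $Z_G^0$. Concretely, the first step is to recall (this is essentially Deligne's axiom SV5 together with the analysis in Moonen's notes) that for a Shimura datum with $G = MT(X)$, the torus $Z_G^0$ satisfies: the largest $\RR$-split subtorus of $Z_G^0$ that is defined over $\QQ$ is trivial, equivalently $Z_G^0 / (\text{its maximal $\QQ$-split subtorus})$ has no nontrivial $\QQ$-split part; in fact the relevant statement is that $Z_G^0(\RR)$ contains a maximal compact subgroup $K_\infty$ such that $Z_G^0(\RR)/K_\infty$ is isomorphic to the real points of a $\QQ$-split torus, and the genericity of the Mumford-Tate group kills any such split piece inside $Z_G^0$ beyond the central torus $w(\GG_m)$ coming from the weight. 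The cleanest route: write $Z_G^0 = S \cdot T$ as an almost direct product where $S$ is the maximal $\QQ$-split subtorus and $T$ is such that $T(\RR)$ is compact; then show genericity forces $S$ to be (at most) one-dimensional, coming from the weight cocharacter, and in fact $S=1$ here since $G = MT(X)$ means there is no room for a $\QQ$-split central torus acting non-trivially.

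\emph{Reduction to tori.} Since $Z_G(\QQ) \subset Z_G^0(\QQ) \cdot (\text{finite})$ — more precisely $Z_G^0$ has finite index issues only through the component group, which is finite — it suffices to prove $Z_G^0(\QQ)$ is discrete in $Z_G^0(\AAA_f)$. So I am reduced to a statement purely about a $\QQ$-torus $Z = Z_G^0$: I must show $Z(\QQ)$ is discrete in $Z(\AAA_f)$. The general fact here is: for a $\QQ$-torus $Z$, $Z(\QQ)$ is discrete in $Z(\AAA_f)$ if and only if $Z(\RR)$ is \emph{not} compact-by-anisotropic in the wrong way — precisely, $Z(\QQ)$ is discrete in $Z(\AAA_f)$ iff the maximal $\QQ$-split subtorus of $Z$ is trivial, OR one uses that $Z(\QQ)$ is always discrete in $Z(\AAA) = Z(\RR) \times Z(\AAA_f)$ and then one needs $Z(\RR)$ to be compact to conclude discreteness in $Z(\AAA_f)$ alone. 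This last reformulation is the crux: $Z(\QQ)$ is a discrete subgroup of the adele group $Z(\AAA)$ (standard), so $Z(\QQ)$ is discrete in $Z(\AAA_f)$ as soon as $Z(\RR)$ is compact. Hence the whole lemma reduces to: \textbf{$Z_G^0(\RR)$ is compact.}

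\emph{The heart of the matter.} So the main obstacle — and the only genuinely Shimura-theoretic point — is proving $Z_G^0(\RR)$ is compact when $G = MT(X)$. Here is the argument I would give. Let $s \in X$, so $s \colon \SSS \to G_\RR$ and $MT(s)_\RR \supseteq s(\SSS)$. The connected centre $Z_G^0$ is defined over $\QQ$, and $s(\SSS)$ centralizes $Z_{G}^0$ (it lies in $G$, and $Z_G^0$ is central). Decompose $Z_G^0$ over $\QQ$ as an almost direct product $Z_G^0 = Z_s \cdot Z_c$ where $Z_s$ is the maximal $\QQ$-split subtorus and $Z_c$ is the maximal subtorus with $Z_c(\RR)$ compact (possible for any $\QQ$-torus up to isogeny). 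Suppose $Z_s \neq 1$. Then $Z_s$ is a nontrivial $\QQ$-split central torus, and I claim $G' := Z_s^{\perp}$-complement, i.e. the subgroup generated by $G^{\der}$, $Z_c$, and the image of the weight, is a proper $\QQ$-subgroup of $G$ through which every $s \in X$ factors — contradicting $G = MT(X)$ — \emph{unless} the weight cocharacter $w_X \colon \GG_{m,\RR} \to G_\RR$ already surjects onto the $\RR$-points of $Z_s$. But $w_X$ is defined over $\QQ$ (it is part of the Shimura datum for $G = MT(X)$: the weight is rational), and its image is a $\QQ$-split torus that is at most one-dimensional. If $Z_s$ has dimension $\geq 2$, or if $Z_s$ is one-dimensional but not equal to the image of $w_X$, we again get that $s$ factors through a proper $\QQ$-subgroup, contradiction. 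So $Z_s = w_X(\GG_m)$, one-dimensional. Finally I rule this out as well: since we are in the setting where the excerpt's Proposition uses $Z_G^0(\QQ)$ discrete and the hypothesis is just $G=MT(X)$, the remaining one-dimensional split central torus is handled by the observation that $X$ being a $G(\RR)$-conjugacy class of \emph{Hodge} structures forces the weight to be defined over $\QQ$ but $w_X(\GG_m)$, being central and $\QQ$-split, would itself be a proper $\QQ$-subgroup containing no $s(\SSS)$ — this cannot happen since $s(\SSS)$ projects nontrivially to it. Reworking: the correct final step is that a $\QQ$-split central torus $Z_s$ on which $s(\SSS)$ acts (through the weight) still centralizes $s$, so $s$ factors through $Z_{G}(Z_s)^0 \cdot (\text{something})$; one checks directly using the explicit form of $\SSS \to G_\RR$ and rationality of the cocharacters $\mu$ and $w$ that $MT(s) \subseteq$ the proper subgroup $\langle G^{\der}, Z_c, \text{im}(w)\rangle$ whenever $Z_s \supsetneq \text{im}(w)$, and that $\text{im}(w)$ itself being a $\QQ$-split torus contributes nothing non-compact that is not already forced. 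Therefore $Z_s$ is at most the image of $w$; and since in a Shimura datum the weight morphism composed with $X$ is such that $Z_G^0(\RR)$ modulo this piece is compact, while the piece $\text{im}(w)(\RR) \cong \RR^{>0}$ — here one invokes that for $G = MT(X)$ the full $Z_G^0$ actually has compact real points because axiom SV5 is \emph{equivalent} to $G = MT(X)$ (this is precisely Moonen's observation cited before the lemma), so in fact $Z_s = 1$ outright. With $Z_G^0(\RR)$ compact, discreteness of $Z_G(\QQ)$ in $Z_G(\AAA_f)$ follows from discreteness of $Z_G(\QQ)$ in the full adele group. The hard part is thus the torus-theoretic genericity argument isolating $Z_s = 1$; everything after that is formal.
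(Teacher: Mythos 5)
There is a genuine gap, and it sits exactly where you locate "the heart of the matter." Your reduction to the claim that $Z_G^0(\RR)$ is compact cannot work, because that claim is false under the hypothesis $G=MT(X)$: for the Siegel datum $(\GSp_{2g},\HH_g^{\pm})$ one has $G=MT(X)$ and $Z_G=\GG_m$, whose real points $\RR^{\times}$ are not compact, yet $\GG_m(\QQ)=\QQ^{\times}$ is discrete in $\AAA_f^{\times}$. Relatedly, the decomposition you invoke, "$Z_G^0=Z_s\cdot Z_c$ with $Z_s$ the maximal $\QQ$-split subtorus and $Z_c(\RR)$ compact, possible for any $\QQ$-torus up to isogeny," is false for general $\QQ$-tori (the $\QQ$-anisotropic part need not have compact real points; e.g.\ the norm-one torus of a real quadratic field): for the centre of $G$ this decomposition is essentially \emph{equivalent} to the discreteness you are trying to prove, so assuming it begs the question. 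Finally, the step where you conclude "$Z_s=1$ outright" by citing that SV5 is equivalent to $G=MT(X)$ is circular — that implication is precisely the lemma — and in any case SV5 does not force $Z_s=1$; it allows the one-dimensional $\QQ$-split piece coming from the weight, which is exactly what survives in the Siegel example. Your genericity discussion around the weight cocharacter never produces a reason why the non-split part of $Z_G^0$ has compact real points, and that is the actual content needed.

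The missing idea, which is how the paper argues, is to use a polarization/Cartan involution rather than genericity of the split part. Take a Hodge generic $h\in X$, a faithful representation $G\hookrightarrow \GL(V)$, and the Hodge group $H=Hg(h)$ (the smallest $\QQ$-subgroup through which $h|_{\UUU_1}$ factors), so that $G=MT(h)$ is, up to the weight factor $\GG_m$, equal to $H$. Set $C=h(i)\in H(\RR)$; then $\sigma=\mathrm{ad}(C)$ is a Cartan involution of $H_\RR$, i.e.\ the twisted form $H^{\sigma}$ has compact real points. Since $\mathrm{ad}(C)$ acts trivially on the connected centre $Z_H^0$, one gets $(Z_H^0)^{\sigma}(\RR)=Z_H^0(\RR)$, hence $Z_H^0(\RR)$ is compact and $Z_H^0(\QQ)$ is discrete in $Z_H^0(\AAA_f)$. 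Up to isogeny $Z_G^0$ sits inside $w(\GG_m)\cdot Z_H^0$, and $\GG_m(\QQ)$ is discrete in $\GG_m(\AAA_f)$, so $Z_G(\QQ)$ is discrete in $Z_G(\AAA_f)$. In short: the correct conclusion is "compact modulo the $\QQ$-split weight torus," not "compact," and the tool that delivers it is the Cartan involution $\mathrm{ad}(h(i))$ being trivial on the centre — an input your proposal never uses.
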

\begin{proof}
By assumption, $G$ is the Mumford-Tate group of 
a point $h$ of $X$.
Choose a faithful rational representation $G\hookrightarrow GL(V)$ of $G$.
Composed with $h$, it gives a pure rational Hodge structure on $V$
and and its Mumford-Tate group $M$ is a subgroup of $\GL(V)$.

Let $\UUU_1$ be the subgroup of $\SSS$ consisting of elements of norm one.
We define the Hodge group $H:=Hg(h)$ to be the 
smallest subgroup  of $\GL(V)$ such that 
the restriction of $h$ to $\UUU_{1}$ factorises through 
$H_\RR$.
The Mumford-Tate group is the semi-direct product of $\GG_{m\QQ}$ and 
$Hg(h)$ if the weight is not zero and $M= Hg(h)$ if the weight is zero.
Let $C = h(i)$, of course, $C$ is contained in $Hg(h)(\RR)$ and $\sigma := ad(C)$ defines an 
involution of $Hg(h)_{\RR}$ (note that $C^2 = -id$) and we have an inner form
$Hg(h)^\sigma$ which has the property that $Hg(h)^{\sigma}(\RR)$ is compact
and $Hg(h)$ is a reductive $\QQ$-group. 
We write
$$
Hg(h) = Z_H^0 H_1 \cdots H_r
$$ 
where the $H_i$ are the almost simple factors of $Hg(h)^{der}$.

We note that for each factor in this decomposition (including $Z_H^0$), $\sigma = ad(C)$ defines a compact form over $\RR$.
On the other hand we have
$(Z_H^0)^{\sigma}(\RR) = Z_H^0(\RR)$ (because $(Z_H^0)^{\sigma}(\RR) = \{x\in Z_H(\CC) : \ol{x} = C^{-1} x C = x\} = Z_H^0(\RR)$),
therefore $Z_H^0(\RR)$ is compact and hence $Z_H^0(\QQ)$ is discrete in $Z_H^0(\AAA_f)$.
It is also true that $\GG_{m}(\QQ)$ is discrete in $\GG_m(\AAA_f)$. We conclude that $M(\QQ)$ is discrete in
$M(\AAA_f)$.  
\end{proof}

\end{document}